\documentclass[a4paper]{amsart}

\usepackage{amsmath,amssymb,amsthm}
\usepackage{fullpage}
\usepackage{hyperref}
\usepackage{mathtools}
\usepackage{graphicx}
\usepackage{xypic}
\entrymodifiers={+!!<0pt,\fontdimen22\textfont2>}
\usepackage[all]{xy}
\usepackage{tikz}
\usetikzlibrary{arrows,decorations.markings}

\newtheoremstyle{myremark} % name
    {7pt}                    % Space above
    {7pt}                    % Space below
    {}  	                 % Body font
    {}                           % Indent amount
    {\bf}       	         % Theorem head font
    {.}                          % Punctuation after theorem head
    {.5em}                       % Space after theorem head
    {}  % Theorem head spec (can be left empty, meaning ‘normal’)

\theoremstyle{plain}
\newtheorem{lemma}{Lemma}[section]
\newtheorem{theorem}[lemma]{Theorem}
\newtheorem{corollary}[lemma]{Corollary}
\newtheorem{proposition}[lemma]{Proposition}
\theoremstyle{definition}
\newtheorem{definition}[lemma]{Definition}
\newtheorem*{theorem-main}{Theorem~\ref{thm:ellipse-homotopy}}
\newtheorem*{theorem-secondary}{Theorem~\ref{thm:exactly-m-orbits}}
\theoremstyle{myremark}
\newtheorem{remark}[lemma]{Remark}

%%%%%%%%%%%%%%%%%%%% Some fonts
\newcommand{\N}{\mathbb{N}}
\newcommand{\R}{\mathbb{R}}
\newcommand{\Z}{\mathbb{Z}}
\newcommand{\F}{\mathbb{F}}

%%%%%%%%%%%%%%%%%%%% Topology
\newcommand{\redhom}{\widetilde{H}}

\newcommand{\cl}{\mathrm{Cl}}
\newcommand{\colim}{\mathrm{colim}}
\newcommand{\hocolim}{\mathrm{hocolim}}

%%%%%%%%%%%%%%%%%%% Vietoris--Rips and cyclic graphs
\newcommand{\diredge}{\rightarrow}
\newcommand{\vrless}[2]{\mathrm{VR}_<(#1;#2)}
\newcommand{\vrleq}[2]{\mathrm{VR}_\leq(#1;#2)}
\newcommand{\vr}[2]{\mathrm{VR}(#1;#2)}
\newcommand{\vrcless}[2]{\mathbf{VR}_<(#1;#2)}
\newcommand{\vrcleq}[2]{\mathbf{VR}_\leq(#1;#2)}
\newcommand{\vrc}[2]{\mathbf{VR}(#1;#2)}
\newcommand{\wf}{\mathrm{wf}}
\newcommand{\cnk}[2]{C_{#1}^{#2}}
\newcommand{\ddist}{\vec{d}}
\newcommand{\olen}{\ell}
\newcommand{\owind}{\omega}
\newcommand{\numorb}{P}
\newcommand{\hit}{\mathcal{H}}
\newcommand{\fin}{\mathrm{Fin}}
\newcommand{\tfin}{\widetilde{\mathrm{Fin}}}

%%%%%%%%%%%%%%%%%%%%% Other notation

\newcommand{\rank}{\mathrm{rank}}
\newcommand{\nullity}{\mathrm{nullity}}
\newcommand{\md}[1]{\ \mathrm{mod}\ #1}

\begin{document}
\title{On Vietoris--Rips complexes of ellipses}
\author{Micha{\l} Adamaszek}
\address{Department of Mathematical Sciences, University of Copenhagen, Universitetsparken 5, 2100 Copenhagen, Denmark}
\email{aszek@mimuw.edu.pl}
\author{Henry Adams}
\address{Department of Mathematics, Colorado State University, Fort Collins, CO 80523, United States}
\email{adams@math.colostate.edu}
\author{Samadwara Reddy}
\address{Department of Mathematics, Duke University, Durham, NC 27708, United States}
\email{samadwara.reddy@duke.edu}
\thanks{MA was supported by VILLUM FONDEN through the network for Experimental Mathematics in Number Theory, Operator Algebras, and Topology. HA was supported in part by Duke University and by the Institute for Mathematics and its Applications with funds provided by the National Science Foundation. The research of MA and HA was supported through the program ``Research in Pairs" by the Mathematisches Forschungsinstitut Oberwolfach in 2015. SR was supported by a PRUV Fellowship at Duke University.}
\subjclass[2010]{05E45, 55U10, 68R05}
\keywords{Vietoris--Rips complex, ellipses, homotopy, clique complex, persistent homology.}

\begin{abstract}
For $X$ a metric space and $r > 0$ a scale parameter, the Vietoris--Rips simplicial complex $\vrcless{X}{r}$ (resp.\ $\vrcleq{X}{r}$) has $X$ as its vertex set, and a finite subset $\sigma\subseteq X$ as a simplex whenever the diameter of $\sigma$ is less than $r$ (resp.\ at most $r$). Though Vietoris--Rips complexes have been studied at small choices of scale by Hausmann and Latschev \cite{Hausmann1995,Latschev2001}, they are not well-understood at larger scale parameters. In this paper we investigate the homotopy types of Vietoris--Rips complexes of ellipses $Y=\{(x,y)\in \R^2~|~(x/a)^2+y^2=1\}$ of small eccentricity, meaning $1 < a \le \sqrt{2}$. Indeed, we show there are constants $r_1 < r_2$ such that for all $r_1 < r < r_2$, we have $\vrcless{Y}{r}\simeq S^2$ and $\vrcleq{Y}{r}\simeq \bigvee^5 S^2$, though only one of the two-spheres in $\vrcleq{Y}{r}$ is \emph{persistent}. Furthermore, we show that for any scale parameter $r_1 < r < r_2$, there are arbitrarily dense subsets of the ellipse such that the Vietoris--Rips complex of the subset is not homotopy equivalent to the Vietoris--Rips complex of the entire ellipse. As our main tool we link these homotopy types to the structure of infinite cyclic graphs.
\end{abstract}
\maketitle

\section{Introduction}

Let $X$ be a metric space and let $r>0$ be a scale parameter. The Vietoris--Rips simplicial complex $\vrcless{X}{r}$ (resp.\ $\vrcleq{X}{r}$) has $X$ as its vertex set and has a finite subset $\sigma\subseteq X$ as a simplex whenever the diameter of $\sigma$ is less than $r$ (resp.\ at most $r$). When we do not want to distinguish between $<$ and $\leq$, we use the notation $\vrc{X}{r}$ to denote either complex. Vietoris--Rips complexes have been used to define a homology theory for metric spaces \cite{Vietoris27}, in geometric group theory \cite{Gromov1987}, and more recently in applied and computational topology \cite{EdelsbrunnerHarer,Carlsson2009}.

In applications of topology to data analysis \cite{Carlsson2009}, metric space $X$ is often a finite dataset sampled from an unknown underlying space $M$, and one would like to use $X$ to recover information about $M$. A common technique is to use the complex $\vrc{X}{r}$ as an approximation for $M$. The idea of \emph{persistent homology} is to track the homology of $\vrc{X}{r}$ as the scale $r$ increases, to trust homological features which persist over a long range of scale parameters as representing true features of $M$, and to disregard short-lived homological features as noise. There are several theoretical results which justify this approach:
\begin{itemize}
\item[(i)] If $M$ is a Riemannian manifold and scale parameter $r$ is sufficiently small, then Hausmann proves $\vrcless{M}{r}$ is homotopy equivalent to $M$ \cite[Theorem~3.5]{Hausmann1995}.
\item[(ii)] If $M$ is a Riemannian manifold, scale parameter $r$ is sufficiently small, and metric space $X$ is sufficiently close to $M$ in the Gromov-Hausdorff distance (for example if $X\subseteq M$ is sufficiently dense), then Latschev proves $\vrcless{X}{r}$ is homotopy equivalent to $M$ \cite[Theorem~1.1]{Latschev2001}.
\item[(iii)] If $M\subseteq\R^d$ is compact with sufficiently large weak feature size and finite set $X\subseteq \R^d$ is sufficiently close to $M$ in the Hausdorff distance, then the homology of $M$ can be recovered from the persistent homology of $\vrc{X}{r}$ as the scale $r$ varies over sufficiently small scale parameters \cite[Theorem~3.6]{ChazalOudot2008}.
\item[(iv)] If $X_1,X_2,X_3,\ldots$ are a sequence of noisy samplings near a metric space $M$ that converges to $M$ in the Gromov-Hasudorff distance as $n\to\infty$, then the persistent homology of $\vrc{X_n}{-}$ converges to a limiting object, namely the persistent homology of $\vrc{M}{-}$. This is called the \emph{stability} of persistent homology \cite{chazal2009gromov,ChazalDeSilvaOudot2013}. 
\end{itemize}
Properties (i)-(iii) rely on sufficiently small scale parameters $r$; however, since space $M$ is unknown, one does not typically know what values of $r$ are small enough to satisfy the above theorems. Instead, practitioners allow $r$ to increase from zero to a value which is often larger than the ``sufficiently small" hypotheses in theorems (i)-(iii) above. Hence Vietoris--Rips complexes with large scale parameters frequently arise in applications of persistent homology, even though little is known about their behavior.

Property (iv), by contrast, holds for all scale parameters $r$. It implies that if $X_n$ is a uniformly random sample of $n$ points from a compact manifold $M$, then the sequence of persistent homology diagrams for $\vrc{X_n}{-}$ converges as $n\to\infty$ to a limiting object, the persistent homology of $\vrc{M}{-}$. However, very little is known about the limiting object of this convergent sequence.\footnote{By analogy, it is as if we have a Cauchy sequence in a complete metric space---we know that the Cauchy sequence converges to some limit point, but we don't know the limit point.} For many manifolds $M$ it is not known at what scale $r$ the homotopy equivalence $\vrc{M}{r}\simeq M$ (guaranteed by (i) for small $r$) ends, or what homotopy type occurs next. In particular, a data analyst should not immediately conclude that their data has a two-dimensional hole if its Vietoris--Rips complex has a two-dimensional persistence interval; as we will show, any sufficiently dense sampling of the ellipse also has a two-dimensional persistent homology interval of nonzero length. The inferences one can make about $M$ from estimates of the persistent homology of $\vr{M}{r}$ would be aided if we knew the persistent homology of $\vr{M}{r}$ for a wider classes of shapes $M$. Examples are needed to aid the development of a more general theory, and in this paper we develop tools to address the example of the ellipse.

To our knowledge the circle is the only\footnote{And easy consequences thereof, such as $n$-dimensional tori equipped with the $L^\infty$ metric \cite[Section~10]{AA-VRS1}.} connected non-contractible manifold $M$ for which the persistent homology of $\vrc{M}{-}$ is known for all homological dimensions and at all scales $r$. In \cite{AA-VRS1} it is shown that if $S^1$ is the circle, then $\vrcless{S^1}{r}$ obtains the homotopy types $S^1$, $S^3$, $S^5$, $S^7$, \ldots as $r$ increases, until finally it is contractible. This example agrees with a conjecture of Hausmann \cite[(3.12)]{Hausmann1995} that for $M$ a compact Riemannian manifold, the connectivity of $\vrcless{M}{r}$ is a non-decreasing function of $r$.

We study Vietoris--Rips complexes of ellipses equipped with the Euclidean metric. We restrict attention to ellipses $Y$ of small eccentricity, meaning $Y=\{(x,y)\in \R^2~|~(x/a)^2+y^2=1\}$ with semi-major axis length $a$ satisfying $1<a\le\sqrt{2}$. The threshhold $a\le\sqrt{2}$ guarantees that the intersection of $Y$ with any Euclidean ball centered at a point in $Y$ is connected (Lemma~\ref{lem:d_p-extrema}). Our main result is the following.

\vspace{2mm}
\noindent {\bf Main Theorem (Theorem~\ref{thm:ellipse-homotopy})} Let $Y$ be an ellipse of small eccentricity, and let $r_1=\frac{4\sqrt{3}a}{a^2+3}$ and $r_2=\frac{4\sqrt{3}a^2}{3a^2+1}$. Then
\[
\vrcless{Y}{r}\simeq \begin{cases}
S^1&\mbox{for }0<r\le r_1\\
S^2&\mbox{for }r_1<r\le r_2
\end{cases}
\ \mbox{and}\ 
\vrcleq{Y}{r}\simeq \begin{cases}
S^1&\mbox{for }0<r<r_1\\
S^2&\mbox{for }r=r_1\\
\bigvee^5 S^2&\mbox{for }r_1<r<r_2\\
\bigvee^3 S^2&\mbox{for }r=r_2.
\end{cases}
\]
Furthermore,
\begin{itemize}
\item For $0<r<\tilde{r}\le r_1$ or $r_1<r<\tilde{r}\le r_2$, inclusion $\vrcless{Y}{r}\hookrightarrow\vrcless{Y}{\tilde{r}}$ is a homotopy equivalence.
\item For $0<r<\tilde{r}< r_1$, inclusion  $\vrcleq{Y}{r}\hookrightarrow\vrcleq{Y}{\tilde{r}}$ is a homotopy equivalence.
\item For $r_1\le r<\tilde{r}\le r_2$, inclusion $\vrcleq{Y}{r}\hookrightarrow\vrcleq{Y}{\tilde{r}}$ induces a rank 1 map on 2-dimensional homology $H_2(-;\F)$ for any field $\F$.
\end{itemize}
\vspace{2mm}

\begin{figure}[h]
\begin{center}
\includegraphics[scale=0.8]{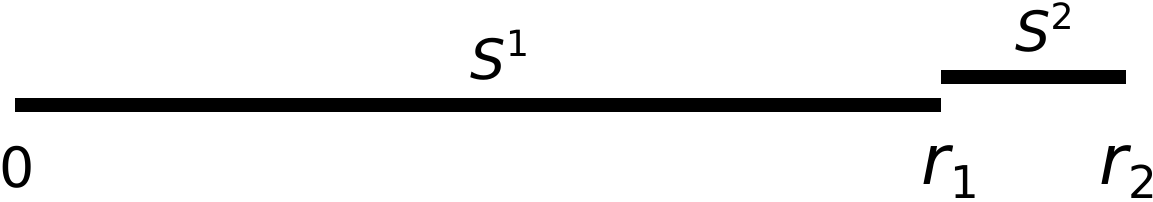} 
\vskip5mm
\includegraphics[scale=0.8]{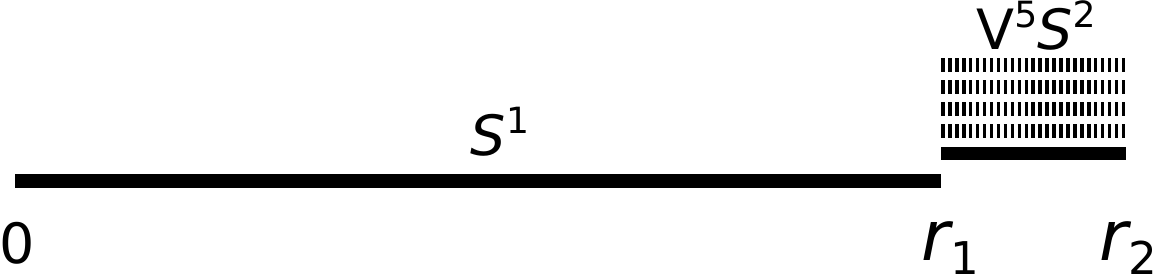}
\end{center}
\caption{The homotopy types of $\vrcless{Y}{r}$ (top) and $\vrcleq{Y}{r}$ (bottom), and also the persistent homology in homological dimensions 1 and 2. See Theorem~\ref{thm:ellipse-homotopy} and Corollary~\ref{cor:ellipse-persistent-homology}.}
\label{fig:PH}
\end{figure}

As a corollary (Corollary~\ref{cor:ellipse-persistent-homology}) we obtain the persistent homology of $\vrc{Y}{r}$ over the interval $[0,r_2)$; see Figure~\ref{fig:PH}. Even though $\dim(H_2(\vrcleq{Y}{r}))=5$ for all $r_1<r<r_2$, note that for all $r_1<r<\tilde{r}<r_2$ we have 
\begin{align*} \rank(H_2(\vrcleq{Y}{r};\F)&\to H_2(\vrcleq{Y}{\tilde{r}};\F))=1\quad\mbox{which implies}\\
\nullity(H_2(\vrcleq{Y}{r};\F)&\to H_2(\vrcleq{Y}{\tilde{r}};\F))=4.
\end{align*}
So the persistent homology of $\vrcleq{Y}{r}$ has an \emph{ephemeral summand} (see \cite{ChazalCrawley-BoeveyDeSilva}) of dimension four over $(r_1,r_2)$.

We believe the main theorem and its corollary are important for the following reason. The theoretical guarantees in (i)-(iii) on recovering information about $M$ from a finite sampling $X$ rely on scale $r$ being ``sufficiently small" depending on $M$, but since $M$ is unknown so is this bound on scale parameters. The parameter-independent stability guarantee in (iv) says that if a sequence of finite samplings $\{X_n\}$ converges $X_n\to M$, then the persistent homology of $\vrc{X_n}{-}$ converges to that of $\vrc{M}{-}$, but very little is known about the behavior of $\vrc{M}{-}$ at larger scales. Suppose for example, that from the Vietoris--Rips complex on a finite dataset $X$, one obtains persistent homology intervals like the one shown in Figure~\ref{fig:sphereOrEllipse}(top), with a single long 1-dimensional interval, and then later a single long 2-dimensional interval. This could be the persistent homology of a dataset $X$ which is sampled from a closed curve wrapping around a sphere in $\R^3$, for example as in Figure~\ref{fig:sphereOrEllipse}(bottom left). Note that as $r$ increases, $\vrc{X}{r}$ will first be homotopy equivalent to a circle $S^1$, and then later to a 2-sphere $S^2$. However, our main theorem shows that another possibility is that dataset $X$ is sampled from an ellipse in $\R^2$, as shown in Figure~\ref{fig:sphereOrEllipse}(bottom right). This is a cautionary tale for data analysts: \emph{persistent} higher-dimensional homology features appearing from Vietoris--Rips complexes are not necessarily evidence that one's dataset is of similarly high dimension.

\begin{figure}[h]
\begin{center}
\includegraphics[scale=0.8]{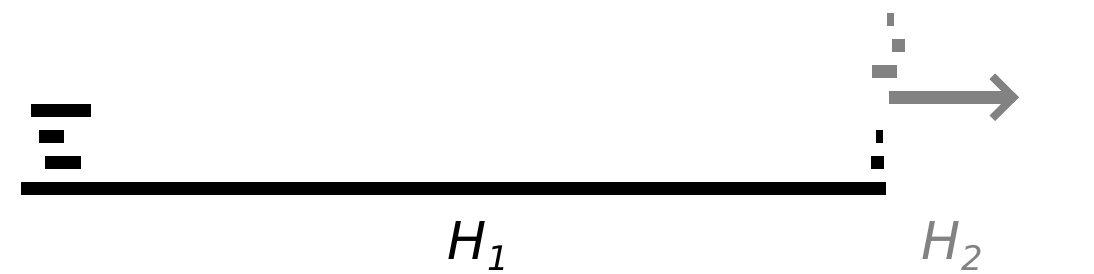} 
\vskip5mm
\includegraphics[scale=0.4]{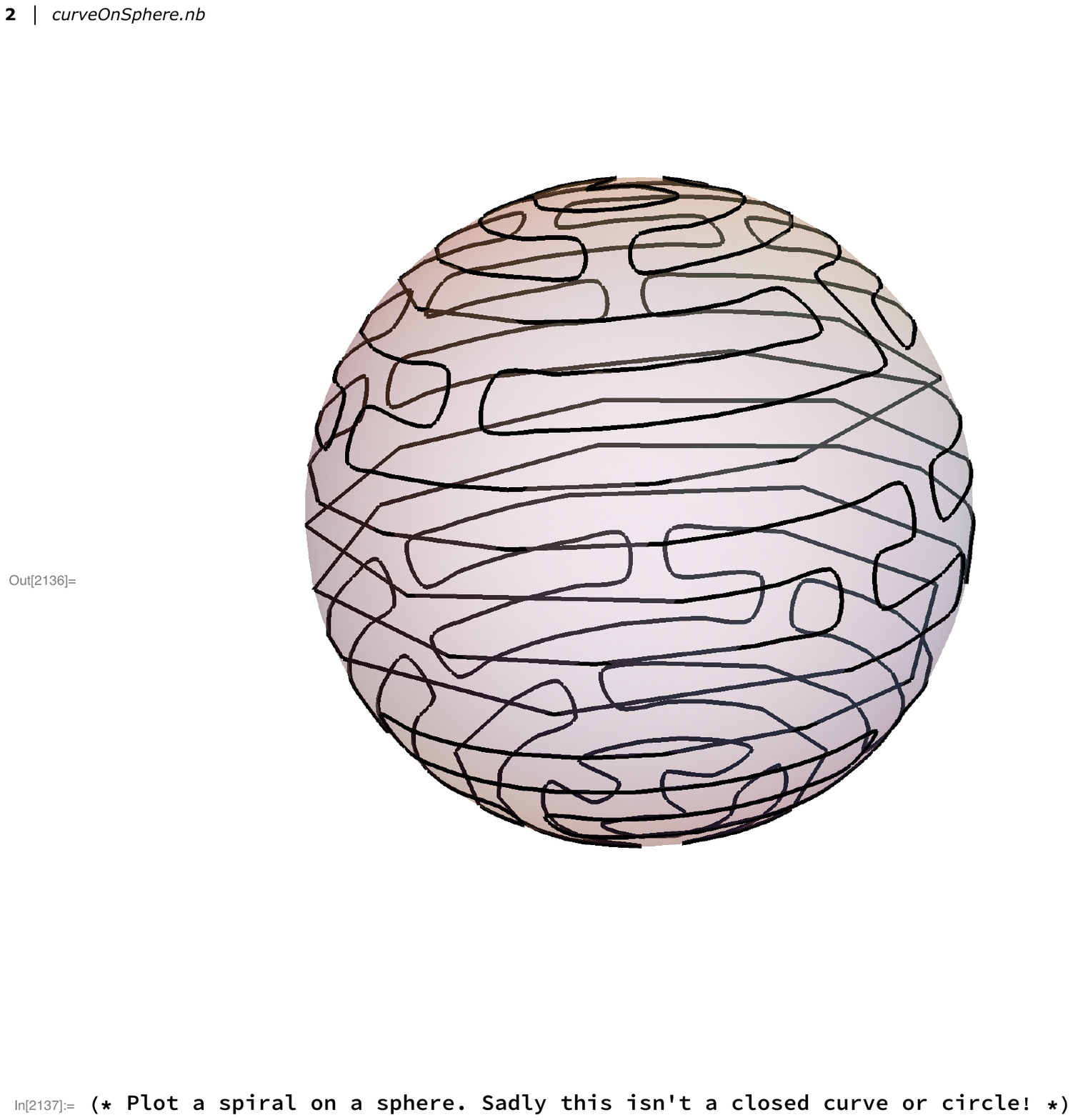}
\hskip5mm
\includegraphics[scale=0.18]{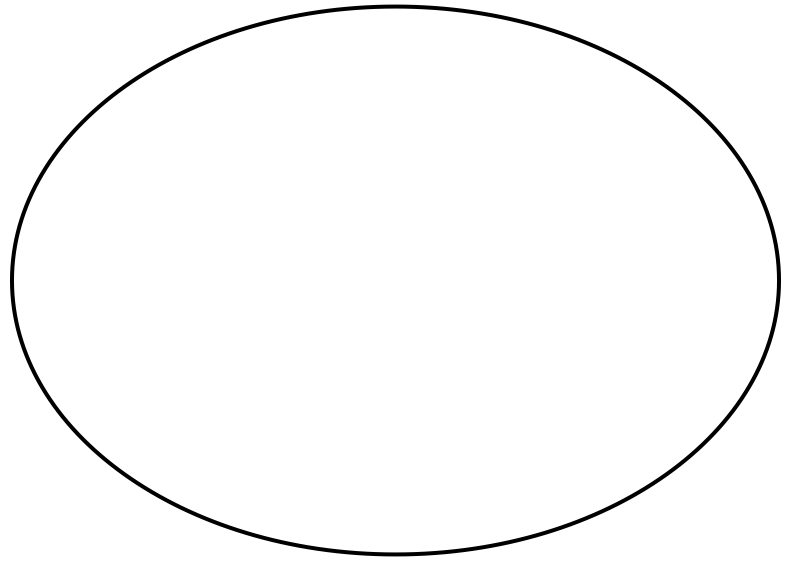}
\end{center}
\caption{The persistent homology of $\vrc{X}{r}$ (top), with 1-dimensional homology in black and 2-dimensional homology in gray, where $X$ is a finite dataset sampled from an underlying space $M$ that could have been either a closed curve on the sphere (bottom left) or from an ellipse (bottom right).}
\label{fig:sphereOrEllipse}
\end{figure}

We also study the behavior of finite samplings from an ellipse; these results appear in the third author's bachelor's thesis \cite{Reddy}. In the case of the circle, the Vietoris--Rips complex of any sufficiently dense sample of the circle recovers the homotopy type of $\vrcless{S^1}{r}$ \cite[Section~5]{AA-VRS1}. This shows that in the particular case when the Riemannian manifold is the circle, an analogue of Latschev's Theorem as stated in (ii) is true not only for small scale parameters $r$ but also for arbitrary scale parameters. However, this fails wildly for the ellipse (which is not a Riemannian manifold when equipped with the Euclidean metric).

\vspace{2mm}
\noindent {\bf Secondary Theorem (Theorem~\ref{thm:exactly-m-orbits})} Let $Y$ be an ellipse of small eccentricity. For any $r_1 < r < r_2$, $\epsilon > 0$, and $m \geq 1$, there exists an $\epsilon$-dense finite subset $X \subseteq Y$ so that $\vrc{X}{r}\simeq\bigvee^m S^2$.
\vspace{2mm}

\noindent Hence even for an arbitrarily dense subset $X$ of the ellipse $Y$, the homotopy types of $\vrc{X}{r}$ and $\vrc{Y}{r}$ need not agree.

In Section~\ref{sec:preliminaries} we provide preliminary concepts and notation on graphs, Vietoris--Rips complexes, and topological spaces. We review finite cyclic graphs and dynamical systems from \cite{AA-VRS1,AAM} in Sections~\ref{sec:finite} and \ref{sec:finite-dynamics}. In Section~\ref{sec:infinite} we generalize this theory to infinite cyclic graphs.  Necessary geometric lemmas about ellipses appear in Section~\ref{sec:geometric-lemmas}. Finally, in Section~\ref{sec:VR} we use the theory of infinite cyclic graphs to prove our results about the homotopy types and persistent homology of Vietoris--Rips complexes of small eccentricity ellipses.

\section{Preliminaries}\label{sec:preliminaries}

We refer the reader to Hatcher \cite{Hatcher} and Kozlov \cite{Kozlov} for basic concepts in topology and combinatorial topology.

\subsection*{Directed graphs}
A \emph{directed graph} is a pair $G=(V,E)$ with $V$ the set of vertices and $E\subseteq V\times V$ the set of directed edges, where we require that there are no self-loops and that no edges are oriented in both directions. The edge $(v,w)$ will also be denoted by $v\diredge w$. A \emph{homomorphism of directed graphs} $h\colon G\to \widetilde{G}$ is a vertex map such that for every edge $v\to w$ in $G$, either $h(v)=h(w)$ or there is an edge $h(v)\to h(w)$ in $\widetilde{G}$. For a vertex $v\in V$ we define the out- and in-neighborhoods
\[ N^+(G,v)=\{w~|~v\diredge w\}, \quad N^-(G,v)=\{w~|~w\diredge v\}, \]
as well as their respective closed versions
\[ N^+[G,v]=N^+(G,v)\cup\{v\}, \quad N^-[G,v]=N^-(G,v)\cup\{v\}. \]

For $W\subseteq V$ we let $G[W]$ denote the induced subgraph on the vertex set $W$. An undirected graph is a graph in which the orientations on the edges are omitted. For an undirected graph $G$ the \emph{clique complex} $\cl(G)$ is the simplicial complex with vertex set $V(G)$ and with faces determined by all cliques (complete subgraphs) of $G$. The clique complex of a directed graph is the clique complex of the underlying undirected graph obtained by forgetting orientations.

\subsection*{Topological spaces} If $X$ and $Z$ are homotopy equivalent topological spaces, then we write $X\simeq Z$. The $i$-fold wedge sum of a space $X$ with itself is denoted $\bigvee^i X$; if $X$ is a nice space such as a connected complex then the choice of basepoint doesn't affect the homotopy type of the wedge sum. The closure of a set $Z\subseteq X$ is denoted $\overline{Z}$. We do not distinguish between an abstract simplicial complex $X$ and its geometric realization, and we let $\redhom_i(X;G)$ denote its $i$-dimensional reduced homology with coefficients in the abelian group $G$.

\subsection*{Vietoris--Rips complexes}
The Vietoris--Rips complex is used to measure the shape of a metric space at a specific choice of scale \cite{EdelsbrunnerHarer,ChazalDeSilvaOudot2013}. 
\begin{definition}
For $X$ a metric space and $r>0$, the \emph{Vietoris--Rips} complex $\vrcless{X}{r}$ (resp.\ $\vrcleq{X}{r}$) is the simplicial complex with vertex set $X$, where a finite subset $\sigma\subseteq X$ is a face if and only if the diameter of $\sigma$ is less than $r$ (resp.\ at most $r$).
\end{definition}

A Vietoris--Rips complex is the \emph{clique} or \emph{flag} complex of its $1$-skeleton. We will omit the subscript and write $\vrc{X}{r}$ in statements which are true for a consistent choice of either subscript $<$ or $\leq$. We denote the $1$-skeleton of a Vietoris--Rips complex by $\vr{X}{r}$.

\subsection*{Conventions for the circle}
Let $S^1$ be the circle of unit circumference equipped with the arc-length metric. If $p_1,p_2,\ldots,p_s\in S^1$ then we write
\[ p_1\preceq p_2\preceq\cdots\preceq p_s\preceq p_1 \]
to denote that $p_1,\ldots,p_s$ appear on $S^1$ in this clockwise order, allowing equality. We may replace $p_i\preceq p_{i+1}$ with $p_i\prec p_{i+1}$ if in addition $p_i\neq p_{i+1}$. For $p,q\in S^1$ we denote the clockwise distance from $p$ to $q$ by $\ddist(p,q)$, and for $p\neq q$ we denote the closed clockwise arc from $p$ to $q$ by $[p,q]_{S^1}=\{z\in S^1~|~p \preceq z \preceq q \preceq p\}$. Open and half-open arcs are defined similarly and denoted $(p,q)_{S_1}$, $(p,q]_{S_1}$, or $[p,q)_{S_1}$.

For a fixed choice of $0\in S^1$ each point $x\in S^1$ can be identified with the real number $\ddist(0,x)\in[0,1)$, and this will be our coordinate system on $S^1$.

\section{Cyclic graphs}\label{sec:finite}

In this section we recall finite cyclic graphs as studied in \cite{Adamaszek2013,AA-VRS1,AAFPP-J}, and we also extend the definitions to include possibly infinite cyclic graphs. Our motivation is that if $Y$ is an ellipse of small eccentricity and $X\subseteq Y$, then $\vr{X}{r}$ is a cyclic graph.

The following definition generalizes \cite[Definition~3.1]{AA-VRS1} to the possibly infinite case.

\begin{definition}
A directed graph with vertex set $V\subseteq S^1$ is \emph{cyclic} if whenever there is a directed edge $v\diredge u$, then there are directed edges $v\diredge w$ and $w\diredge u$ for all $v\prec w\prec u\prec v$.
\end{definition}

\begin{figure*}[h]
    \begin{center}
 \begin{tikzpicture}[
     decoration={
       markings,
       mark=at position 1 with {\arrow[scale=2,black]{latex}};
     },every node/.style={draw,circle}
   ]
\def \n {5}
\def \radius {3cm}
\def \margin {8} % margin in angles, depends on the radius
\foreach \s in {0,...,5}
{\draw (-60*\s: 1.7cm) node(\s){\s};}
    \draw [postaction=decorate] (0) -- (1);
    \draw [postaction=decorate] (0) -- (2);
    \draw [postaction=decorate] (1) -- (2);
    \draw [postaction=decorate] (1) -- (3);
    \draw [postaction=decorate] (1) -- (4);
    \draw [postaction=decorate] (2) -- (3);
    \draw [postaction=decorate] (2) -- (4);
    \draw [postaction=decorate] (3) -- (4);
    \draw [postaction=decorate] (4) -- (5);
    \draw [postaction=decorate] (5) -- (0);
\end{tikzpicture}
\hspace{1mm}
 \begin{tikzpicture}[
     decoration={
       markings,
       mark=at position 1 with {\arrow[scale=2,black]{latex}};
     },every node/.style={draw,circle}
   ]
\def \n {5}
\def \radius {3cm}
\def \margin {8} % margin in angles, depends on the radius
\foreach \s in {0,...,7}
{\draw (-45*\s: 1.7cm) node(\s){\s};}
    \draw [postaction=decorate] (0) -- (1);
    \draw [postaction=decorate] (0) -- (2);
    \draw [postaction=decorate] (1) -- (2);
    \draw [postaction=decorate] (1) -- (3);
    \draw [postaction=decorate] (1) -- (4);
    \draw [postaction=decorate] (2) -- (3);
    \draw [postaction=decorate] (2) -- (4);
    \draw [postaction=decorate] (2) -- (5);
    \draw [postaction=decorate] (3) -- (4);
    \draw [postaction=decorate] (3) -- (5);
    \draw [postaction=decorate] (3) -- (6);
    \draw [postaction=decorate] (4) -- (5);
    \draw [postaction=decorate] (4) -- (6);
    \draw [postaction=decorate] (5) -- (6);
    \draw [postaction=decorate] (5) -- (7);
    \draw [postaction=decorate] (5) -- (0);
    \draw [postaction=decorate] (6) -- (7);
    \draw [postaction=decorate] (6) -- (0);
    \draw [postaction=decorate] (6) -- (1);
    \draw [postaction=decorate] (7) -- (0);
    \draw [postaction=decorate] (7) -- (1);
\end{tikzpicture}
\hspace{1mm}
\begin{tikzpicture}[
     decoration={
       markings,
       mark=at position 1 with {\arrow[scale=1.5,black]{latex}};
     },every node/.style={draw,circle}
   ]
\def \n {5}
\def \radius {3cm}
\def \margin {8} % margin in angles, depends on the radius
\foreach \s in {0,...,8}
{\draw (-40*\s: 1.7cm) node(\s){\s};}
    \draw [postaction=decorate] (0) -- (1);
    \draw [postaction=decorate] (1) -- (2);
    \draw [postaction=decorate] (2) -- (3);
    \draw [postaction=decorate] (3) -- (4);
    \draw [postaction=decorate] (4) -- (5);
    \draw [postaction=decorate] (5) -- (6);
    \draw [postaction=decorate] (6) -- (7);
    \draw [postaction=decorate] (7) -- (8);
    \draw [postaction=decorate] (8) -- (0);
    \draw [postaction=decorate] (0) -- (2);
    \draw [postaction=decorate] (1) -- (3);
    \draw [postaction=decorate] (2) -- (4);
    \draw [postaction=decorate] (3) -- (5);
    \draw [postaction=decorate] (4) -- (6);
    \draw [postaction=decorate] (5) -- (7);
    \draw [postaction=decorate] (6) -- (8);
    \draw [postaction=decorate] (7) -- (0);
    \draw [postaction=decorate] (8) -- (1);
    \draw [postaction=decorate] (0) -- (3);
    \draw [postaction=decorate] (1) -- (4);
    \draw [postaction=decorate] (2) -- (5);
    \draw [postaction=decorate] (3) -- (6);
    \draw [postaction=decorate] (4) -- (7);
    \draw [postaction=decorate] (5) -- (8);
    \draw [postaction=decorate] (6) -- (0);
    \draw [postaction=decorate] (7) -- (1);
    \draw [postaction=decorate] (8) -- (2);
\end{tikzpicture}
    \end{center}
    \caption{(Left) A cyclic graph of winding fraction $\tfrac{1}{4}$; dominated vertex 1 is fast (Definition~\ref{def:fast-slow}) and 3 is slow. (Middle) A cyclic graph of winding fraction $\frac{1}{3}$; dominated vertex 3 is fast and 7 is slow. (Right) The regular cyclic graph $\cnk{9}{3}$.}
    \label{fig:cyclic}
\end{figure*}
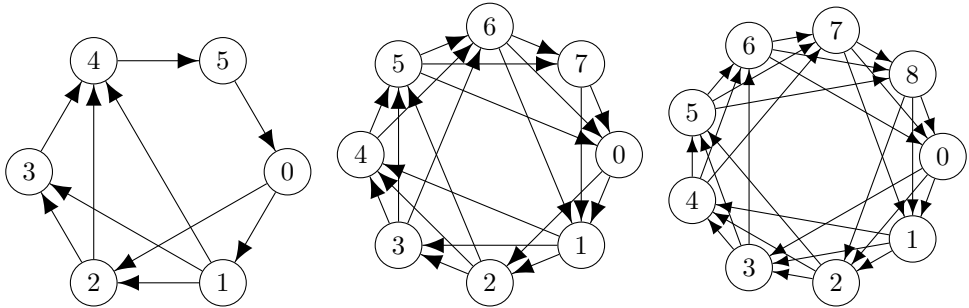

The simplest examples of cyclic graphs are the finite regular cyclic graphs.

\begin{definition}
For integers $n$ and $k$ with $0 \leq k < \frac{1}{2} n$, the cyclic graph $\cnk{n}{k}$ has vertex set $\{0,\ldots,n-1\}$ and edges $i \diredge (i+s) \mod n$ for all $i = 0, \ldots, n-1$ and $s = 1, \ldots, k$.
\end{definition}

We also generalize cyclic graphs homomorphisms (Definition~3.5 of \cite{AA-VRS1}) to the possibly infinite case.

\begin{definition}
Let $G$ and $\widetilde{G}$ be cyclic graphs. A homomorphism $h:G\to \widetilde{G}$ of directed graphs is \emph{cyclic} if
\begin{itemize}
\item whenever $v\prec w\prec u\prec v$ in $G$, then $h(v)\preceq h(w)\preceq h(u)\preceq h(v)$ in $\widetilde{G}$, and
\item $h$ is not constant whenever $G$ has a directed cycle of edges.
\end{itemize}
\end{definition}

An important numerical invariant of a cyclic graph is the \emph{winding fraction}, given for the finite case in \cite[Definition~3.7]{AA-VRS1}.

\begin{definition}\label{def:windingfraction}
The \emph{winding fraction} of a finite cyclic graph $G$ is
\[ \wf(G) = \sup\Big\{\frac{k}{n}~:~\mbox{there exists a cyclic homomorphism }\cnk{n}{k}\to G \Big\}. \]
\end{definition}

If $G$ is a cyclic graph on vertex set $V\subseteq S^1$, and if $W\subseteq V$ is a vertex subset, then note $G[W]$ is also a cyclic graph.

\begin{definition}\label{def:windingfraction-inf}
The \emph{winding fraction} of a (possibly infinite) cyclic graph $G$ is
\[ \wf(G)=\sup_{W\subseteq V\mbox{ finite}} \wf(G[W]). \]
\end{definition}
\noindent We say the \emph{supremum is attained} if there exists some finite $W$ with $\wf(G) = \wf(G[W])$. Note that if the supremum is attained, then the winding fraction is necessarily rational.
% Remark: The supremum being obtained does not imply that periodic points exist. For example, consider three equivalence classes of fast points --- we can have wf = 1/3 but no periodic points (except of course when the dynamics is restricted to a finite subset thereof).

If cyclic graph $G$ is finite, then we often label the vertices in cyclic order $v_0\prec v_1\prec\ldots\prec v_{n-1}$.

\begin{definition}
\label{def:dominated}
A vertex $v_i$ in a finite cyclic graph $G$ is \emph{dominated (by $v_{i+1}$)} if $N^-(G,v_{i+1})=N^-[G,v_i]$.
\end{definition}

We may \emph{dismantle} a finite cyclic graph by removing dominated vertices in rounds as follows. Let $S_0$ be the set of all dominated vertices in $G$, and in round $0$ remove these vertices to obtain $G[V\setminus S_0]$. Inductively for $i>0$, let $S_i$ be the set of all dominated vertices in $G[V\setminus(S_0 \cup \cdots \cup S_{i-1})]$, and in round $i$ remove these vertices to obtain $G[V\setminus(S_0 \cup \cdots \cup S_i)]$. Removing a dominated vertex does not change the homotopy type of the clique complex, and hence if $G$ dismantles to $G[W]$ for $W\subseteq V$ then we have a homotopy equivalence $\cl(G)\simeq\cl(G[W])$. By \cite[Proposition~3.12]{AA-VRS1} every finite cyclic graph dismantles down to an induced subgraph of the form $C_n^k$ for some $0 \le k < \frac{1}{2}n$. It follows from \cite[Proposition~3.14]{AA-VRS1} that the winding fraction of a finite cyclic graph $G$ can be equivalently defined as $\wf(G)=\frac{k}{n}$, where $G$ dismantles to $\cnk{n}{k}$.

\section{Finite cyclic dynamical systems}\label{sec:finite-dynamics}

Associated to each finite cyclic graph is a finite cyclic dynamical system, as studied in \cite{AAM}.

\begin{definition}
Let $G$ be a finite cyclic graph with vertex set $V$. The associated \emph{finite cyclic dynamical system} is generated by the dynamics $f \colon V \to V$, where $f(v)$ is defined to be the clockwise most vertex of $N^+[G,v]$.
\end{definition}

A vertex $v\in V$ is \emph{periodic} if $f^i(v)=v$ for some $i\ge1$. If $v$ is periodic then we refer to $\{f^i(v)~|~i\ge0\}$ as a \emph{periodic orbit}, whose \emph{length} $\olen$ is the smallest $i\ge1$ such that $f^i(v)=v$. The \emph{winding number} of a periodic orbit is $\sum_{i=0}^{\olen-1}\ddist(f^i(v),f^{i+1}(v))$. It follows from \cite[Lemma~2.3]{AAM} that in a finite cyclic graph, every periodic orbit has the same length $\olen$ and the same winding number $\owind$. We let $\numorb$ denote the number of periodic orbits in $G$.

By \cite[Proposition~3.12]{AA-VRS1} any finite cyclic graph dismantles down to an induced subgraph of the form $\cnk{n}{k}$. Since the proof of \cite[Lemma~6.3]{AAM} holds in the case of arbitrary finite cyclic graphs, rather than solely for Vietoris--Rips graphs of the circle, we see that the vertex set of this induced subgraph is the set of periodic vertices of $f$. Thus for a finite cyclic graph we have $\wf(G)=\frac{k}{n}=\frac{\owind}{\olen}$, so the winding fraction of $G$ is determined by the dynamical system $f$. We can describe the homotopy type of the clique complex of $G$ via the dynamical system $f$ as follows, where $\N$ denotes the set of nonnegative integers. 

\begin{proposition}\label{prop:vrchtpy}
If $G$ is a finite cyclic graph, then
\[ \cl(G)\simeq\begin{cases}
S^{2l+1} & \mbox{if }\frac{l}{2l+1}<\wf(G)<\frac{l+1}{2l+3}\mbox{ for some }l\in\N,\\
\bigvee^{\numorb-1}S^{2l} & \mbox{if }\wf(G)=\frac{l}{2l+1}.
\end{cases} \]
\end{proposition}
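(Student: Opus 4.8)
The plan is to reduce everything to the regular cyclic graph $\cnk{n}{k}$ that $G$ dismantles to. By \cite[Proposition~3.12]{AA-VRS1} and the discussion preceding the statement, $G$ dismantles to some $\cnk{n}{k}$ with $\wf(G)=\tfrac kn$, and since removing a dominated vertex does not change the homotopy type of the clique complex we have $\cl(G)\simeq\cl(\cnk{n}{k})$. Moreover the vertex set of this $\cnk{n}{k}$ is exactly the set of periodic vertices of the dynamical map $f$, and the number $\numorb$ of periodic orbits of $G$ equals the number of periodic orbits of $\cnk{n}{k}$; for $\cnk{n}{k}$ one computes directly that $f(i)=i+k$, so every orbit has length $\olen=\tfrac{n}{\gcd(n,k)}$ and winding number $\owind=\tfrac{k}{\gcd(n,k)}$, and the number of orbits is $\numorb=\gcd(n,k)$. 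So it suffices to prove the two cases of the formula with $\cl(\cnk{n}{k})$ in place of $\cl(G)$, and with $\numorb=\gcd(n,k)$.

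The homotopy types of $\cl(\cnk{n}{k})$ are already known: by the classification of Vietoris--Rips complexes of vertex sets on the circle (Adamaszek--Adams, \cite{AA-VRS1}; originally Adamaszek \cite{Adamaszek2013}),
\[
\cl(\cnk{n}{k})\simeq
\begin{cases}
S^{2l+1} & \tfrac{l}{2l+1}<\tfrac kn<\tfrac{l+1}{2l+3},\\
\bigvee^{\,\gcd(n,k)-1}S^{2l} & \tfrac kn=\tfrac{l}{2l+1},\\
\mbox{pt} & \tfrac kn<\tfrac12\ \mbox{(always),}
\end{cases}
\]
with the only remaining possibility $\tfrac kn=\tfrac12$ excluded since $k<\tfrac12 n$. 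I would simply cite this. The one genuine thing to check is the bookkeeping in the wedge case: when $\wf(G)=\tfrac{l}{2l+1}$, the dismantled graph is $\cnk{n}{k}$ with $\tfrac kn=\tfrac{l}{2l+1}$ in lowest terms after dividing by $g:=\gcd(n,k)$, i.e.\ $n=(2l+1)g$, $k=lg$; then $\gcd(n,k)=g=\numorb$, so $\cl(\cnk{n}{k})\simeq\bigvee^{\numorb-1}S^{2l}$, matching the claim. In the odd-sphere case the wedge count is irrelevant and one only needs that the winding fraction lies strictly between consecutive terms $\tfrac{l}{2l+1}$ and $\tfrac{l+1}{2l+3}$ of the mediant sequence, which is exactly the hypothesis. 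One should also note that the intervals $(\tfrac{l}{2l+1},\tfrac{l+1}{2l+3})$ together with the points $\tfrac{l}{2l+1}$ exhaust $[0,\tfrac12)$, so the two cases are exhaustive and the proposition covers all finite cyclic graphs.

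The main obstacle is not topological — it is making sure the translation between the three descriptions of the winding fraction ($\tfrac kn$ from dismantling, $\tfrac{\owind}{\olen}$ from the dynamics, and the supremum definition) is airtight, and in particular that $\numorb=\gcd(n,k)$ so that the $\bigvee^{\numorb-1}$ in the statement agrees with the $\bigvee^{\gcd(n,k)-1}$ coming from \cite{AA-VRS1}. This is handled by the cited \cite[Lemma~6.3]{AAM} (whose proof, as remarked, works for arbitrary finite cyclic graphs, not just Vietoris--Rips graphs of $S^1$) together with the elementary computation of $f$, $\olen$, $\owind$, and the orbit count on $\cnk{n}{k}$.
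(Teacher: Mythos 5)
Your proof is correct and follows essentially the same route as the paper: cite the known homotopy classification from \cite{AA-VRS1} (the paper uses Theorem~4.4 there, stated for general finite cyclic graphs in terms of the dismantled $\cnk{n}{k}$, giving $\bigvee^{n-2k-1}S^{2l}$) and then verify arithmetically that $\numorb=\gcd(n,k)=n-2k$ when $k/n=\tfrac{l}{2l+1}$. Your version spells out the dismantling reduction and the orbit count on $\cnk{n}{k}$ more explicitly, but the substance is identical; the only blemish is the mislabeled third case in your displayed classification (the contractible case corresponds to winding fraction at least $\tfrac12$, which is precisely what is excluded by $k<\tfrac12 n$, not the condition $\tfrac kn<\tfrac12$ that you wrote), which does not affect the argument since you correctly note that case never arises.
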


\begin{proof}
Theorem~4.4 of \cite{AA-VRS1} states 
\[ \cl(G)\simeq\begin{cases}
S^{2l+1} & \mbox{if }\frac{l}{2l+1}<\wf(G)<\frac{l+1}{2l+3}\mbox{ for some }l\in\N,\\
\bigvee^{n-2k-1}S^{2l} & \mbox{if }\wf(G)=\frac{l}{2l+1}\mbox{ and }G\mbox{ dismantles to }\cnk{n}{k}.
\end{cases} \]
Suppose $G$ dismantles to $\cnk{n}{k}$ and
\begin{equation}\label{eq:wf-equality}
\frac{l}{2l+1}=\wf(G)=\frac{k}{n}.
\end{equation}
Then by \eqref{eq:wf-equality} we have $\numorb=\gcd(n,k)=\frac{k}{l}=n-2k$.
\end{proof}

\begin{definition}
Let $G$ be a finite cyclic graph with dynamics $f\colon V\to V$, let $\widetilde{G}$ be a finite cyclic graph with dynamics $\tilde{f}\colon \widetilde{V}\to \widetilde{V}$, let $\wf(G)=\frac{\omega}{\ell}=\wf(\widetilde{G})$, and let $h \colon G \to \widetilde{G}$ be a cyclic graph homomorphism. We say that a periodic orbit of $\widetilde{G}$ with vertex set $\{\tilde{v}_1, \ldots, \tilde{v}_\ell\}$ is \emph{hit by $h$} if there is a periodic orbit of $G$ with vertex set $\{v_1, \ldots, v_\ell\}$ such that $\tilde{f}^i(h(\{v_1, \ldots, v_\ell\}))=\{\tilde{v}_1, \ldots, \tilde{v}_\ell\}$ for some $i$.
\end{definition}

It is clear that each periodic orbit of $G$ hits a periodic orbit of $\widetilde{G}$. Let $\hit(h)$ be the number of distinct periodic orbits of $\widetilde{G}$ hit by $h$.

\begin{proposition}\label{prop:rank}
Let $h\colon G \to \widetilde{G}$ be a cyclic graph homomorphism of finite cyclic graphs with $\wf(G)=\frac{l}{2l+1}=\wf(\widetilde{G})$. Then there are bases $\{b_1,\ldots,b_{d-1}\}$ for $\redhom_{2l}(\cl(G);\Z)=\Z^{d-1}$ and $\{\tilde{b}_1,\ldots,\tilde{b}_{\tilde{d}-1}\}$ for $\redhom_{2l}(\cl(\widetilde{G});\Z)=\Z^{\tilde{d}-1}$ such that the map on homology $h^*\colon \redhom_{2l}(\cl(G);\Z) \to \redhom_{2l}(\cl(\widetilde{G});\Z)$ induced by $h$ satisfies 
\begin{itemize}
\item for all $1\le i\le d-1$, either $h_*(b_i)=0$ or $h_*(b_i)=\tilde{b_j}$ for some $j$, and
\item $|\{j~|~h_*(b_i)=\tilde{b_j}\mbox{ for some }i\}|=\hit(h)-1$. 
\end{itemize}
If $d=\hit(h)=\tilde{d}$, then $h$ induces a homotopy equivalence of clique complexes. Furthermore, for any field $\F$, the map $\redhom_{2l}(\cl(G);\F) \to \redhom_{2l}(\cl(\widetilde{G});\F)$ induced by $h$ has rank $\hit(h)-1$.
\end{proposition}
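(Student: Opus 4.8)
The plan is to reduce everything to the dismantled regular graphs and then track periodic orbits through the homomorphism. First I would invoke \cite[Proposition~3.12]{AA-VRS1} to dismantle $G$ down to $\cnk{n}{k}$ and $\widetilde{G}$ down to $\cnk{\tilde{n}}{\tilde{k}}$; since the winding fraction is preserved by dismantling and equals $\frac{l}{2l+1}$, the computation in the proof of Proposition~\ref{prop:vrchtpy} gives $\redhom_{2l}(\cl(G);\Z)\cong\Z^{\numorb-1}$ and $\redhom_{2l}(\cl(\widetilde{G});\Z)\cong\Z^{\tilde\numorb-1}$, where $\numorb=n-2k$ and $\tilde\numorb=\tilde{n}-2\tilde{k}$ are the numbers of periodic orbits. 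The key point (from \cite[Lemma~6.3]{AAM}, valid for arbitrary finite cyclic graphs as noted in the text) is that the periodic vertices are exactly the vertices surviving dismantling, so I may assume $G=\cnk{n}{k}$ and $\widetilde{G}=\cnk{\tilde{n}}{\tilde{k}}$, and I need an explicit, geometrically meaningful basis of $\redhom_{2l}$ indexed by the $\numorb$ periodic orbits.

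Next I would construct such a basis. The orbits of $\cnk{n}{k}$ partition the vertex set into $\numorb=\gcd(n,k)$ classes, each a copy of the cycle $C_{\olen}^{\owind}$ with $\olen=n/\numorb$, $\owind=k/\numorb$, and $\frac{\owind}{\olen}=\frac{l}{2l+1}$. Each orbit spans a subcomplex isomorphic to $\cl(C_{2l+1}^{l})\simeq S^{2l}$ after rescaling, giving $\numorb$ natural $2l$-cycles $z_1,\dots,z_\numorb$ whose sum is null-homologous (it bounds, since including all periodic vertices back is the whole complex $\cl(\cnk{n}{k})\simeq S^{2l}$ and the fundamental class pulls back to $z_1+\cdots+z_\numorb$ up to sign); so $\{b_i:=[z_i]-[z_\numorb]\}_{i=1}^{\numorb-1}$, or some equivalent choice, is a basis. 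This is where I expect the main work: verifying that these orbit-cycles generate $\redhom_{2l}$ and satisfy the single relation $\sum[z_i]=0$, which I would do by a direct Mayer--Vietoris or nerve-style argument on the subcomplexes spanned by unions of orbits, or by citing the explicit homology generators already implicit in the proof of Theorem~4.4 of \cite{AA-VRS1}.

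Then I would analyze $h$. A cyclic homomorphism sends each periodic orbit of $G$ onto a periodic orbit of $\widetilde{G}$ (this is essentially the definition of ``hit'', together with the fact that $h$ is monotone and nonconstant so it cannot collapse an orbit), and on the level of the spanning $S^{2l}$'s it is either degree-$0$ or degree-$\pm1$ onto the target orbit's $S^{2l}$ --- degree $\pm1$ precisely when the orbit map is a bijection, i.e.\ when that target orbit is genuinely hit. Choosing the bases $\{b_i\}$ and $\{\tilde b_j\}$ compatibly so that $z_i\mapsto \pm z_{\sigma(i)}$ (with a sign absorbed into the basis choice) when the orbit is hit and $z_i\mapsto 0$ otherwise, I get $h_*(b_i)\in\{0\}\cup\{\tilde b_j\}$, and the set of $j$ actually appearing is indexed by the hit orbits modulo the one relation, giving cardinality $\hit(h)-1$. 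The rank over any field $\F$ then follows because this matrix is $0/1$ after the base change, with exactly $\hit(h)-1$ pivots; and if $d=\hit(h)=\tilde d$ the map is a bijection on $\redhom_{2l}$, hence (both spaces being simply connected wedges of $2l$-spheres, $l\ge1$; or simply connected circles reasoning for $l=0$) a homotopy equivalence by Whitehead's theorem. The main obstacle is making the ``degree $\pm1$ on each hit orbit'' statement precise with consistent orientations across all orbits simultaneously so that a single coherent pair of bases works; I would handle this by fixing the cyclic orientation of $S^1$ once and for all and letting it orient every orbit-cycle, so all signs are $+1$ and the relation and the map are literally as stated.
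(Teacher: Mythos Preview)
Your overall architecture (dismantle, pick an orbit-indexed basis, track orbits through $h$, finish with Whitehead) matches the paper's, but the central step---your basis construction---is incorrect, and this breaks the rest of the argument.

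You claim that a single periodic orbit $\sigma_i$ spans a subcomplex isomorphic to $\cl(C_{2l+1}^{l})\simeq S^{2l}$. This is false: in $C_{2l+1}^{l}$ every pair of vertices is adjacent (for any two residues $i,j$ modulo $2l+1$, one of $j-i$ or $i-j$ lies in $\{1,\dots,l\}$), so $\cl(C_{2l+1}^{l})$ is a full $2l$-simplex and hence contractible. Thus the classes $[z_i]$ you propose are all zero, the ``single relation $\sum[z_i]=0$'' is vacuous, and you have no basis. The paper instead uses \emph{pairs} of orbits: the induced subcomplex on $\sigma_0\cup\sigma_i$ (for $1\le i\le d-1$) is the boundary of a $(2l{+}1)$-dimensional cross-polytope, hence $S^{2l}$, and the corresponding cross-polytopal classes $e_1,\dots,e_{d-1}$ form the basis (this is the content of \cite[Proposition~8.4]{AA-VRS1}). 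With this basis, $h_*(e_i)=\tilde e_j$ or $0$ depending on whether the orbit $\sigma_i$ hits the same $\widetilde G$-orbit as $\sigma_0$ or a different one; that is how $\hit(h)-1$ arises.

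A second, smaller gap: you assert that a cyclic homomorphism sends each periodic orbit of $G$ \emph{onto} a periodic orbit of $\widetilde{G}$. In general $h(\sigma_i)$ need not consist of periodic vertices of $\widetilde{G}$ at all. The paper fixes this by post-composing with a sufficiently high iterate $\tilde f^{N}$ of the dynamics on $\widetilde{G}$, which retracts $\widetilde{V}$ onto its periodic vertices and is itself a homotopy equivalence on clique complexes; only then does one get a genuine map of orbits. (Relatedly, your dichotomy ``degree $\pm 1$ if hit, degree $0$ otherwise'' mislocates the issue: by definition \emph{every} orbit of $G$ hits some orbit of $\widetilde G$; what varies is whether two orbits of $G$ hit the \emph{same} orbit of $\widetilde G$, and the basis built from pairs $\sigma_0\cup\sigma_i$ is what converts this into the $0$/$\tilde e_j$ alternative.)
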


\begin{proof}
Since $\wf(G)=\frac{l}{2l+1}$ it follows that $G$ dismantles to $\cnk{d(2l+1)}{dl}$ for some $d\ge1$. Label the vertices of $\cnk{d(2l+1)}{dl}$ as $v_0 \prec v_1 \prec \ldots \prec v_{d(2l+1)-1}\prec v_0$; each periodic orbit of $G$ is of the form $\sigma_i=\{v_i,v_{i+d},\ldots,v_{i+2ld}\}$ for $0\le i\le d-1$. From the proof of \cite[Proposition~8.4]{AA-VRS1} we know that one basis for $\redhom_{2l}(\cl(G);\Z)=\Z^{d-1}$ is given by the cross-polytopal elements $\{e_1, \ldots, e_{d-1}\}$, where the vertex set of $e_i$ is $\sigma_0\cup\sigma_i$.

Let $\tilde{f}$ be the dynamical system corresponding to $\widetilde{G}$. Choose $N$ sufficiently large so that the image of $\tilde{f}^N\colon \widetilde{V} \to \widetilde{V}$ is contained in the set of periodic vertices of $\tilde{f}$. Relabel the vertices of $\tilde{f}^N(\widetilde{V})$ as $\tilde{v}_0 \prec \tilde{v}_1 \prec \ldots \prec \tilde{v}_{\tilde{d}(2l+1)-1}\prec \tilde{v}_0$ so that $\tilde{f}^N(\sigma_0)=\tilde{\sigma}_0$, where $\tilde{\sigma}_i=\{\tilde{v}_i,\tilde{v}_{i+\tilde{d}},\ldots,\tilde{v}_{i+2l\tilde{d}}\}$. A basis for $\redhom_{2l}(\cl(\widetilde{G});\Z)=\Z^{\tilde{d}-1}$ is given by the cross-polytopal elements $\{\tilde{e}_1, \ldots, \tilde{e}_{\tilde{d}-1}\}$, where the vertex set of $\tilde{e}_i$ is $\tilde{\sigma}_0\cup\tilde{\sigma}_i$; this follows since $\widetilde{G}$ dismantles down to $\cnk{\tilde{d}(2l+1)}{\tilde{d}l}$ with vertices labeled as above. Furthermore, $\tilde{f}^N\colon \widetilde{V} \to \widetilde{V}$ induces a homotopy equivalence $\cl(\widetilde{G})\xrightarrow{\simeq}\cl(\widetilde{G}[\tilde{f}^N(\widetilde{V})])$. Indeed, $\tilde{f}^n$ induces an isomorphism on homology by the choice of basis above, and so $\tilde{f}^n$ induces a homotopy equivalence by the theorems of Hurewicz and Whitehead.

For $1\le i,j\le d-1$ we have
\begin{align*}
\tilde{f}^N h(\sigma_i)=\tilde{\sigma}_j&\quad\mbox{implies}\quad(\tilde{f}^N h)^*(e_i)=\tilde{e}_j,\quad\mbox{and}\\
\tilde{f}^N h(\sigma_i)=\tilde{\sigma}_0&\quad\mbox{implies}\quad(\tilde{f}^N h)^*(e_i)=0.
\end{align*}
It follows from the definition of $\hit(h)$ that $|\{j~|~h_*(b_i)=\tilde{b_j}\mbox{ for some }i\}|=\hit(h)-1$, giving the desired bases.

If $d=\hit(h)=\tilde{d}$, then $h^*$ is an isomorphism on homology, and so $h$ is a homotopy equivalence by the theorems of Hurewicz and Whitehead. The claim about ranks over a field $\F$ follows from the statement for integer coefficents.
\end{proof}

\section{Infinite cyclic graphs}\label{sec:infinite}

We now study the homotopy types of all cyclic graphs, finite or infinite. We say a cyclic graph $G$ is \emph{generic} if $\frac{l}{2l+1}<\wf(G)<\frac{l+1}{2l+3}$ for some $l\in\N$, or if $\wf(G)=\frac{l+1}{2l+3}$ and the supremum in Definition~\ref{def:windingfraction-inf} is not attained. Otherwise, we say that $G$ is \emph{singular}.

For an arbitrary undirected graph $G$, the clique complex $\cl(G)$ is given the topology of a CW-complex. More precisely, this is the weak topology with respect to finite-dimensional skeleta, or equivalently, with respect to subcomplexes induced by finite subsets of $V(G)$. Formally, let $\fin(G)$ be the poset of all finite subsets of $V(G)$ ordered by inclusion. We have a functor $\cl(-)\colon \fin(G) \to Top$ and
\[ \cl(G)=\colim_{W \in \fin(G)} \cl(G[W])\simeq \hocolim_{W \in \fin(G)}\cl(G[W]). \]
The last homotopy equivalence follows from \cite[Section~3]{Hocolim} and the fact that all maps $\cl(G[W])\hookrightarrow\cl(G[\widetilde{W}])$ for $W\subseteq \widetilde{W}$ are inclusions of closed subcomplexes, hence cofibrations. For a finite subset $W_0\subseteq V(G)$ let $\fin(G;W_0)$ be the subposet of $\fin(G)$ consisting of all sets which contain $W_0$. This poset is cofinal in $\fin(G)$, and hence
\[ \cl(G)=\colim_{W \in \fin(G;W_0)} \cl(G[W])\simeq \hocolim_{W \in \fin(G;W_0)}\cl(G[W]). \]

\begin{theorem}\label{thm:homotopy-generic}
Let $G$ be a generic cyclic graph with $\frac{l}{2l+1}<\wf(G)\le\frac{l+1}{2l+3}$ for $l\in\N$. Then $\cl(G)\simeq S^{2l+1}$.

Moreover, if $h\colon G\hookrightarrow\widetilde{G}$ is an inclusion of generic cyclic graphs with identical vertex sets $V(G)=V(\widetilde{G})$ and with $\frac{l}{2l+1}<\wf(G)\le\wf(\widetilde{G})\le\frac{l+1}{2l+3}$, then $h$ induces a homotopy equivalence of clique complexes.
\end{theorem}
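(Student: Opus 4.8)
The plan is to express $\cl(G)$ as a homotopy colimit of the clique complexes of its finite cyclic subgraphs, to check that each such complex is $\simeq S^{2l+1}$, and to show that the bonding maps are homotopy equivalences, so that the colimit collapses onto $S^{2l+1}$. First I would fix a finite $W_0\subseteq V(G)$ with $\wf(G[W_0])>\frac{l}{2l+1}$, which exists by Definition~\ref{def:windingfraction-inf} since $\wf(G)>\frac{l}{2l+1}$. For any finite $W$ with $W_0\subseteq W\subseteq V(G)$, monotonicity of $\wf$ under inclusion of vertex subsets (immediate from the definition of the winding fraction) gives $\wf(G[W])\ge\wf(G[W_0])>\frac{l}{2l+1}$, while $\wf(G[W])\le\wf(G)\le\frac{l+1}{2l+3}$ by Definition~\ref{def:windingfraction-inf}. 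This upper bound is in fact strict: if $\wf(G)<\frac{l+1}{2l+3}$ it is immediate, and if $\wf(G)=\frac{l+1}{2l+3}$ then genericity means the supremum in Definition~\ref{def:windingfraction-inf} is not attained, so $\wf(G[W])<\frac{l+1}{2l+3}$. Hence $\frac{l}{2l+1}<\wf(G[W])<\frac{l+1}{2l+3}$, and Proposition~\ref{prop:vrchtpy} gives $\cl(G[W])\simeq S^{2l+1}$ for every such $W$.

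The crucial step, which I expect to be the main obstacle, is the claim that \emph{if $H$ and $\widetilde H$ are finite cyclic graphs with $\wf(H),\wf(\widetilde H)\in(\frac{l}{2l+1},\frac{l+1}{2l+3})$, then every cyclic graph homomorphism $H\to\widetilde H$ induces a homotopy equivalence $\cl(H)\to\cl(\widetilde H)$}. I would prove this by adapting the cross-polytopal homology computation in the proof of Proposition~\ref{prop:rank} to the odd-dimensional case: $\redhom_{2l+1}(\cl(H);\Z)\cong\Z$ is generated by a single cross-polytopal cycle supported on periodic vertices of the dynamics of $H$, and after composing the homomorphism with a sufficiently high power of the dynamics of $\widetilde H$ (which induces a homotopy equivalence on clique complexes, exactly as in the proof of Proposition~\ref{prop:rank}) one checks that this generator maps to a generator of $\redhom_{2l+1}(\cl(\widetilde H);\Z)\cong\Z$; as both complexes are $2l$-connected, the theorems of Hurewicz and Whitehead then upgrade this isomorphism to a homotopy equivalence (the case $l=0$ works identically, $\pi_1(S^1)$ being abelian). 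Alternatively, the claim should follow from the functoriality results for cyclic homomorphisms in \cite{AA-VRS1}. Granting it, each inclusion $\cl(G[W])\hookrightarrow\cl(G[W'])$ for $W_0\subseteq W\subseteq W'$ is a homotopy equivalence.

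To conclude the first statement, recall from the discussion above that $\cl(G)\simeq\hocolim_{W\in\fin(G;W_0)}\cl(G[W])$. The poset $\fin(G;W_0)$ has $W_0$ as its least element, so its nerve is contractible, and by the previous paragraph the diagram $W\mapsto\cl(G[W])$ is objectwise homotopy equivalent, via the bonding maps emanating from $W_0$, to the constant diagram at $\cl(G[W_0])$. Since homotopy colimits preserve objectwise equivalences, $\cl(G)\simeq\hocolim_{\fin(G;W_0)}\cl(G[W_0])\simeq\cl(G[W_0])\simeq S^{2l+1}$. For the ``moreover'' statement, observe that $V(G)=V(\widetilde G)$, so the two index posets coincide; choosing $W_0$ as above (valid since $\wf(G)>\frac{l}{2l+1}$), for every finite $W\supseteq W_0$ we have $\frac{l}{2l+1}<\wf(G[W])\le\wf(\widetilde G[W])\le\wf(\widetilde G)\le\frac{l+1}{2l+3}$, the last inequality strict by genericity of $\widetilde G$, so by the claim the inclusion $\cl(G[W])\hookrightarrow\cl(\widetilde G[W])$ is a homotopy equivalence. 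Since $h$ restricts to these inclusions compatibly, it is an objectwise equivalence of $\fin(G;W_0)$-indexed diagrams, and passing to homotopy colimits shows that $\cl(h)$ is a homotopy equivalence. Apart from the italicized claim, everything here is bookkeeping with winding fractions together with standard properties of homotopy colimits.
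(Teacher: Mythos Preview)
Your proof is correct and follows essentially the same strategy as the paper: both restrict to the cofinal poset $\fin(G;W_0)$, verify that each $\cl(G[W])\simeq S^{2l+1}$ with all bonding maps homotopy equivalences, and pass to the homotopy colimit (the paper via the Quasifibration Lemma, you via the equivalent observation that the diagram is naturally equivalent to a constant one over a poset with initial object). Your italicized claim, which you flagged as the main obstacle, is exactly \cite[Proposition~4.9]{AA-VRS1}, which the paper simply cites; so there is no gap, only a reference to be filled in.
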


\begin{proof}
Our proof uses the same tools as \cite[Proposition~7.1]{AA-VRS1}. The hypotheses on $G$ imply there exists a finite subset $W_0 \subseteq V$ such that for every finite subset $W$ with $W_0 \subseteq W \subseteq V$, we have $\frac{l}{2l+1}<\wf(G[W])<\frac{l+1}{2l+3}$. By \cite[Proposition~4.9]{AA-VRS1} all maps in the diagram $\cl(G[-])\colon \fin(G;W_0)\to Top$ are homotopy equivalences between spaces homotopy equivalent to $S^{2l+1}$. Hence
\[ \cl(G)\simeq \hocolim_{W \in \fin(G;W_0)}\cl(G[W])\simeq S^{2l+1} \]
by the Quasifibration Lemma \cite[Proposition~3.6]{Hocolim}.

The same is true for $\widetilde{G}$. Furthermore, the maps $\cl(G[W])\to\cl(\widetilde{G}[W])$ induced by $h$ now define a natural transformation of diagrams $\cl(G[-])\to\cl(\widetilde{G}[-])$ which is a levelwise homotopy equivalence by \cite[Proposition~4.9]{AA-VRS1}. It follows that the induced map of (homotopy) colimits is a homotopy equivalence.
\end{proof}

\begin{theorem}\label{thm:homotopy-singular-I}
Let $G$ be a singular cyclic graph with $\wf(G)=\frac{l}{2l+1}$. Then $\cl(G)\simeq\bigvee^\kappa S^{2l}$ for some cardinal number $\kappa$.
\end{theorem}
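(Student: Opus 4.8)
The plan is to realize $\cl(G)$ as a filtered (homotopy) colimit of the clique complexes of its finite induced subgraphs, to observe that each of these is a wedge of $(2l)$-spheres, and then to recover the homotopy type of $\cl(G)$ from its homology together with a connectivity estimate. First I would record the key structural fact that a singular cyclic graph with $\wf(G)=\frac{l}{2l+1}$ automatically \emph{attains} its winding fraction: if $l\ge 1$ and the supremum in Definition~\ref{def:windingfraction-inf} were not attained, then writing $\frac{l}{2l+1}=\frac{(l-1)+1}{2(l-1)+3}$ would show that $G$ is generic, a contradiction; and if $l=0$ then $\wf(G[\{v\}])=0$ for any single vertex $v$. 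So fix a finite $W_0\subseteq V$ with $\wf(G[W_0])=\frac{l}{2l+1}$. Because the winding fraction is monotone under enlarging the vertex set (a cyclic homomorphism $\cnk{n}{k}\to G[W]$ composes with the inclusion $G[W]\hookrightarrow G[W']$), every finite $W$ with $W_0\subseteq W\subseteq V$ satisfies $\wf(G[W])=\frac{l}{2l+1}$, and hence $\cl(G[W])\simeq\bigvee^{\numorb(W)-1}S^{2l}$ by Proposition~\ref{prop:vrchtpy}, where $\numorb(W)$ is the number of periodic orbits of the dynamics of $G[W]$. Recall from the discussion preceding Theorem~\ref{thm:homotopy-generic} that $\cl(G)=\colim_{W\in\fin(G;W_0)}\cl(G[W])$, a colimit along inclusions of closed subcomplexes.

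Next I would compute $\redhom_*(\cl(G);\Z)$. Since reduced singular homology commutes with filtered colimits of spaces along closed inclusions, $\redhom_i(\cl(G);\Z)=\colim_{W\in\fin(G;W_0)}\redhom_i(\cl(G[W]);\Z)$, which vanishes for $i\ne 2l$ and equals $\colim_W\Z^{\numorb(W)-1}$ for $i=2l$. The crux is that this last colimit is free abelian. By Proposition~\ref{prop:rank} each structure map $\redhom_{2l}(\cl(G[W]);\Z)\to\redhom_{2l}(\cl(G[\widetilde W]);\Z)$ is, in suitable bases, a map carrying every basis vector to a basis vector or to $0$; moreover the relevant bases are the cross-polytopal classes indexed by periodic orbits, and the matching of bases is governed by how the dynamics of $G[\widetilde W]$ acts on the images of the periodic orbits of $G[W]$. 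I would upgrade this per-map statement to a functorial one: fixing once and for all a vertex $p^*\in W_0$ that is periodic in $G[W_0]$ singles out for every $W\in\fin(G;W_0)$ a distinguished periodic orbit $\sigma_0^W$ (the one eventually reached from $p^*$ under the dynamics of $G[W]$) and hence a preferred basis $B_W$ of $\redhom_{2l}(\cl(G[W]);\Z)$, in such a way that every inclusion $G[W]\hookrightarrow G[\widetilde W]$ carries $B_W$ into $B_{\widetilde W}\cup\{0\}$ compatibly with composition. Equivalently, $W\mapsto B_W\sqcup\{\ast\}$ is a functor $\fin(G;W_0)\to\mathrm{Set}_\ast$ and $\redhom_{2l}(\cl(G[-]);\Z)=\widetilde\Z[B_{(-)}\sqcup\{\ast\}]$; since the reduced free abelian group functor is a left adjoint and hence preserves filtered colimits, $\redhom_{2l}(\cl(G);\Z)\cong\widetilde\Z\big[\colim_W(B_W\sqcup\{\ast\})\big]$ is free abelian, say of rank $\kappa$.

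Finally I would identify the homotopy type. For $l\ge 1$: each $\cl(G[W])$ with $W\supseteq W_0$ is $(2l-1)$-connected, hence so is the filtered homotopy colimit $\cl(G)$; in particular $\cl(G)$ is simply connected, so by the Hurewicz theorem $\pi_{2l}(\cl(G))\cong\redhom_{2l}(\cl(G);\Z)\cong\Z^\kappa$. Choosing maps $S^{2l}\to\cl(G)$ representing a basis and wedging them together produces a map $\bigvee^\kappa S^{2l}\to\cl(G)$ that induces an isomorphism on all reduced homology groups between simply connected CW complexes, hence a homotopy equivalence by the theorems of Hurewicz and Whitehead. For $l=0$ (i.e.\ $\wf(G)=0$): every $\cl(G[W])$ with $W\supseteq W_0$ is the clique complex of a finite cyclic graph that dismantles to some $\cnk{n}{0}$, hence is homotopy equivalent to a discrete set, so every map of a sphere or a disk into $\cl(G)$ factors through a contractible path component of some $\cl(G[W])$; therefore every path component of $\cl(G)$ is weakly contractible, and $\cl(G)\simeq\bigvee^\kappa S^0$, where $\kappa+1$ is the number of path components.

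I expect the main obstacle to be the functoriality claim in the second step. Proposition~\ref{prop:rank} is stated for a single cyclic homomorphism, and promoting it to a coherent choice of bases over the entire, possibly uncountable, poset $\fin(G;W_0)$ requires verifying that periodic orbits, the distinguished orbit $\sigma_0^W$, and the associated cross-polytopal homology classes are natural with respect to the inclusion homomorphisms $G[W]\hookrightarrow G[\widetilde W]$ and compose correctly. This coherence is genuinely necessary: a filtered colimit of free abelian groups along maps that are merely ``basis-to-basis-or-$0$'' in \emph{some} basis for each individual map need not be free abelian (one can build such a diagram with colimit $\Z[1/2]$), so the freeness of $\redhom_{2l}(\cl(G);\Z)$ truly rests on the uniform, dynamically defined choice of bases, and this is the heart of the argument; the $l=0$ case and the final Hurewicz--Whitehead step are routine by comparison.
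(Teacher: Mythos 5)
Your proposal follows the same strategy as the paper: realize $\cl(G)$ as a filtered colimit of clique complexes of finite induced subgraphs containing a fixed $W_0$ with $\wf(G[W_0])=\frac{l}{2l+1}$, observe that each stage is a finite wedge of $2l$-spheres via Proposition~\ref{prop:vrchtpy}, deduce that $\redhom_*(\cl(G);\Z)$ is free abelian and concentrated in degree $2l$, and conclude that $\cl(G)$ is a Moore space $M(\bigoplus^\kappa\Z,2l)\simeq\bigvee^\kappa S^{2l}$. The only difference is one of exposition: the paper defers the free-abelian-ness of $\redhom_{2l}$ to citations of \cite[Propositions~7.5 and 8.4]{AA-VRS1}, while you sketch the underlying coherent-basis argument directly (correctly identifying it as the crux and correctly noting via the $\Z[1/2]$ example that per-map basis control is insufficient), and you additionally make explicit the attainment of the supremum and the $l=0$ edge case, both of which the paper handles implicitly.
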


\begin{proof}
Let $W_0$ be a subset of $V$ such that $\wf(G[W_0])=\frac{l}{2l+1}$. By Proposition~\ref{prop:vrchtpy}, every space in the diagram $\cl(G) = \colim_{W\in \fin(G;W_0)}\cl(G[W])$ is homotopy equivalent to a finite wedge sum of $2l$-spheres. It follows immediately that $\cl(G)$ is simply-connected and its homology is torsion-free and concentrated in degree $2l$. Furthermore, \cite[Proposition~8.4]{AA-VRS1} and the proof technique of \cite[Proposition~7.5]{AA-VRS1} show that the group $\redhom_{2l}(\cl(G);\Z)$ is free abelian. Hence $\cl(G)$ is a model of the Moore space $M(\bigoplus^\kappa \Z,2l)$, unique up to homotopy and equivalent to $\bigvee^\kappa S^{2l}$ for some cardinal number $\kappa$.
\end{proof}

In Theorem~\ref{thm:homotopy-singular-II} we refine this theorem, for certain singular cyclic graphs $G$, to describe the value of $\kappa$ in $\cl(G)\simeq\bigvee^\kappa S^{2l}$. But before we can do so, we first need more properties about infinite cyclic graphs.

\subsection{Infinite cyclic graph properties}

In the case of finite cyclic graphs, the language of cyclic dynamical systems from Section~\ref{sec:finite-dynamics} proved to be convenient. Unfortunately, not all infinite cyclic graphs can be equipped with an analogous dynamical system, since it is possible that the forward neighborhood $N^+[G,v]$ has no clockwise-most vertex, for example if $N^+[G,v]$ is an open subset of $S^1$.

\begin{definition}\label{def:gamma_m}
Let $G$ be a cyclic graph with vertex set $V\subseteq S^1$, and let $m\ge 1$. We define the map $\gamma_m\colon V\to\R$ by
\[ \gamma_m(v_0)=\sup\Biggl\{\sum_{i=0}^{m-1}\ddist(v_i,v_{i+1})~\Bigg|~\mbox{ there exists }v_0\diredge v_1\diredge\ldots\diredge v_m\Biggr\}. \]
The map $f_m\colon V\to S^1$ is defined by $f_m(v)=(v+\gamma_m(v))\md 1$.
\end{definition}

\begin{remark}
If $N^+[g,v]$ is closed for all $v\in V$, then as in the finite case we can define a dynamical system $f \colon V \to V$, where $f(v)$ is defined to be the clockwise-most vertex of $N^+[G,v]$. In this case we have $f_m(v)=f^m(v)\in V$ for all $v\in V$.
\end{remark}

\begin{lemma}\label{lem:wn-discrepancy}
Let $G$ be a cyclic graph, let $v,u\in V(G)$, and let $m\in\N$. Then $|\gamma_m(v)-\gamma_m(u)|<1$.
\end{lemma}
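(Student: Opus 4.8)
The statement concerns $\gamma_m$, the supremum of total clockwise lengths of directed paths of combinatorial length $m$ starting at a given vertex. I want to show that for any two vertices $v, u$ in a cyclic graph, the values $\gamma_m(v)$ and $\gamma_m(u)$ differ by strictly less than $1$. The key idea is that in a cyclic graph, out-neighborhoods are "nested along the circle": if $v \preceq u$ (in the appropriate cyclic sense), then any directed edge out of $v$ that reaches a point far enough clockwise forces a directed edge out of $u$ reaching the same point, by the defining property of cyclic graphs. So a near-optimal directed path of length $m$ out of $v$ can be "shadowed" by a directed path of length $m$ out of $u$ that ends essentially no earlier.

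**Key steps.** First I would set up the comparison by cyclic order: without loss of generality assume $u$ lies on the clockwise arc $[v, f_1(v)\text{-ish}]$ — more carefully, I would handle the case where $u$ is clockwise-close to $v$, and then the general case follows since any two points are connected by a chain of such comparisons around the circle (or one simply notes $\gamma_m$ varies by a controlled amount; but the cleanest is: it suffices to bound $\gamma_m(v) - \gamma_m(u)$ when $v \preceq u \preceq f_1(v)$-region, then iterate). Second, the core lemma: if $v \preceq w \preceq u \preceq v$ and $v \diredge z$ with $w \preceq z$ (cyclically, $z$ on the far side), then by the cyclic graph axiom there are edges $v \diredge w$ and $w \diredge z$; applied with $w = u$, this shows that from $u$ we can "jump past" wherever $v$ could jump, as long as $u$ is between $v$ and the target. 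Third, I would build inductively a directed path $u = u_0 \diredge u_1 \diredge \cdots \diredge u_m$ out of $u$ that tracks a given near-optimal path $v = v_0 \diredge v_1 \diredge \cdots \diredge v_m$ out of $v$, maintaining the invariant that $u_i$ is weakly clockwise-ahead of $v_i$ but by less than one full loop — the cyclic edge property lets each step of the $v$-path be matched or overshot by a step of the $u$-path. Taking the supremum over near-optimal $v$-paths and using $u$'s head start/deficit is at most (just under) $1$ gives $\gamma_m(u) \ge \gamma_m(v) - 1 + (\text{something} > 0)$, hence $\gamma_m(v) - \gamma_m(u) < 1$; by symmetry (swapping roles of $v$ and $u$) we get the two-sided bound $|\gamma_m(v) - \gamma_m(u)| < 1$.

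**Main obstacle.** The delicate point is the strictness of the inequality and the bookkeeping of winding: $\gamma_m$ takes values in $\R$, not in $S^1$, so I must carefully track how many times the tracked path wraps around, and ensure the "shadow" path out of $u$ never falls behind by a full loop and never gets artificially ahead by a full loop. The supremum (rather than a maximum) also requires an $\epsilon$-argument: for each $\epsilon > 0$ choose a $v$-path within $\epsilon$ of $\gamma_m(v)$, build the shadow $u$-path, conclude $\gamma_m(u) > \gamma_m(v) - 1 - \epsilon$ — wait, that only gives $\le 1$; to get strict $< 1$ I need the head-start/deficit between $u$ and $v$ to be strictly less than $1$, i.e. $\vec d(u,v) + \vec d(v,u) = 1$ forces one of the two gaps to be genuinely positive, and the matching step should gain at least that amount. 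Making this quantitative gain explicit — showing the shadow path's length exceeds $\gamma_m(v) - 1$ by a definite positive amount independent of $\epsilon$ — is where the real care is needed.

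Here is a cleaner reformulation I would actually try to write up: I expect the slick proof goes through the observation that $\gamma_m(v) \le \gamma_1(v) + \gamma_{m-1}(f_1(v))$-type subadditivity combined with an $m=1$ base case, but since $f_m$ is not literally iterated $f_1$ in the infinite setting, I would instead prove directly by induction on $m$ that $v \preceq u$ (clockwise, close) implies $\gamma_m(u) \ge \gamma_m(v) - \vec d(v,u)$ and $\gamma_m(v) \ge \gamma_m(u) - \vec d(u,v)$ — actually the first of these, together with $\gamma_m(u) \le \gamma_m(v) + \vec d(v, u)$-type opposite bound proved the same way with roles reversed along the circle, and the fact that traversing the full circle picks up total $1$ from the $\vec d$ terms while $\gamma_m$ is "periodic-like", yields $|\gamma_m(v) - \gamma_m(u)| < 1$. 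I expect the author's proof is a short induction on $m$ using exactly the cyclic-graph edge axiom as the inductive step, and I would mirror that.
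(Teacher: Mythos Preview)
Your proposal contains the right idea but is wrapped in several layers of unnecessary complication. The paper's proof is two lines: for $v\neq u$, cyclicity gives
\[
\gamma_m(v)-\gamma_m(u)\le\ddist(v,u)<1 \quad\text{and}\quad \gamma_m(u)-\gamma_m(v)\le\ddist(u,v)<1,
\]
and that is the entire argument. Your ``cleaner reformulation'' arrives at exactly these two inequalities, and your shadow-path construction is precisely how one justifies them (take a near-optimal $m$-path $v=v_0\diredge\cdots\diredge v_m$; use the cyclic axiom to replace the initial segment so that the path starts at $u$ instead, losing at most $\ddist(v,u)$ in total length). So substantively you and the paper agree.

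Where you go astray is in the bookkeeping around strictness. You write that the shadow construction ``only gives $\le 1$'' and that extracting a ``definite positive amount independent of $\epsilon$'' is the hard part. It is not: the strict inequality is free, because $\ddist(v,u)<1$ whenever $v\neq u$ (the circle has unit circumference). No $\epsilon$-argument, no ``quantitative gain,'' no iteration around the circle by a chain of close comparisons, and no explicit induction on $m$ is needed. The two one-sided bounds, each strictly less than $1$, give $|\gamma_m(v)-\gamma_m(u)|<1$ immediately. (The paper even footnotes that the \emph{wrong} pairing $\gamma_m(v)-\gamma_m(u)\le\ddist(u,v)$ can fail, underscoring that the point is simply to match each difference with the correct clockwise distance.)

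In short: your shadow-path mechanism is the right engine, but you should drive it straight to the inequality $\gamma_m(v)-\gamma_m(u)\le\ddist(v,u)$ and stop there.
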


\begin{proof}
Let $v\neq u$. The fact that $G$ is cyclic implies\footnote{Of course the bound $\gamma_m(v)-\gamma_m(u)\le\ddist(u,v)$ may be false.}
\[\gamma_m(v)-\gamma_m(u)\le\ddist(v,u)<1\quad\mbox{and}\quad\gamma_m(u)-\gamma_m(v)\le\ddist(u,v)<1.\]
\end{proof}

\begin{lemma}\label{lem:wf-as-limit}
Let $G$ be a cyclic graph. For any $v\in V$ we have 
\begin{equation}\label{eq:wf}
\wf(G) = \lim_{m\to\infty} \frac{\gamma_m(v)}{m}.
\end{equation}
\end{lemma}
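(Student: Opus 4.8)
The plan is to show that the sequence $a_m := \gamma_m(v)$ is essentially subadditive and that its growth rate $\lim_{m\to\infty} a_m/m$ exists, equals $\wf(G)$, and is independent of the chosen vertex $v$. First I would establish approximate superadditivity: given a directed path $v = v_0 \diredge v_1 \diredge \cdots \diredge v_p$ realizing (up to $\varepsilon$) the value $\gamma_p(v)$, and then a directed path $v_p \diredge w_1 \diredge \cdots \diredge w_q$ realizing (up to $\varepsilon$) the value $\gamma_q(v_p)$, concatenating gives a directed path of length $p+q$ starting at $v$. The clockwise displacement it achieves is the sum of the two displacements. Because clockwise distances can over/under-count by whole turns, I would phrase this in terms of the lifted quantities: writing $\widetilde{d}$-sums as real numbers, one obtains $\gamma_{p+q}(v) \geq \gamma_p(v) + \gamma_q(v_p) - 2\varepsilon$, and then by Lemma~\ref{lem:wn-discrepancy}, $\gamma_q(v_p) \geq \gamma_q(v) - 1$. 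Hence $a_{p+q} \geq a_p + a_q - 1$ for all $p,q$, i.e.\ $(-a_m)$ is subadditive up to the additive constant $1$. By Fekete's lemma applied to $m \mapsto -a_m + 1$, the limit $L := \lim_{m\to\infty} a_m/m$ exists in $[-\infty,\infty)$; since $a_m \geq 0$ always (the empty-sum/zero path, or rather $\gamma_m \geq 0$ as distances are nonnegative and edges exist whenever $\wf(G)>0$; and if $\wf(G) = 0$ one argues $\gamma_m$ is bounded directly), we get $L \in [0,\infty)$. Independence of $v$ follows from Lemma~\ref{lem:wn-discrepancy}: $|a_m(v) - a_m(u)|/m < 1/m \to 0$.

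Next I would identify $L$ with $\wf(G)$. For the inequality $L \leq \wf(G)$: given a directed path $v_0 \diredge \cdots \diredge v_m$ with large clockwise displacement $D = \sum \widetilde{d}(v_i,v_{i+1})$, the vertex set $W = \{v_0,\ldots,v_m\}$ is finite, and the path determines a cyclic homomorphism from a regular cyclic graph $\cnk{n}{k}$ into $G[W]$ with $k/n$ close to $D/m$ — more precisely, the path wraps around $S^1$ roughly $\lfloor D \rfloor$ times using $m$ steps, so one can build $\cnk{m}{\lfloor D\rfloor}$ (or a slightly perturbed version) mapping cyclically into $G[W]$, giving $\wf(G[W]) \geq \lfloor D \rfloor / m \approx D/m$; taking suprema and then $m \to \infty$ yields $\wf(G) \geq L$. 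For the reverse inequality $\wf(G) \leq L$: given any finite $W$ with $\wf(G[W]) = k/n$, the graph $G[W]$ dismantles to $\cnk{n}{k}$, whose dynamical system has a periodic orbit of length $\ell$ and winding number $\owind$ with $\owind/\ell = k/n$ (as recalled in Section~\ref{sec:finite-dynamics}); iterating this orbit $t$ times produces, for $v$ a periodic vertex, a directed path in $G$ of length $t\ell$ with clockwise displacement exactly $t\owind$, so $\gamma_{t\ell}(v) \geq t\owind$, giving $L \geq \owind/\ell = \wf(G[W])$; taking the supremum over $W$ gives $L \geq \wf(G)$. One must bridge from a periodic vertex of $G[W]$ back to the fixed vertex $v$ using the $v$-independence of $L$ established above, and handle the case where the supremum defining $\wf(G)$ is not attained by an approximation argument.

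The main obstacle is the bookkeeping of winding: $\gamma_m$ is defined via a supremum of sums of \emph{clockwise} distances $\widetilde{d}$, which live in $[0,1)$, so a single "step" never records more than one full turn, but the sum over $m$ steps records the total signed winding correctly only if one is careful that no cancellation occurs — and indeed in a cyclic graph edges only ever go "forward" in a controlled sense, so there is no cancellation, but making this precise (and matching it to the $k/n$ of regular cyclic graphs, whose edges also only span less than half the circle) requires care. I expect the cleanest route is to work throughout with the lift to $\R$: fix the universal cover $\R \to S^1$, lift the vertices of a directed path to a monotone-up-to-bounded-error sequence in $\R$, and observe that $\gamma_m(v)$ is exactly the supremal net rise over $m$ edges; then subadditivity, Fekete, and the comparison with $\cnk{n}{k}$ all become statements about monotone real sequences. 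The finite-case identity $\wf(G[W]) = \owind/\ell$ and Proposition~\ref{prop:vrchtpy}'s surrounding discussion supply the dynamical-systems input; the genericity/attainment dichotomy only enters in whether the limit is itself a "nice" fraction, which does not affect the limit formula \eqref{eq:wf} itself.
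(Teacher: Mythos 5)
Your overall strategy---Fekete's lemma to get existence of the limit, then two inequalities against $\wf(G)$---is a genuinely different organization than the paper's, and parts of it work: the superadditivity bound $\gamma_{p+q}(v)\ge\gamma_p(v)+\gamma_q(v)-1$ via Lemma~\ref{lem:wn-discrepancy} is correct, as is the vertex-independence of the limit, and your lower-bound argument (pass to a finite $W$ with $\wf(G[W])$ near the supremum, dismantle to $\cnk{n}{k}$, iterate a periodic orbit to build long paths with displacement $t\owind$, conclude $L\ge\owind/\olen$) is essentially the same as the paper's lower bound, which uses a periodic vertex $u$ of $G[W]$ and the finite-case identity $\lfloor\gamma_m^W(u)\rfloor=\lfloor mk/n\rfloor$.

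The gap is in your upper bound $L\le\wf(G)$, specifically in the sentence where a long path ``determines a cyclic homomorphism from a regular cyclic graph $\cnk{m}{\lfloor D\rfloor}$ into $G[W]$.'' The naive vertex map $i\mapsto v_i$ is not a cyclic homomorphism: a cyclic homomorphism must send the cyclically ordered vertices $0\prec 1\prec\cdots\prec m-1\prec 0$ of $\cnk{m}{\lfloor D\rfloor}$ to points of $S^1$ in (weak) cyclic order, but once the path winds around more than once the sequence $v_0,v_1,\ldots,v_{m}$ visits the circle in a non-cyclic order. Moreover $\cnk{m}{\lfloor D\rfloor}$ has edges $i\diredge(i+s)$ for all $s=1,\ldots,\lfloor D\rfloor$, whereas the path only furnishes the consecutive edges $v_i\diredge v_{i+1}$ (plus whatever cyclicity forces), so the edge condition of Definition~2 is also not automatically satisfied. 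You flagged the winding bookkeeping as the main obstacle, and this is exactly where it bites. The paper sidesteps the cyclic-homomorphism definition entirely at this step: it first proves the finite case by combining the dismantling fact $\wf(G[W])=k/n$ with the explicit formula $\lfloor\gamma_m(v)\rfloor=\lfloor mk/n\rfloor$ for periodic $v$, and then for the infinite upper bound it takes $W$ to be the vertex set of a long path, observes that cyclicity of $G[W]$ forces $\lfloor\gamma_{im}^W(v_0)\rfloor/(im)\ge\lfloor\gamma_m(v_0)\rfloor/m$ for all $i$, and applies the finite case to $G[W]$ to produce $\wf(G[W])>\wf(G)$, contradicting Definition~\ref{def:windingfraction-inf}. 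To repair your argument you could replace your step 3 with this reduction-to-the-finite-case device, which coexists perfectly well with your Fekete framing; what does not work is producing a cyclic homomorphism directly from the vertex list of the path.

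A smaller point: you need $\gamma_m(v)$ to be well defined (i.e., paths of every length to exist from $v$); in the paper's applications the Vietoris--Rips graphs of the ellipse have this property, and the paper also implicitly relies on it, so this is not a real defect of your argument, just something to state.
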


\begin{proof}
As a first case, let $G$ be finite. Recall that $G$ dismantles to a cyclic graph $\cnk{n}{k}$. For any periodic $v\in V$ we have $\lfloor \gamma_m(v)\rfloor=\lfloor\frac{mk}{n}\rfloor$, and hence $\lim_{m\to\infty} \frac{\gamma_m(v)}{m}=\frac{k}{n}=\wf(G)$. Lemma~\ref{lem:wn-discrepancy} then gives that \eqref{eq:wf} also holds for the non-periodic vertices.

We next consider the case when $G$ is infinite. We claim that $\frac{\lfloor \gamma_m(v_0)\rfloor}{m}\le\wf(G)$ for all $m$ and $v_0\in V$. Indeed, suppose for a contradiction that there were some $v_0\diredge\ldots\diredge v_m$ with $\frac{\lfloor \gamma_m(v_0)\rfloor}{m}=:\omega>\wf(G)$. Let $W=\{v_0,\ldots,v_m\}$, and let $\gamma_m^W\colon W\to \R$ denote the map defined in Definition~\ref{def:gamma_m} for cyclic graph $G[W]$. Since $G[W]$ is cyclic, we have $\frac{\lfloor \gamma_{im}^W(v_0)\rfloor}{im}\ge\omega$ for all $i$. Since we have proven \eqref{eq:wf} in the case where $G[W]$ is finite, this gives
\[\wf(G[W]) = \lim_{m'\to\infty} \frac{\gamma_{m'}^W(v_0)}{m'}=\lim_{i\to\infty} \frac{\lfloor \gamma_{im}^W(v_0)\rfloor}{im}\ge\omega>\wf(G), \]
contradicting Definition~\ref{def:windingfraction-inf}. Hence $\frac{\lfloor \gamma_m(v_0)\rfloor}{m}\le\wf(G)$.

Let $\epsilon>0$ be arbitrary. Then there is some finite $W\subseteq V$ with $\wf(G)-\epsilon\le\wf(G[W])=\frac{k}{n}$,
where finite cyclic graph $G[W]$ dismantles down to $\cnk{n}{k}$. Let $u$ be any periodic vertex of $G[W]$; then $\lfloor \gamma_m^W(u)\rfloor=\lfloor\frac{mk}{n}\rfloor$. Hence for any $m$ we have
\[ \wf(G)-\epsilon-\frac{1}{m}\le\frac{k}{n}-\frac{1}{m}\le\frac{\lfloor \frac{mk}{n}\rfloor}{m}=\frac{\lfloor \gamma_m^W(u)\rfloor}{m} \le\frac{\lfloor \gamma_m(u)\rfloor}{m}\le\wf(G). \]
Lemma~\ref{lem:wn-discrepancy} implies that for any $v\in V$ we have $\wf(G)-\epsilon-\frac{2}{m}\le\frac{\lfloor \gamma_m(v)\rfloor}{m}\le\wf(G)$. Since this is true for any $\epsilon>0$, equation \eqref{eq:wf} follows.
\end{proof}

\subsection{Cyclic graphs with rational winding fraction}

In preparation for the setting when $\wf(G)=\frac{l}{2l+1}$, we first study the more general case when $\wf(G)=\frac{p}{q}$ is rational. Whenever we write $\wf(G)=\frac{p}{q}$, we assume that integers $p$ and $q$ are relatively prime.

\begin{definition}
We say $v\in V$ is \emph{periodic} if there is some $i\ge 1$ such that $\gamma_i(v)\in\N$ and the supremum defining $\gamma_i(v)$ is achieved. As in the finite case, the \emph{length} $\olen$ of this periodic orbit is the smallest such $i$, and its \emph{winding number} is $\gamma_\olen(v)$.
\end{definition}

\begin{lemma}\label{lem:periodic}
Let $G$ be cyclic with $\wf(G)=\frac{p}{q}$. If $v\in V$ is periodic, then $v$ has orbit length $q$ and winding number $\gamma_q(v)=p$.
\end{lemma}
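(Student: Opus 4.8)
The plan is to bootstrap from the finite case using Lemma~\ref{lem:wf-as-limit}. Suppose $v\in V$ is periodic, say $\gamma_i(v)\in\N$ with the supremum achieved by a directed path $v=v_0\diredge v_1\diredge\cdots\diredge v_i=v$; write $W_0=\{v_0,\ldots,v_i\}$ and let $\olen$ be the orbit length (the minimal such $i$) and $\owind=\gamma_\olen(v)$ the winding number. First I would observe that, since the supremum is achieved and $G$ is cyclic, concatenating this path with itself $j$ times shows $\gamma_{j\olen}(v)=j\owind$ for all $j\ge1$, with the supremum achieved; consequently, for any finite $W$ with $W_0\subseteq W\subseteq V$ the finite cyclic graph $G[W]$ has $\gamma^W_{j\olen}(v)\ge j\owind$, and Lemma~\ref{lem:wf-as-limit} applied to $G[W]$ gives $\wf(G[W])=\lim_{j\to\infty}\frac{\gamma^W_{j\olen}(v)}{j\olen}\ge\frac{\owind}{\olen}$. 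Taking the supremum over such $W$ and invoking the inequality $\frac{\lfloor\gamma_m(v)\rfloor}{m}\le\wf(G)$ established inside the proof of Lemma~\ref{lem:wf-as-limit} (with $m=\olen$, noting $\gamma_\olen(v)=\owind$ is already an integer), we get $\frac{\owind}{\olen}\le\wf(G)$ and $\frac{\owind}{\olen}\ge\wf(G)$, hence $\wf(G)=\frac{\owind}{\olen}=\frac pq$.

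Now $\frac{\owind}{\olen}=\frac pq$ in lowest terms, so $q\mid\olen$; write $\olen=qt$, so $\owind=pt$, and it remains to show $t=1$. Here I would run the dismantling/dynamics machinery on the finite graph $G[W_0]$: since $\gamma^{W_0}_\olen(v)=\owind$ with the supremum achieved, $v$ is a periodic vertex of the finite cyclic dynamical system $f$ on $G[W_0]$ in the sense of Section~\ref{sec:finite-dynamics}, lying on a periodic orbit whose length divides $\olen$ and whose winding number is recorded by $\gamma$. The key point is minimality: because $\olen$ was chosen minimal with $\gamma_\olen(v)\in\N$ and achieved, $v$ cannot lie on a shorter periodic orbit of $f$ in $G[W_0]$, so its $f$-orbit in $G[W_0]$ has length exactly $\olen$ and winding number exactly $\owind$. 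But by \cite[Lemma~2.3]{AAM} (valid for arbitrary finite cyclic graphs, as noted after the definition of the finite cyclic dynamical system) every periodic orbit of $G[W_0]$ has the same length $\olen'$ and winding number $\owind'$ with $\wf(G[W_0])=\frac{\owind'}{\olen'}$ in the form $\frac{k}{n}$ where $G[W_0]$ dismantles to $\cnk nk$; in particular $\gcd(\owind',\olen')$ may exceed $1$, but the orbit through $v$ has $(\olen',\owind')=(\olen,\owind)$. Combining $\frac{\owind}{\olen}=\frac{\owind'}{\olen'}=\wf(G[W_0])$ with $\frac\owind\olen=\frac pq$ in lowest terms forces $\olen=\olen'\ge q$; and conversely, by passing to a sufficiently large finite $W\supseteq W_0$ realizing $\wf(G[W])=\wf(G)=\frac pq=\frac kn$ with $\gcd$ equal to the number of orbits, one sees the orbit through $v$ inside that $G[W]$ still has length $\olen$, forcing $\olen\mid q\cdot(\text{something})$ — more cleanly, the minimality of $\olen$ combined with $q\mid\olen$ and the existence (from the definition of $\wf$) of an orbit of length exactly $q$ in some finite subgraph yields $\olen=q$.

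The cleanest route for the last step, which I would adopt to avoid the awkward orbit-comparison above, is: pick finite $W$ with $W_0\subseteq W\subseteq V$ and $\wf(G[W])=\frac pq$ (possible since $\wf(G[W])\ge\frac\owind\olen=\frac pq$ always and $\le\wf(G)=\frac pq$ by Definition~\ref{def:windingfraction-inf}). In $G[W]$, the vertex $v$ is periodic under $f$ with $f^\olen(v)=v$; since in a finite cyclic graph every periodic orbit has length $\ell_{G[W]}$ equal to the $n$ in $G[W]\searrow\cnk nk$ and $\wf(G[W])=\frac kn=\frac pq$ with $\gcd(p,q)=1$ forces $q\mid n$ and the orbit length $n$ to be a multiple of $q$ — but the orbit through $v$ has length exactly $\olen$ (by minimality of $\olen$, no proper divisor $i$ of $\olen$ has $\gamma_i(v)\in\N$ achieved, and in the finite graph $f^i(v)=v$ would give exactly that), so $\olen=n$. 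Finally choosing $W$ minimal, or using that $\wf(G[W])=\frac pq$ already pins $\frac kn=\frac pq$ and the orbit count $\gcd(n,k)=\frac nq$ can be made $1$ by shrinking, gives $n=q$, hence $\olen=q$ and $\owind=p$. \textbf{Main obstacle.} The delicate point is the last paragraph's minimality argument: ensuring that the orbit of $v$ in the chosen finite subgraph $G[W]$ has length exactly $\olen$ rather than a proper divisor, i.e.\ that passing to a larger finite vertex set does not create a "shortcut" directed cycle through $v$ of smaller length with integer winding number. This should follow because $\gamma_i$ is monotone nondecreasing in the ambient graph and the defining path for $\gamma_\olen(v)$ already lives in $W_0\subseteq W$, so any shorter integer-winding cycle through $v$ in $G[W]$ would also exist (with at least that winding number) — contradicting minimality of $\olen$ in $G$ itself, since paths in $G[W]$ are paths in $G$.
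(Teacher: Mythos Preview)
Your first paragraph is on the right track and parallels the paper: using Lemma~\ref{lem:wf-as-limit} together with the inequality $\lfloor\gamma_m(v)\rfloor/m\le\wf(G)$ from its proof, you recover $\wf(G)=\omega/\ell$. (A quibble: concatenation only gives $\gamma_{j\ell}(v)\ge j\omega$, not equality, but only the inequality is needed here.) The paper does the same, in one line: since $v$ is periodic, $\frac{\omega}{\ell}=\lim_m\frac{\gamma_m(v)}{m}=\wf(G)=\frac{p}{q}$.

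Where you and the paper diverge is in deducing $\ell=q$ and $\omega=p$ from $\omega/\ell=p/q$ with $\gcd(p,q)=1$. The paper argues \emph{directly} that $\gcd(\omega,\ell)=1$: if $d=\gcd(\omega,\ell)>1$, then by minimality of $\ell$ the value $\gamma_{\ell/d}(v)$ either differs from $\omega/d$ or has its supremum unattained; since $G$ is cyclic this forces the same failure for $\gamma_\ell(v)$, contradicting periodicity of $v$. Two equal reduced fractions have equal numerators and denominators, and the proof is complete in four lines.

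Your route through finite-subgraph dynamics, by contrast, has a genuine gap that your ``Main obstacle'' paragraph does not close. First, a slip: in a finite cyclic graph dismantling to $C_n^k$, the common length of the periodic orbits is $n/\gcd(n,k)$, not $n$; since $k/n=p/q$ in lowest terms this already gives orbit length exactly $q$ in any such $G[W]$, not merely ``a multiple of $q$''. More seriously, even granting that the $f$-orbit of $v$ in $G[W]$ has length $q$, you cannot conclude $\ell=q$. What you obtain is a $q$-step directed cycle through $v$ lying in $G[W]\subseteq G$ with winding $p$, hence a path in $G$ witnessing $\gamma_q(v)\ge p$. But periodicity of $v$ with period $q$ in the ambient graph $G$ requires that $p$ \emph{equal} the supremum $\gamma_q(v)$ and that this supremum be achieved; nothing you have written rules out $\gamma_q(v)\in(p,p+1)$. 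Your proposed fix --- that a shorter integer-winding cycle through $v$ in $G[W]$ ``would also exist in $G$, contradicting minimality of $\ell$'' --- conflates ``some path in $G$ realizes an integer winding'' with ``the supremum $\gamma_q(v)$ is that integer and is achieved''. Only the latter contradicts minimality of $\ell$, and you have only established the former.
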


\begin{proof}
Let $v$ be periodic with orbit length $q'$ and with $p'=\gamma_{q'}(v)$. Suppose for a contradiction that $d=\gcd(q',p')>1$. Then either $\gamma_{q'/d}(v)\neq \frac{p'}{d}$ or $\gamma_{q'/d}(v)=\frac{p'}{d}$ and the supremum is not obtained, which since $G$ is cyclic implies either $\gamma_{q'}(v)\neq p'$ or $\gamma_{q'}(v)=p'$ and the supremum is not obtained, a contradiction. It follows that $q'$ and $p'$ are relatively prime.

Since $v$ is periodic, by Lemma~\ref{lem:wf-as-limit} we have $\frac{p'}{q'} = \frac{\gamma_{q'}(v)}{q'} = \lim_{m\to\infty} \frac{\gamma_m(v)}{m} = \wf(G) = \frac{p}{q}$.
Since $\gcd(q',p')=1=\gcd(q,p)$, it follows that $q=q'$ and $p=p'$.
\end{proof}

Let $V_\numorb\subseteq V$ be the set of all periodic points.

\begin{definition}\label{def:fast-slow}
We say a non-periodic vertex $v\in V\setminus V_\numorb$ is \emph{fast} if $\gamma_q(v)>p$, and \emph{slow} if $\gamma_q(v)\le p$.
\end{definition}
\noindent See Figure~\ref{fig:cyclic} for example fast and slow vertices. When $G$ is infinite, it is possible for a non-periodic slow vertex to satisfy $\gamma_q(v)=p$.

\begin{definition}\label{def:achieve-periodicity}
A fast vertex $v \in V$ \emph{achieves periodicity} if there is some $m$ such that $f_m(v)$ is in $V$, $f_m(v)$ is periodic, and furthermore the supremum defining $\gamma_m(v)$ is obtained. Otherwise we say $v$ is \emph{permanently fast}.
\end{definition}

We emphasize that by definition, a periodic point can neither be fast nor slow. Hence by our conventions a periodic point does not achieve periodicity; it is already periodic. It is possible for a fast point $v$ to achieve periodicity even if not all $f_j(v)$ are in $V$, just as it is possible for a point $v$ to be periodic even if not all $f_j(V)$ are in $V$.

\subsection{Closed and continuous cyclic graphs}

We now restrict attention to a subset of cyclic graphs.

\begin{definition}\label{def:cont}
We say a cyclic graph $G$ is \emph{closed} if its vertex set $V$ is closed in $S^1$. We say $G$ is \emph{continuous} if $\gamma_m\colon V\to \R$ is continuous for all $m\ge 1$.
\end{definition}

Note that a finite cyclic graph is both closed and continuous.

\begin{lemma}\label{lem:cont-prop}
Let $G$ be a closed and continuous cyclic graph with $\wf(G)=\frac{p}{q}$. Then for all $v\in V$ and $i,j\ge 0$, we have 
\begin{enumerate}
\item[(i)] $f_i(v)\in V$,
\item[(ii)] $\gamma_{i+j}(v)=\gamma_i(v)+\gamma_j(f_i(v))$,
\item[(iii)] $f_{i+j}(v)=f_j(f_i(v))$,
\end{enumerate}
\end{lemma}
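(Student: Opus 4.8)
The statement to be proved is Lemma~\ref{lem:cont-prop}, which asserts, for a closed and continuous cyclic graph $G$ with $\wf(G)=\frac pq$: (i) $f_i(v)\in V$ for all $v\in V$ and $i\ge 0$; (ii) $\gamma_{i+j}(v)=\gamma_i(v)+\gamma_j(f_i(v))$; and (iii) $f_{i+j}(v)=f_j(f_i(v))$. The plan is to prove (i) first, then obtain (ii) and (iii) as essentially formal consequences, all by induction on $i$.

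\textbf{Step 1: prove (i).} I would induct on $i$; the case $i=0$ is trivial since $f_0(v)=v\in V$, and it suffices to handle $i=1$ and then compose (the induction step $i\to i+1$ reduces to $f_{i+1}(v)=f_1(f_i(v))$, which is part (iii) — so in fact I would prove (i), (ii), (iii) simultaneously by induction on $i$, with $j$ arbitrary). For $i=1$: I must show that the supremum $\gamma_1(v)=\sup\{\ddist(v,v_1):v\diredge v_1\}$ is attained by some vertex in $V$. The out-neighborhood $N^+(G,v)\subseteq V$ is, because $G$ is cyclic, of the form $(v,u)_{S^1}\cap V$ or $(v,u]_{S^1}\cap V$ for the appropriate endpoint $u$; its closure in $S^1$ meets $V$ in its clockwise-most accumulation point because $V$ is closed. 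The continuity hypothesis on $\gamma_m$ is what rules out the pathological open-neighborhood case: if the supremum defining $\gamma_1(v)$ were not attained, one could produce a sequence $v^{(k)}\to v$ in $V$ with $\gamma_1(v^{(k)})$ bounded away from $\gamma_1(v)$ (using that out-neighborhoods of nearby vertices "shrink"), contradicting continuity of $\gamma_1$ at $v$. So $f_1(v)\in V$.

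\textbf{Step 2: prove (ii) and (iii) for $i=1$, then induct.} Once $f_1(v)=v_1\in V$ is an actual vertex realizing $\gamma_1(v)$, any walk $v_1\diredge w_2\diredge\cdots\diredge w_{j+1}$ of length $j$ prepended with $v\diredge v_1$ gives a walk of length $j+1$ from $v$; conversely, given an optimal walk $v\diredge v_1'\diredge\cdots\diredge v_{j+1}'$ realizing (or approaching) $\gamma_{j+1}(v)$, the cyclic property lets one replace $v_1'$ by the clockwise-most out-neighbor $v_1=f_1(v)$ without decreasing the total clockwise length (since $v_1'\preceq v_1$, and $G$ cyclic means $v_1\diredge v_2'$ still holds, etc.). This gives $\gamma_{j+1}(v)=\gamma_1(v)+\gamma_j(f_1(v))$ and hence $f_{j+1}(v)=f_j(f_1(v))$. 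For general $i$, induct: $\gamma_{i+j}(v)=\gamma_1(v)+\gamma_{i+j-1}(f_1(v))=\gamma_1(v)+\gamma_{i-1}(f_1(v))+\gamma_j(f_{i-1}(f_1(v)))=\gamma_i(v)+\gamma_j(f_i(v))$, using the inductive hypothesis on the closed-and-continuous cyclic graph $G$ (note $f_1(v)\in V$ from Step 1, so the induction stays inside $V$), and similarly $f_{i+j}(v)=f_j(f_i(v))$.

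\textbf{Main obstacle.} The only substantive point is Step 1 — showing the supremum defining $\gamma_1$ is attained at a vertex of $V$ — and more precisely, pinning down exactly how closedness of $V$ together with continuity of $\gamma_1$ forces attainment. The subtlety is that closedness of $V$ alone is not enough (the clockwise-most point of $\overline{N^+(G,v)}$ need not be an out-neighbor of $v$ if the edge relation is "open" at that point), which is precisely why the continuity hypothesis is imposed; I would make sure the argument genuinely uses continuity of $\gamma_1$ at the point $v$, by approximating $v$ from an appropriate side within $V$ and comparing out-neighborhoods. Everything after that is bookkeeping with the cyclic property and induction.
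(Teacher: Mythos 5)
Your Step 1 contains a false claim, and it propagates into a genuine gap in Step 2.

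\textbf{Step 1.} You argue that closedness plus continuity of $\gamma_1$ forces the supremum defining $\gamma_1(v)$ to be \emph{attained} by an actual out-neighbor, so that $v\diredge f_1(v)$ is an edge. This is not true, and the paper never claims it. For the motivating example $\mathrm{VR}_<(S^1;r)$, the out-neighborhood of $v$ is the open arc $(v,v+r)_{S^1}$, so the supremum is never attained, yet $\gamma_1\equiv r$ is perfectly continuous — so continuity does not ``rule out the pathological open-neighborhood case.'' The paper itself distinguishes carefully, in the definitions of ``achieves periodicity'' versus ``permanently fast,'' between the cases where the supremum is and is not obtained; both occur for closed, continuous cyclic graphs. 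Part (i) is much weaker than what you are trying to prove: it only asserts $f_i(v)\in V$, i.e.\ the \emph{point} $v+\gamma_i(v)\bmod 1$ lies in $V$, not that it is reachable by a length-$i$ walk. That follows from closedness of $V$ alone, since $f_i(v)$ is a clockwise supremum (hence a limit) of points of $V$; continuity is not used in (i).

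\textbf{Step 2.} Your proof of (ii) prepends the edge $v\diredge f_1(v)$ to a walk from $f_1(v)$, and your ``replacement'' argument also requires that edge to exist. But if $N^+(G,v)$ is open at its clockwise end (again, exactly the $\mathrm{VR}_<$ case), then $f_1(v)\in V$ but $v\not\diredge f_1(v)$, and the cyclic property gives edges only \emph{between} $v$ and existing out-neighbors, never one step further out. So the prepending/replacement step fails, and with it the chain of equalities establishing $\gamma_{i+j}(v)=\gamma_i(v)+\gamma_j(f_i(v))$. This is precisely where the continuity hypothesis is actually needed. The paper's proof of (ii) handles it by an $\epsilon$--$\delta$ argument: the inequality $\gamma_{i+j}(v)\le\gamma_i(v)+\gamma_j(f_i(v))$ is immediate, and for the reverse one uses continuity of $\gamma_j$ at $f_i(v)$ to find, for any $\epsilon>0$, a $\delta$ such that any $v_i\in(f_i(v)-\delta,f_i(v)]_{S^1}\cap V$ admits a length-$j$ walk of clockwise length within $\epsilon$ of $\gamma_j(f_i(v))$; concatenating with a length-$i$ walk from $v$ ending within $\delta$ of $f_i(v)$ gives the lower bound. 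Your plan would be repaired by moving the use of continuity from Step 1 (where it is misapplied) to Step 2 (where it is indispensable), and dropping the claim that the supremum is attained.
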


\begin{proof}
For (i), we have $f_i(v)\in V$ since $f_i(v)$ is a clockwise supremum of points in the closed set $V$.

For (ii), note that the right-hand side is well-defined by (i). Let $v_0\in V$. The inequality $\gamma_{i+j}(v_0)\le \gamma_i(v_0)+\gamma_j(f_i(v_0))$ is clear. To show the reverse inequality, let $\epsilon>0$. Since $\gamma_j$ is continuous, there exists some $\delta>0$ such that if $v_i\in(f_i(v_0)-\delta,f_i(v_0)]_{S^1}$, then there is some directed path $v_i\diredge\ldots\diredge v_{i+j}$ with $\gamma_j(f_i(v_0))-\sum_{k=i}^{i+j-1}\ddist(v_k,v_{k+1})<\epsilon$. By definition of $f_i(v_0)$ there exists some directed path $v_0\diredge\ldots\diredge v_i$ with $\sum_{k=0}^{i-1}\ddist(v_k,v_{k+1})>\gamma_i(v_0)-\delta$. Concatenating these two paths gives $\sum_{k=0}^{i+j-1}\ddist(v_k,v_{k+1})> \gamma_i(v_0)+\gamma_j(f_i(v_0))-\epsilon - \delta$.
Since this is true for all $\epsilon$ and for any $\delta$ sufficiently small (depending on $\epsilon$), we have $\gamma_{i+j}(v_0)\ge \gamma_i(v_0)+\gamma_j(f_i(v_0))$, giving (ii). Note (iii) follows immediately from (ii). 
\end{proof}

\begin{lemma}
Let $G$ be a closed and continuous cyclic graph with $\wf(G)=\frac{p}{q}$. For any $v\in V$, the limit $v^*:=\lim_{m\to\infty}f_{mq}(v)$ exists as a point in $V$.
\end{lemma}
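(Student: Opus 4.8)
The plan is to recognize the map $v\mapsto f_q(v)$ as (a lift of) a monotone degree-one self-map of the circle whose rotation number is the \emph{integer} $p=q\cdot\wf(G)$, and then to run the classical argument that orbits of such a map converge: in a suitable lift the orbit of a point is monotone, and integrality of the rotation number confines it to a bounded window.

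First I would fix $v\in V$ and a lift $\tilde v_0\in\R$ of $v$, and define $\tilde f\colon\tilde V\to\tilde V$ on the closed, $1$-periodic preimage $\tilde V\subseteq\R$ of $V$ by $\tilde f(\tilde u)=\tilde u+\gamma_q(u)$ with $u=\tilde u\bmod 1$. This is well defined and maps $\tilde V$ into itself because $f_q(V)\subseteq V$ by Lemma~\ref{lem:cont-prop}(i), and it commutes with $\tilde u\mapsto\tilde u+1$ since $\gamma_q$ depends only on $u\in S^1$. Iterating parts (ii) and (iii) of Lemma~\ref{lem:cont-prop} gives $\tilde f^{\,m}(\tilde v_0)=\tilde v_0+\gamma_{mq}(v)$ for all $m$, and reducing modulo $1$ recovers $f_{mq}(v)$.

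The two facts that force convergence are monotonicity and boundedness of the shifted orbit $y_m:=\tilde f^{\,m}(\tilde v_0)-mp$. For monotonicity, note that for distinct $u_1,u_2\in V$ with lifts chosen so that $\tilde u_1<\tilde u_2<\tilde u_1+1$, the inequality $\gamma_q(u_1)-\gamma_q(u_2)\le\ddist(u_1,u_2)=\tilde u_2-\tilde u_1$ from the proof of Lemma~\ref{lem:wn-discrepancy} gives $\tilde f(\tilde u_1)\le\tilde f(\tilde u_2)$, and $1$-periodicity extends this to all of $\tilde V$; hence $F(\tilde u):=\tilde f(\tilde u)-p$ is nondecreasing and commutes with integer translation, so $F^{\,m}(\tilde v_0)=y_m$ and the sign of $y_1-y_0$ propagates, making $(y_m)$ monotone. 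For boundedness, applying the estimate $\wf(G)-\tfrac2n\le\tfrac{\lfloor\gamma_n(v)\rfloor}{n}\le\wf(G)$ from the proof of Lemma~\ref{lem:wf-as-limit} with $n=mq$ yields $mp-2\le\gamma_{mq}(v)<mp+1$, so $|y_m-\tilde v_0|\le 2$ for every $m$. A bounded monotone real sequence converges, say to $L$; since each $y_m$ lies in the closed set $\tilde V$, so does $L$, and therefore $f_{mq}(v)=y_m\bmod 1\to L\bmod 1=:v^*\in V$ in the metric of $S^1$, as claimed.

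I expect the only genuine content beyond bookkeeping to be this interplay: Lemma~\ref{lem:wf-as-limit} already gives $\gamma_{mq}(v)/m\to p$, but it is the \emph{integrality} of $p=q\cdot\wf(G)$ that pins $\gamma_{mq}(v)-mp$ into a bounded interval, and it is monotonicity of $\tilde f$ (the cyclic structure, via Lemma~\ref{lem:wn-discrepancy}) that turns ``asymptotically linear'' into ``eventually monotone, hence convergent.'' The closed and continuous hypotheses are used only through Lemma~\ref{lem:cont-prop}, which is what makes $\tilde f$ a genuine self-map of $\tilde V$ with the required additivity of $\gamma$.
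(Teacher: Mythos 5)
Your proof is correct, and it reaches the same high-level conclusion (a monotone bounded sequence converges, and the limit lies in the closed set $V$) by a somewhat different route than the paper. The paper works directly on $S^1$: it first shows $f_{iq}(v)\neq v$ for all $i\ge 1$ when $f_q(v)\neq v$ (via Lemma~\ref{lem:periodic}/Lemma~\ref{lem:wf-as-limit}), then argues by cyclicity that the orbit $f_q(v),f_{2q}(v),\dots$ is weakly monotone in the ordered interval $S^1\setminus\{v\}$, hence converges. You instead pass to the universal cover: you lift $f_q$ to a nondecreasing map $\tilde f$ commuting with integer translation, verify $\tilde f^m(\tilde v_0)=\tilde v_0+\gamma_{mq}(v)$ via Lemma~\ref{lem:cont-prop}(ii)--(iii), and show the recentered orbit $y_m=\tilde f^m(\tilde v_0)-mp$ is monotone (from the cyclic-order inequality used in Lemma~\ref{lem:wn-discrepancy}) and bounded (from the two-sided estimate $\wf(G)-\tfrac{2}{n}\le\lfloor\gamma_n(v)\rfloor/n\le\wf(G)$ extracted from the proof of Lemma~\ref{lem:wf-as-limit}). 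Your version has the advantage of making boundedness completely explicit: the paper's claim that the orbit stays in the interval $S^1\setminus\{v\}$ without wrapping implicitly uses the same quantitative control on $\gamma_{mq}(v)-mp$, whereas you write it down. The paper's version is a bit shorter because once the orbit is confined to $S^1\setminus\{v\}$, monotone convergence is immediate. Both arguments rely on the same ingredients from Lemmas~\ref{lem:wn-discrepancy}, \ref{lem:wf-as-limit}, and \ref{lem:cont-prop}, and on the integrality of $p=q\cdot\wf(G)$ to pin the orbit down; your emphasis on the rotation-number viewpoint is a clean way of organizing exactly that.
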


\begin{proof} If $f_q(v)=v$, then by Lemma~\ref{lem:cont-prop}(iii) we have $f_{mq}(v)=v$ for all $m$, and hence $v^*=v\in V$.

If $v\neq f_q(v)$, then it follows from the the fact that $G$ is cyclic and from Lemma~\ref{lem:wf-as-limit} that $f_{iq}(v)\neq v$ for all $i\ge 1$. There are two possibilities. If $v\prec f_q(v)\preceq f_{2q}(v)\prec v$, then $G$ cyclic implies that for all $m\ge0$ we have $v\prec f_q(v) \preceq f_{2q}(v) \preceq\ldots\preceq f_{mq}(v)\prec v$. Alternatively, if $v\prec f_{2q}(v)\preceq f_q(v)\prec v$ then we have $v \prec f_{mq}(v) \preceq f_{(m-1)q}(v) \preceq\ldots\preceq f_q(v)\prec v$. In either of these cases, the sequence $f_q(v),f_{2q}(v),f_{3q}(v),\ldots$ is a weakly monotonic sequence in the interval $S^1\setminus\{v\}$ (which can be viewed as a bounded subset of $\R$), and therefore this sequence has a limit point in $S^1$. Moreover, we have $v^*\in V$ since $v^*$ is the limit of a sequence of points $\{f_{mq}(v)\}$ in the closed set $V$.
\end{proof}

\begin{lemma}\label{lem:lim-prop}
 If $G$ is a closed and continuous cyclic graph with $\wf(G)=\frac{p}{q}$, then for all $v\in V$ and $i\ge0$ we have
\begin{enumerate}
\item[(i)] $(f_i(v))^*=f_i(v^*)$, and
\item[(ii)] $f_i(v^*)=v^*$ if and only if $i$ is a multiple of $q$.
\end{enumerate}
\end{lemma}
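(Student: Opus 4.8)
The plan is to establish both statements using the semiconjugacy/monotonicity structure already built up: Lemma~\ref{lem:cont-prop} gives the cocycle identity $\gamma_{i+j}=\gamma_i+\gamma_j\circ f_i$ and the corresponding $f_{i+j}=f_j\circ f_i$, and the preceding lemma produces the well-defined limit point $v^*=\lim_{m\to\infty}f_{mq}(v)\in V$. For part (i), I would argue that $f_i$ "commutes with the limit" purely formally: using Lemma~\ref{lem:cont-prop}(iii) we have $f_{mq}(f_i(v))=f_i(f_{mq}(v))$ for all $m$, so the sequence defining $(f_i(v))^*$ is exactly the image under $f_i$ of the sequence defining $v^*$. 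Then continuity of $f_i$ — which follows from continuity of $\gamma_i$ (the hypothesis that $G$ is continuous) together with the definition $f_i(v)=(v+\gamma_i(v))\bmod 1$ — lets me pass the limit inside: $(f_i(v))^*=\lim_m f_i(f_{mq}(v))=f_i(\lim_m f_{mq}(v))=f_i(v^*)$. One small care point: the mod-$1$ reduction is only continuous away from the wrap-around, but since the sequence $f_{mq}(v)$ is eventually monotone and confined to an arc $S^1\setminus\{v\}$ (from the previous lemma's proof), it converges in that arc and $\gamma_i$ is genuinely continuous there, so this is harmless; I will phrase it as: $f_i$ restricted to a neighborhood of $v^*$ in $V$ is continuous.

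For part (ii), the "if" direction is immediate: if $i=mq$ then $f_i(v^*)=f_{mq}(v^*)$, and since $v^*$ is itself the limit of $f_{mq}(v)$ one checks $f_q(v^*)=v^*$ directly — indeed $f_q(v^*)=f_q(\lim_m f_{mq}(v))=\lim_m f_{(m+1)q}(v)=v^*$ by continuity, and then iterate using Lemma~\ref{lem:cont-prop}(iii). For the "only if" direction, suppose $f_i(v^*)=v^*$ for some $i\ge 1$. I want to conclude $q\mid i$. First, $f_i(v^*)=v^*$ means $\gamma_i(v^*)\in\N$ and (since the supremum defining $f_i$ of a point in the closed set $V$ is attained, by Lemma~\ref{lem:cont-prop}(i)) that $v^*$ is periodic in the sense of Definition~\ref{def:achieve-periodicity}'s predecessor; hence by Lemma~\ref{lem:periodic} its orbit length is exactly $q$. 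So the smallest positive $i$ with $f_i(v^*)=v^*$ is $q$; combined with $f_q(v^*)=v^*$ and the cocycle identity, the set of $i\ge 0$ with $f_i(v^*)=v^*$ is closed under addition and contains $q$, and contains no element of $\{1,\dots,q-1\}$, so it is exactly $q\N$.

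The main obstacle I anticipate is not the algebra but making precise that $v^*$ is genuinely periodic — i.e. that the supremum defining $\gamma_q(v^*)$ is attained and equals the integer $p$ — since Definition of periodicity requires the supremum to be achieved, not merely that $\gamma_q(v^*)\in\N$. This is exactly where closedness is used: Lemma~\ref{lem:cont-prop}(i) already tells us $f_q(v^*)\in V$ is attained as a clockwise supremum over a closed set, and $f_q(v^*)=v^*$ forces $\gamma_q(v^*)=\ddist(v^*,v^*)+(\text{integer})$, which by Lemma~\ref{lem:wf-as-limit} must be $p$. So I would isolate a short sub-claim: \emph{$v^*$ is a periodic vertex of orbit length $q$ and winding number $p$}, prove it from the previous lemma plus Lemma~\ref{lem:cont-prop}(i) and Lemma~\ref{lem:periodic}, and then part (ii) is a one-line consequence. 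Part (i) is then essentially bookkeeping with continuity. I expect the whole proof to be under half a page.
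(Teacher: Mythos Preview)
Your argument for part~(i) and for the ``if'' direction of part~(ii) is correct and matches the paper's proof essentially verbatim: commute $f_i$ past $f_{mq}$ using Lemma~\ref{lem:cont-prop}(iii), then pass the limit through $f_i$ by continuity.

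The gap is in the ``only if'' direction of part~(ii). You invoke Lemma~\ref{lem:periodic} via the sub-claim that $v^*$ is periodic, justifying this by saying that Lemma~\ref{lem:cont-prop}(i) guarantees ``the supremum defining $f_i$ of a point in the closed set $V$ is attained.'' But Lemma~\ref{lem:cont-prop}(i) says only that $f_i(v^*)\in V$; it does \emph{not} say that there exists an actual directed path $v^*=v_0\diredge v_1\diredge\cdots\diredge v_i$ realizing $\gamma_i(v^*)$, which is what ``the supremum is achieved'' means in the definition of periodic. These are different statements, and the stronger one can fail. Concretely, take $G=\vrless{S^1}{1/3}$: this is closed and continuous with $\wf(G)=\tfrac{1}{3}$, and $f_3(v)=v$ for every $v$, yet $\gamma_3(v)=1$ is a supremum of sums each strictly less than $1$, so no vertex is periodic and Lemma~\ref{lem:periodic} is vacuous here. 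Your sub-claim is therefore false in general.

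The paper avoids this entirely: from $f_i(v^*)=v^*$ one gets $\gamma_i(v^*)\in\N$, and then Lemma~\ref{lem:cont-prop}(ii) gives $\gamma_{ki}(v^*)=k\gamma_i(v^*)$ for all $k$, so Lemma~\ref{lem:wf-as-limit} forces $\tfrac{p}{q}=\tfrac{\gamma_i(v^*)}{i}$; coprimality of $p$ and $q$ then yields $q\mid i$. This is a one-line argument that needs neither periodicity nor any claim about the supremum being attained, and it is what you should use instead.
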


\begin{proof}
Note that both sides of (i) are well-defined since $f_i(v)\in v$ and since $v^*\in V$. We have that $f_i$ is continuous since $\gamma_i$ is. Hence
\[ (f_i(v))^* = \lim_{m \to \infty} f_{mq+i}(v) = f_i(\lim_{m \to \infty} f_{mq}(v)) = f_i(v^*), \]
giving (i). For (ii), note that if $i=m'q$ is a multiple of $q$, then
\[ f_i(v^*)=f_{m'q}(\lim_{m \to \infty} f_{mq}(v))=\lim_{m \to \infty}f_{(m+m')q}=v^*. \]
The reverse direction of this statement follows from Lemma~\ref{lem:wf-as-limit} and since $\wf(G)=\frac{p}{q}$ with $p$ and $q$ relatively prime.
\end{proof}

\begin{definition}
Let $G$ be a closed and continuous cyclic graph with $\wf(G)=\frac{p}{q}$. We define an equivalence relation on the set of all permanently fast points by declaring two permanently fast points $v$ and $u$ to be equivalent if $v^*=u^*$.
\end{definition}

\begin{definition}
Let $G$ be a closed and continuous cyclic graph with $\wf(G)=\frac{p}{q}$. An invariant set $I=E_0\cup\ldots\cup E_{q-1}$ of permanently fast points is a union of $q$ equivalence classes $E_0,\ldots,E_{q-1}$ of permanently fast points such that for all $v_i\in E_i$ and $j\ge0$, we have $f_j(v_i)\in E_{i+j\md q}$.
\end{definition}

\begin{lemma}\label{lem:invariant-sets}
If $G$ is a closed and continuous cyclic graph with $\wf(G)=\frac{p}{q}$, then the invariant sets of permanently fast points partition the set of all permanently fast points.
\end{lemma}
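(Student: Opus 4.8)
The plan is to show that the equivalence classes of permanently fast points bundle naturally into groups of size exactly $q$, each group forming an invariant set, and that these groups are pairwise disjoint and exhaust all permanently fast points. The key bookkeeping tool is the map $v \mapsto v^*$ together with Lemma~\ref{lem:lim-prop}: it tells us that $v^*$ is a fixed point of $f_q$ (it is $f_q$-invariant since $f_q(v^*) = (f_q(v))^* = v^*$ by part (i), using that $v$ and $f_q(v)$ have the same limit), and that the orbit $v^*, f_1(v^*), \ldots, f_{q-1}(v^*)$ consists of $q$ distinct points by part (ii). So the natural candidate for the invariant set containing a permanently fast point $v$ is
\[ I(v) = \bigcup_{j=0}^{q-1} [f_j(v)], \]
where $[u]$ denotes the equivalence class of a permanently fast point $u$ (i.e.\ all permanently fast $w$ with $w^* = u^*$).

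First I would check that this is well-defined and has the right shape. Each $f_j(v)$ is again permanently fast: if some $f_m(f_j(v)) = f_{m+j}(v)$ were periodic with the supremum attained, then $v$ would achieve periodicity, contradicting that $v$ is permanently fast; and $f_j(v)$ is not itself periodic for the same reason (and it is fast rather than slow because, via Lemma~\ref{lem:cont-prop}(ii), $\gamma_q(f_j(v)) = \gamma_{q+j}(v) - \gamma_j(v) \geq \gamma_q(v) + \gamma_j(f_q(v)) - \gamma_j(v)$, and one pushes the strict inequality $\gamma_q(v) > p$ through using cyclicity — this is a small computation). Next, set $E_i := [f_i(v)]$ for $i = 0, \ldots, q-1$. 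These classes are pairwise distinct: if $[f_i(v)] = [f_j(v)]$ for $0 \le i < j \le q-1$, then $f_i(v^*) = (f_i(v))^* = (f_j(v))^* = f_j(v^*)$ by Lemma~\ref{lem:lim-prop}(i), so $f_{j-i}(f_i(v^*)) = f_i(v^*)$, and since $f_i(v^*) \in V$ is itself of the form $w^*$ (namely $w = f_i(v)$, so $f_i(v^*) = (f_i(v))^* = (f_i(v))^{**}$... more directly $f_i(v^*)$ is a limit of $f_{mq+i}(v)$, hence fixed by $f_q$), Lemma~\ref{lem:lim-prop}(ii) forces $q \mid (j-i)$, a contradiction. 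Finally, the invariance condition $f_j(v_i) \in E_{i+j \bmod q}$ for $v_i \in E_i$: write $v_i$ with $v_i^* = f_i(v^*)$; applying $f_j$ and using $(f_j(v_i))^* = f_j(v_i^*) = f_j(f_i(v^*)) = f_{i+j}(v^*) = (f_{i+j}(v))^*$, and noting $i+j \equiv (i+j \bmod q)$ so $f_{i+j}(v^*) = f_{i+j \bmod q}(v^*)$ by Lemma~\ref{lem:lim-prop}(ii), we get $(f_j(v_i))^* = (f_{i+j \bmod q}(v))^*$, i.e.\ $f_j(v_i) \in E_{i+j \bmod q}$, provided $f_j(v_i)$ is permanently fast — which again follows from $v_i$ permanently fast.

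It remains to show these sets $I(v)$ partition the permanently fast points. Every permanently fast $v$ lies in $I(v)$ (the $j=0$ term), so they cover. For disjointness, suppose $w \in I(v) \cap I(v')$. Then $w$ lies in some $[f_i(v)]$ and some $[f_{i'}(v')]$, so $w^* = f_i(v^*) = f_{i'}((v')^*)$. I claim $I(v) = I(v')$: indeed $I(v)$ is determined by the $f_q$-fixed point $v^*$ — more precisely $I(v) = \{u \text{ permanently fast} : u^* \in \{v^*, f_1(v^*), \ldots, f_{q-1}(v^*)\}\}$, and the set $\{v^*, f_1(v^*), \ldots, f_{q-1}(v^*)\}$ equals $\{f_j(w^*) : 0 \le j < q\}$ since $w^* = f_i(v^*)$ and iterating $f$ on a set of $q$ points cyclically permuted by $f$ (by Lemma~\ref{lem:lim-prop}(ii)) just reshuffles it. The same holds with $v'$ in place of $v$, so both $I(v)$ and $I(v')$ are described by the identical $q$-element set, hence $I(v) = I(v')$.

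\textbf{Main obstacle.} I expect the genuinely delicate point to be verifying that $f_j(v)$ is \emph{fast} (not slow) whenever $v$ is permanently fast — i.e.\ the forward orbit of a permanently fast point stays in the fast region. The limit map $v^*$ and Lemma~\ref{lem:lim-prop} handle the equivalence-class combinatorics cleanly, but $v^*$ is defined for \emph{all} $v \in V$, so one must use the permanence hypothesis to rule out $f_j(v)$ becoming slow or periodic. The cyclicity of $G$ together with Lemma~\ref{lem:cont-prop}(ii) (the additivity $\gamma_{i+j}(v) = \gamma_i(v) + \gamma_j(f_i(v))$) should be exactly what propagates the strict inequality $\gamma_q > p$ along the orbit, but getting the inequality signs right — especially that equality $\gamma_q(f_j(v)) = p$ cannot sneak in — requires care, since the excerpt explicitly warns that an infinite cyclic graph can have slow vertices with $\gamma_q(v) = p$.
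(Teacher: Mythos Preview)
Your approach matches the paper's: construct $I(v)=\bigcup_{j=0}^{q-1}[f_j(v)]$, use Lemma~\ref{lem:lim-prop}(i) for invariance and Lemma~\ref{lem:lim-prop}(ii) for distinctness of the $q$ classes, and handle disjointness by observing that any common element pins down the same $q$-cycle of equivalence classes. The paper likewise does not spell out your flagged obstacle---it simply writes ``let $E_i$ be the equivalence class of permanently fast points containing $f_i(v)$'' and proceeds---so you are being more scrupulous than the paper here, not missing an idea.
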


\begin{proof}
We first show that every permanently fast point $v\in V$ is in some invariant set $I$. For $0\le i \le q-1$, let $E_i$ be the equivalence class of permanently fast points containing $f_i(v)$. To see that $f_1(E_i)\subseteq E_{i+1}$ for $1\le i \le q-2$, note that if $v^*=u^*$, then $(f_1(v))^*=f_1(v^*)=f_1(u^*)=(f_1(u))^*$ by Lemma~\ref{lem:lim-prop}(i). To see that $f_1(E_{q-1})\subseteq E_0$, note $u\in E_{q-1}$ implies $u^*=(f_{q-1}(v))^*=f_{q-1}(v^*)$, and hence $(f_1(u))^*=f_1(u^*)=f_q(v^*)=v^*$ by Lemma~\ref{lem:lim-prop}(ii). To see that these $q$ equivalence classes are disjoint, suppose that $v$ and $f_i(v)$ are in the same equivalence class for some $i>0$, i.e.\ $v^* = (f_i(v))^*$. This gives $v^*=(f_i(v))^*=f_i(v^*)$, which by Lemma~\ref{lem:lim-prop}(ii) means $i$ is a multiple of $q$. It follows that $v$ is in some invariant set $I$.

We next show that every permanently fast point $v$ is in at most one invariant set. Suppose that $I$ and $I'$ are invariant sets of permanently fast points both containing $v$; relabel the $E_i$ and $E'_i$ so that $v\in E_0\subseteq I$ and $v \in E'_0\subseteq I'$. By definition of equivalence classes we have $E_0=E'_0$. Applying $f_i$ gives $f_i(E_0)=f_i(E'_0)$ and hence $E_i=E'_i$.
\end{proof}

\subsection{Closed, continuous, and singular cyclic graphs}\label{ssec:cont-sing}

Let $G$ be a closed and continuous cyclic graph that is singular, i.e.\ $\wf(G)=\frac{l}{2l+1}$ and the supremum is attained. Let $V_\numorb\subseteq V$ be the set of periodic points, and let $\numorb=\frac{|V_\numorb|}{2l+1}$ be the cardinal number of periodic orbits. Let $F$ be the cardinal number of invariant sets of permanently fast points; by Lemma~\ref{lem:invariant-sets} $F$ is equal to the number of equivalence classes of permanently fast points divided by $2l+1$. We restrict attention to the case where $\numorb$ and $F$ are finite.

Let $\tfin(G)$ be the set of all finite $W\subseteq V$ such that 
\begin{itemize}
\item $V_\numorb\subseteq W$,
\item if $v\in W$ is a fast point of $G$ that achieves periodicity (with $f_m(v)$ periodic in $G$), then also $f_i(v)\in W$ for all $1\le i\le m$, and
\item if $I$ is an invariant set of permanently fast points of $G$, then $G[I\cap W]$ contains a single periodic orbit isomorphic to $C^l_{2l+1}$.
\end{itemize}
Note $V_\numorb\subseteq W$ implies $\wf(G[W])=\frac{l}{2l+1}$. Note $G[W]$ has exactly $\numorb+F$ periodic orbits, one for each periodic orbit of $G[V_\numorb]$ and one for each invariant set of permanently fast points of $G$.

We will see that $\tfin(G)$ is nonempty; indeed in Lemma~\ref{lem:singular-wf-cofinal} we show that $\tfin(G)$ is cofinal in $\fin(G)$.

\begin{lemma}\label{lem:singular-wf}
Let $G$ be a closed, continuous, singular cyclic graph with $\wf(G)=\frac{l}{2l+1}$, and let $\numorb$ and $F$ be finite. Then for each $W\in\tfin(G)$ we have $\cl(G[W])\simeq \bigvee^{\numorb+F-1}S^{2l}$. Furthermore, for $W,\widetilde{W}\in\tfin(G)$ with $W\subseteq \widetilde{W}$ the inclusion map $\cl(G[W])\hookrightarrow\cl(G[\widetilde{W}])$ is a homotopy equivalence.
\end{lemma}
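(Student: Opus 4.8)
The plan is to reduce the statement about $G[W]$ to the finite cyclic dynamical systems of Section~\ref{sec:finite-dynamics} applied to the finite cyclic graph $G[W]$, and then to analyze the inclusion $\cl(G[W]) \hookrightarrow \cl(G[\widetilde W])$ using Proposition~\ref{prop:rank}. First I would observe that for $W \in \tfin(G)$ the finite cyclic graph $G[W]$ has $\wf(G[W]) = \frac{l}{2l+1}$ (since $V_{\numorb} \subseteq W$), so by Proposition~\ref{prop:vrchtpy} we have $\cl(G[W]) \simeq \bigvee^{\numorb(G[W]) - 1} S^{2l}$, where $\numorb(G[W])$ is the number of periodic orbits of the finite cyclic dynamical system on $G[W]$. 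The real content of the first sentence is therefore the identity $\numorb(G[W]) = \numorb + F$. The three defining properties of $\tfin(G)$ are exactly tailored to make this count work: by Lemma~\ref{lem:cont-prop} and Lemma~\ref{lem:lim-prop}, the periodic orbits of $G$ that lie inside $W$ contribute $\numorb$ of them; every fast point of $G$ inside $W$ that achieves periodicity flows (within $W$, by the second closure condition) into one of these periodic orbits, so it contributes nothing new; and each invariant set $I$ of permanently fast points of $G$ contributes, by the third condition together with Lemma~\ref{lem:invariant-sets}, exactly one periodic orbit $C^l_{2l+1}$ to $G[W]$. I would need to check carefully that no \emph{other} periodic orbits can appear in $G[W]$ — i.e.\ that every periodic vertex of the dynamics $f^W$ of $G[W]$ is either periodic in $G$ or lies in $G[I \cap W]$ for some invariant set $I$ — using that a vertex $v$ which is slow in $G$ has $\gamma_q^G(v) \le p$, hence $\gamma_q^{W}(v) \le p$, so the forward orbit of $v$ under $f^W$ cannot close up with winding number $p$ unless it has already fallen onto one of the accounted-for orbits (here one invokes the relabelling/monotonicity argument as in the proof of \cite[Lemma~6.3]{AAM}, valid for arbitrary finite cyclic graphs). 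This gives $\numorb(G[W]) = \numorb + F$ and hence $\cl(G[W]) \simeq \bigvee^{\numorb + F - 1} S^{2l}$.

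For the second sentence, fix $W \subseteq \widetilde W$ with both in $\tfin(G)$, and let $h\colon G[W] \hookrightarrow G[\widetilde W]$ be the inclusion; it is a cyclic graph homomorphism between finite cyclic graphs with common winding fraction $\frac{l}{2l+1}$, so Proposition~\ref{prop:rank} applies. By that proposition $h$ induces a homotopy equivalence on clique complexes as soon as $d = \hit(h) = \tilde d$, where $d$ and $\tilde d$ are the numbers of periodic orbits of $G[W]$ and $G[\widetilde W]$ respectively. We have just shown $d = \tilde d = \numorb + F$, so it suffices to prove $\hit(h) = \numorb + F$, i.e.\ that every one of the $\numorb + F$ periodic orbits of $G[\widetilde W]$ is hit by $h$. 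This is where the structure of $\tfin(G)$ pays off again: the $\numorb$ "genuine" periodic orbits of $G[\widetilde W]$ already lie in $W$ (both $W$ and $\widetilde W$ contain all of $V_\numorb$), so they are hit by the identity; and for each invariant set $I$, the periodic orbit of $G[\widetilde W]$ inside $G[I \cap \widetilde W]$ is hit by the periodic orbit of $G[W]$ inside $G[I \cap W]$, because under the dynamics of $G[\widetilde W]$ the forward orbit of any vertex of $I \cap W$ converges (by the monotonicity argument) onto the unique periodic orbit of $G[\widetilde W]$ in that invariant set. Hence $\hit(h) = \numorb + F = d = \tilde d$ and $h$ is a homotopy equivalence.

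The main obstacle, I expect, is the bookkeeping in the count $\numorb(G[W]) = \numorb + F$: one must verify that passing from $G$ to a finite induced subgraph $G[W] \in \tfin(G)$ neither creates spurious periodic orbits (e.g.\ by a slow vertex of $G$ becoming periodic in $G[W]$, or by two invariant sets merging) nor destroys the bijection between periodic orbits of $G[W]$ and the set (periodic orbits of $G$) $\sqcup$ (invariant sets of permanently fast points of $G$). The key technical tools are Lemma~\ref{lem:cont-prop} (so that $\gamma^G_i$ restricted along the forward orbit behaves well and inequalities $\gamma^W \le \gamma^G$ hold where needed), Lemma~\ref{lem:lim-prop} and Lemma~\ref{lem:invariant-sets} (so that each invariant set is "seen" by $G[W]$ as a single periodic orbit), and the fact — cited already in Section~\ref{sec:finite-dynamics} — that \cite[Lemma~6.3]{AAM} identifying periodic vertices with the dismantled core $C^k_n$ holds for arbitrary finite cyclic graphs. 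Once the count is secured, the second sentence is essentially immediate from Proposition~\ref{prop:rank} together with the observation that every periodic orbit of the larger graph is hit.
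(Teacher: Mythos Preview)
Your proposal is correct and follows essentially the same route as the paper: apply Proposition~\ref{prop:vrchtpy} once the count $\numorb(G[W])=\numorb+F$ is known, and then use Proposition~\ref{prop:rank} with $d=\hit(h)=\tilde d=\numorb+F$ for the inclusion. The only difference is emphasis: the paper treats the identity $\numorb(G[W])=\numorb+F$ as an immediate consequence of the three defining conditions of $\tfin(G)$ (it is stated as a remark just before the lemma), whereas you spend most of your effort justifying that no spurious periodic orbits arise from slow vertices or from fast-but-achieving-periodicity vertices---your argument for this is correct and is exactly the verification the paper leaves implicit.
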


\begin{proof}
Since $G[W]$ has $\numorb+F$ periodic orbits, we have $\cl(G[W])\simeq \bigvee^{\numorb+F-1}S^{2l}$ by Proposition~\ref{prop:vrchtpy}.

Let $W,\widetilde{W}\in\tfin(G)$ with $W\subseteq \widetilde{W}$. We claim that each periodic orbit of $G[\widetilde{W}]$ is hit by the inclusion map $W \hookrightarrow \widetilde{W}$. This is clear if the vertices of the periodic orbit are in $V_\numorb$. Alternatively, if the periodic orbit of $G[\widetilde{W}]$ is in an invariant set of permanently fast points of $G$, then it is hit by the periodic orbit of $G[W]$ in the same invariant set. It follows from Proposition~\ref{prop:rank} (with $\Z^{\numorb+F-1}=\redhom_{2l}(\cl(G[W]))\to\redhom_{2l}(\cl(G[W]))=\Z^{\numorb+F-1}$ an isomorphism) that inclusion $\cl(G[W])\hookrightarrow\cl(G[\widetilde{W}])$ is a homotopy equivalence.
\end{proof}

\begin{lemma}\label{lem:singular-wf-cofinal}
Let $G$ be a closed, continuous, and singular cyclic graph with $\numorb$ and $F$ finite. Then the poset $\tfin(G)$ is cofinal in $\fin(G)$.
\end{lemma}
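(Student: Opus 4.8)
The plan is to show that for any finite $W_1 \in \fin(G)$ there is some $W \in \tfin(G)$ with $W_1 \subseteq W$. We build $W$ from $W_1$ by adding three kinds of vertices, corresponding to the three defining conditions of $\tfin(G)$. First I would throw in all periodic points: set $W_2 = W_1 \cup V_\numorb$, which is still finite since $\numorb$ is finite and each periodic orbit has length $2l+1$. Next, handle the fast vertices that achieve periodicity: for each $v \in W_2$ that is fast and achieves periodicity, with $f_m(v)$ periodic, adjoin $f_1(v), \ldots, f_m(v)$; since $W_2$ is finite and each such orbit segment is finite, this yields a finite set $W_3 \supseteq W_2$. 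One must check that this process terminates — i.e.\ that the newly added $f_i(v)$ do not themselves spawn unboundedly many further additions — but this is immediate because each $f_i(v)$ for $1 \le i \le m$ is either non-periodic-and-fast with $f_{m-i}(f_i(v))$ periodic (so its required segment is already contained in $\{f_i(v),\ldots,f_m(v)\}$) or is periodic (already in $V_\numorb \subseteq W_3$), using Lemma~\ref{lem:cont-prop}(iii).

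The third and main step is to arrange, for each of the finitely many invariant sets $I$ of permanently fast points, that $G[I\cap W]$ contains a single periodic orbit isomorphic to $C^l_{2l+1}$. Fix such an $I = E_0 \cup \cdots \cup E_{q-1}$ where $q = 2l+1$. Pick any permanently fast $v \in E_0$; by the definition of the equivalence relation and Lemma~\ref{lem:lim-prop}, the points $v^*, f_1(v^*), \ldots, f_{q-1}(v^*)$ are periodic (they lie in $V_\numorb$, not in $I$), so I cannot simply add $v^*$. Instead, the claim to establish is that one can choose finitely many permanently fast points, one from each $E_i$, forming a directed $(2l{+}1)$-cycle in $G$ — equivalently an induced copy of $C^l_{2l+1}$. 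The natural candidate is an orbit segment $v, f_1(v), \ldots, f_{q-1}(v)$ of a single permanently fast $v$: by the invariant-set structure $f_i(v) \in E_i$, and one must verify (i) that these $q$ points are genuinely distinct and (ii) that $v \diredge f_1(v) \diredge \cdots \diredge f_{q-1}(v) \diredge v$ holds, i.e.\ that $f_q(v) \preceq v$ cyclically closes up, so that $G$ cyclic forces all the required chords of $C^l_{2l+1}$. Distinctness follows because $v$ is permanently fast (not periodic), so no $f_i(v) = v$ for $0 < i < q$; the closure $f_{q-1}(v)\diredge v$ needs the fast condition $\gamma_q(v) > p = l$ together with $\gamma_q(v) < l+1$ (from Lemma~\ref{lem:wf-as-limit}, since $\wf(G)=\frac{l}{2l+1}$), which pins down $\lfloor \gamma_q(v)\rfloor = l$ and, combined with the cyclic structure, gives the edge back to $v$. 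I would then set $W$ to be $W_3$ together with one such orbit segment $\{v^{(I)}, f_1(v^{(I)}), \ldots, f_{q-1}(v^{(I)})\}$ for each invariant set $I$; since there are finitely many invariant sets ($F$ finite), $W$ is finite.

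Finally I would verify $W \in \tfin(G)$: the first condition holds by construction ($V_\numorb \subseteq W_2 \subseteq W$); the second holds since every fast vertex of $G$ lying in $W$ either came from the $W_2 \to W_3$ step (its orbit segment is in $W_3$) or is one of the chosen permanently-fast representatives, which by definition \emph{never} achieves periodicity, so the condition is vacuous for it; the third holds because for each invariant set $I$, the set $I \cap W$ contains exactly the chosen orbit segment (no periodic points lie in $I$, and no fast-achieving-periodicity vertices lie in $I$ since its points are permanently fast), and that segment induces a single copy of $C^l_{2l+1}$ by the previous paragraph. The main obstacle is the third step: proving that an orbit segment of a permanently fast point closes up into an induced $C^l_{2l+1}$ and does not accidentally merge with a periodic orbit or a neighboring equivalence class's representative. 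Controlling this requires carefully combining the cyclic-graph axiom with the bounds $l < \gamma_q(v^{(I)}) < l+1$ and the partition statement of Lemma~\ref{lem:invariant-sets}; I would also need to check that adding the representatives for different invariant sets does not disturb each other's induced subgraph, which follows again from Lemma~\ref{lem:invariant-sets} since the invariant sets are disjoint and the periodic vertices are untouched.
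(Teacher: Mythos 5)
Your steps one and two match the paper's construction exactly (set $W \mapsto W \cup V_\numorb$, then adjoin the orbit segments of fast points achieving periodicity), and your termination argument for step two is correct. The gap is in step three. You assert in your final verification that ``$I \cap W$ contains exactly the chosen orbit segment,'' but this is false in general: the arbitrary starting set $W_1$ may already contain permanently fast points of $I$, and nothing in your construction removes them (the only sets you subtract nothing from). Those leftover points $W_1 \cap I$ can easily produce \emph{additional} periodic orbits of the finite induced subgraph $G[I \cap W]$ beyond the one you deliberately inserted --- for instance, $W_1 \cap I$ could already contain two interleaved $(2l{+}1)$-cycles of winding $l$, giving $G[I\cap W]$ winding fraction $\frac{l}{2l+1}$ with at least two periodic orbits. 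Your proposal gives no mechanism to reduce this count to one, so condition three of $\tfin(G)$ may fail.

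The paper's proof confronts exactly this possibility: after guaranteeing at least one periodic orbit inside $I$, it supposes $G[\widetilde{W} \cap I]$ has $k \ge 2$ periodic orbits and then uses the fact that $v^* = u^*$ for any two permanently fast points $u, v$ in the same equivalence class to produce a finite directed path in $G$ joining the $k$-th orbit to the $(k{-}1)$-st, staying inside the appropriate arcs; adding that path's vertices to $\widetilde{W}$ kills one orbit, and iterating drives the count down to one. This merging step is the heart of the lemma and is missing from your argument. A secondary remark: the paper does not need (and does not prove) your claim that the orbit segment $v, f_1(v), \ldots, f_{q-1}(v)$ of a permanently fast $v$ is an \emph{induced} $C^l_{2l+1}$; it only needs some $(2l{+}1)$-cycle of total winding $l$ inside $I$, after which the merging procedure handles uniqueness. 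So even setting aside the main gap, you are taking on a harder verification than necessary.
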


\begin{proof} Let $W\in \fin(G)$ be arbitrary; we must show there exists some $\widetilde{W}\in\tfin(G)$ with $W\subseteq \widetilde{W}$. We will construct $\widetilde{W}$ by adding points to $W$, and we begin by letting $\widetilde{W}=W\cup V_\numorb$. For each $v_0\in W$ that is a fast point of $G$ achieving periodicity, let $m$ be the smallest positive integer such that $f_m(v_0)\in V$ is periodic, and add the collection of points $v_0\diredge v_1\diredge \ldots\diredge v_m=f_m(v_0)$ (with $\sum_{i=0}^{m-1}\ddist(v_i,v_{i+1})=\gamma_m(v_0)$) to $\widetilde{W}$. Hence the only periodic points of $G[\widetilde{W}]$ are either in $V_\numorb$ or in some invariant set of permanently fast points of $G$.

Let $\wf(G)=\frac{l}{2l+1}$. Next we need to ensure that if $I=E_0\cup\ldots\cup E_{2l}$ is an invariant set of permanently fast points for $G$, then $G[\widetilde{W}\cap I]$ contains a single periodic orbit. Add a finite number of points from $I$ to $\widetilde{W}$ to ensure that $G[\widetilde{W}\cap I]$ contains at least one periodic orbit (this is possible by adding $2l+1$ points $v_0\to v_1\to\ldots\to v_{2l}$ in $I$ with $\sum_{i=0}^{2l}\ddist(v_i,v_{i+1\md 2l})=l$). If $G[\widetilde{W}\cap I]$ contains $k\ge 2$ periodic orbits, then we may label them so that the $i$-th periodic orbit for $1\le i\le k$ consists of the points $v_0^i,\ldots,v_{2l}^i$ with $v_j^i\in E_j$ for all $j$, and with $v_j^k\prec v_j^{k-1}\prec\ldots\prec v_j^1\prec v_j^k$. To get rid of the $k$-th periodic orbit, find some directed path $v_0^k=u_0\diredge u_1\diredge\ldots\diredge u_m=v_0^{k-1}$ in $G$ with $u_i\in[v_{i\md 2l+1}^k,v_{i\md 2l+1}^{k-1}]_{S^1}$ for $0\le i\le m$; such a path must exist since $(v_0^{k})^*=(v_0^{k-1})^*$ in $G$. Add the points $\{u_0,\ldots,u_m\}$ to $\widetilde{W}$. It follows that $G[\widetilde{W}\cap I]$ now contains at most $k-1$ periodic orbits; repeat this process until  $G[\widetilde{W}\cap I]$ contains a single orbit. Hence we have found some $\widetilde{W}\in\tfin(G)$ with $W\subseteq \widetilde{W}$.
\end{proof}

\begin{theorem}\label{thm:homotopy-singular-II}
Let $G$ be a closed, continuous, and singular cyclic graph with $\wf(G)=\frac{l}{2l+1}$ and $\numorb$ and $F$ finite. Then $\cl(G)\simeq\bigvee^{\numorb+F-1}S^{2l}$.
\end{theorem}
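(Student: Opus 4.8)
The plan is to combine the two previous lemmas about $\tfin(G)$ with the homotopy-colimit description of $\cl(G)$ established at the start of Section~\ref{sec:infinite}. Concretely, first I would fix some $W_0 \in \tfin(G)$, which exists by Lemma~\ref{lem:singular-wf-cofinal}, and use the fact that $\fin(G;W_0)$ is cofinal in $\fin(G)$ so that
\[ \cl(G) = \colim_{W\in\fin(G;W_0)}\cl(G[W]) \simeq \hocolim_{W\in\fin(G;W_0)}\cl(G[W]). \]
Next I would observe that $\tfin(G)$, restricted to sets containing $W_0$, is itself cofinal in $\fin(G;W_0)$: given any $W \supseteq W_0$ in $\fin(G)$, Lemma~\ref{lem:singular-wf-cofinal} produces $\widetilde{W} \in \tfin(G)$ with $W \subseteq \widetilde{W}$, and replacing $\widetilde{W}$ by $\widetilde{W}\cup W_0$ (which is still in $\tfin(G)$ since the defining conditions are closed under adding the finitely many points already handled, and $W_0\in\tfin(G)$) gives an element of $\tfin(G)$ containing $W$. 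Hence the subposet $\tfin(G)\cap\fin(G;W_0)$ is cofinal, and the (homotopy) colimit over $\fin(G;W_0)$ agrees with the (homotopy) colimit over this subposet.

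Then I would invoke Lemma~\ref{lem:singular-wf}: every object in the diagram $\cl(G[-])$ restricted to $\tfin(G)$ is homotopy equivalent to $\bigvee^{\numorb+F-1}S^{2l}$, and every map in the diagram (each an inclusion $\cl(G[W])\hookrightarrow\cl(G[\widetilde{W}])$ for $W\subseteq\widetilde{W}$ in $\tfin(G)$) is a homotopy equivalence. Just as in the proof of Theorem~\ref{thm:homotopy-generic}, these inclusions are inclusions of closed subcomplexes, hence cofibrations, so the homotopy colimit of a diagram all of whose structure maps are homotopy equivalences is homotopy equivalent to any single object in it; this is the Quasifibration Lemma \cite[Proposition~3.6]{Hocolim}. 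Therefore
\[ \cl(G)\simeq\hocolim_{W\in\tfin(G)\cap\fin(G;W_0)}\cl(G[W])\simeq\bigvee^{\numorb+F-1}S^{2l}, \]
which is the claim.

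The main obstacle I anticipate is the cofinality bookkeeping in the second step — making sure that the $\widetilde{W}$ produced by Lemma~\ref{lem:singular-wf-cofinal} can be taken to contain a chosen basepoint set $W_0$ so that the diagram is genuinely a cofinal subdiagram of $\fin(G;W_0)$, and double-checking that $\tfin(G)$ is closed under the unions needed (for instance, that the union of two elements of $\tfin(G)$, or an element of $\tfin(G)$ together with $W_0$, still satisfies the ``single periodic orbit per invariant set'' condition). This last point needs a small argument: adding more points of an invariant set $I$ could a priori create a second periodic orbit, but one runs the orbit-merging construction from the proof of Lemma~\ref{lem:singular-wf-cofinal} once more to collapse back down to a single orbit, at the cost of enlarging $\widetilde{W}$ further; since this only ever adds finitely many points and the process terminates, cofinality is unaffected. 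Everything else is a direct appeal to the already-established homotopy-colimit formalism and to Lemmas~\ref{lem:singular-wf} and \ref{lem:singular-wf-cofinal}.
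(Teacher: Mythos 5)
Your proof is correct and takes essentially the same approach as the paper: pass to the cofinal subposet $\tfin(G)$, apply Lemma~\ref{lem:singular-wf} to see that every map in the restricted diagram is a homotopy equivalence between copies of $\bigvee^{\numorb+F-1}S^{2l}$, and conclude via the Quasifibration Lemma. The detour through $\fin(G;W_0)$ and the worry about closure of $\tfin(G)$ under unions are unnecessary, since Lemma~\ref{lem:singular-wf-cofinal} already gives cofinality of $\tfin(G)$ in $\fin(G)$ directly, and a cofinal subposet of a filtered poset is itself automatically filtered, which is all the colimit-to-hocolim replacement needs.
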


\begin{proof}
Note we can't have both $\numorb=0$ and $F=0$, since that would imply that every $v\in V$ is a slow point, and hence the supremum in $\wf(G)=\frac{l}{2l+1}$ would not be not attained. By Lemma~\ref{lem:singular-wf-cofinal} we have that $\tfin(G)$ is confinal in $\fin(G)$, and hence 
\[ \cl(G)=\colim_{W \in \tfin(G)} \cl(G[W])\simeq \hocolim_{W \in \tfin(G)}\cl(G[W]). \]
By Lemma~\ref{lem:singular-wf} all maps in the diagram $\cl(G[-])\colon \tfin(G)\to Top$ are homotopy equivalences between spaces homotopy equivalent to $\bigvee^{\numorb+F-1}S^{2l}$, giving $\cl(G)\simeq \hocolim_{W \in \tfin(G)}\cl(G[W])\simeq \bigvee^{\numorb+F-1}S^{2l}$ by the Quasifibration Lemma \cite[Proposition~3.6]{Hocolim}.
\end{proof}

\section{Geometric lemmas for the ellipse}\label{sec:geometric-lemmas}

We next prove several geometric lemmas about ellipses. These lemmas will allow us to describe the infinite cyclic graphs corresponding to Vietoris--Rips complexes of ellipses, and hence the homotopy types of Vietoris--Rips complexes of ellipses (in Section~\ref{sec:VR-of-ellipse}).

Let $d\colon\R^2\times\R^2\to\R$ be the Euclidean metric. Let $Y=\{(x,y)\in \R^2~|~(x/a)^2+y^2=1\}$ be an ellipse with $a>1$. We will later restrict attention to ellipses of small eccentricity, meaning the length of the semi-major axis satisfies $1<a\le\sqrt{2}$. The threshhold $a\le\sqrt{2}$ guarantees that all metric balls in $Y$ are connected (Lemma~\ref{lem:d_p-extrema}). Note we equip the ellipse $Y$ with the Euclidean metric $d$, even though we have been using the geodesic metric on the circle $S^1$.

We will often think of $Y$ as $S^1$, simply by fixing an orientation-preserving homeomorphism $Y\to S^1$. We write $p_1 \preceq p_2 \preceq \ldots \preceq p_n \preceq p_1$ if $p_1,p_2,\ldots,p_n \in Y$ are oriented in a clockwise fashion, allowing equality. We may also replace $p_i \preceq p_{i+1}$ with $p_i \prec p_{i+1}$ if in addition $p_i \neq p_{i+1}$. For $p,q\in Y$ with $p\neq q$ we denote the closed clockwise arc from $p$ to $q$ by $[p,q]_Y=\{z\in Y~|~p \preceq z \preceq q \preceq p\}$. Open and half-open arcs are defined similarly and denoted $(p,q)_Y$, $(p,q]_Y$, and $[p,q)_Y$.

\begin{definition}
Define the continuous function $h\colon Y \to Y$ which maps a point $p \in Y$ to the unique point in the intersection of the normal line to $Y$ at $p$ with $Y\setminus\{p\}$.
\end{definition}

\begin{lemma}\label{lem:h-surjective}
Given a point $p\in Y$ with $p\notin\{(\pm a,0),(0,\pm1)\}$, there is exactly one point $q$ in the diametrically opposite quadrant (on the other side of both axes) satisfying $h(q) = p$. In particular $h\colon Y \to Y$ is surjective.
\end{lemma}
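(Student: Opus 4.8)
The plan is to analyze the normal line to $Y$ at a point $q$ and track where its ``other'' intersection $h(q)$ with $Y$ lands as $q$ traverses one quadrant, using a continuity/monotonicity argument together with the behavior at the four vertices. Fix one open quadrant arc of $Y$, say the arc $A$ strictly between $(a,0)$ and $(0,1)$, and consider the diametrically opposite open quadrant arc $A'$ strictly between $(-a,0)$ and $(0,-1)$. First I would verify the easy boundary fact: as $q\to(a,0)$ along $A$, the normal line is the major axis, whose second intersection with $Y$ is $(-a,0)$; and as $q\to(0,1)$ along $A$, the normal is the minor axis, whose second intersection is $(0,-1)$. Hence $h$ restricted to $\overline{A}$ carries the endpoints $(a,0),(0,1)$ to the endpoints $(-a,0),(0,-1)$ of $\overline{A'}$.

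Next I would show that $h|_A$ maps $A$ into $A'$, i.e.\ the second intersection point always lies in the opposite quadrant. For a point $q=(a\cos t, \sin t)$ with $t\in(0,\pi/2)$, the inward normal direction is $(\tfrac{1}{a}\cos t,\ \sin t)$ (up to positive scaling), which points into the third quadrant relative to $q$; parametrizing the normal line and intersecting with $Y$ via the quadratic $(x/a)^2+y^2=1$ gives exactly two solutions, one being $q$ itself, and a short sign analysis of the other root shows its coordinates have signs opposite to those of $q$, so $h(q)\in A'$. Then I would argue $h|_{\overline A}\colon \overline A\to\overline A'$ is a continuous map of closed arcs that is a bijection: injectivity follows because two distinct points of $Y$ with the same image $p$ would give two distinct normal lines through $p$, but the normal line to $Y$ at any point $p\notin\{(\pm a,0),(0,\pm1)\}$ is unique and a line meets $Y$ in at most two points, so at most one preimage of $p$ lies in any given quadrant — this already yields ``exactly one point $q$ in the diametrically opposite quadrant with $h(q)=p$'' once surjectivity onto $A'$ is established. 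Surjectivity onto $A'$ then follows from the intermediate value theorem: $h|_{\overline A}$ is continuous, sends the two endpoints of $\overline A$ to the two distinct endpoints of $\overline A'$, and (being injective, hence strictly monotone with respect to the cyclic orders) must sweep out all of $\overline A'$.

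Finally, running this argument over all four quadrants (the other three cases are identical by the symmetry of $Y$ under reflection in the coordinate axes) shows every $p\in Y$ with $p\notin\{(\pm a,0),(0,\pm1)\}$ has exactly one preimage in its diametrically opposite quadrant; together with the explicit values $h^{-1}$ at the four vertices (each vertex is hit by the opposite vertex), this gives that $h\colon Y\to Y$ is surjective.

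\textbf{Main obstacle.} The routine part is the quadratic computation pinning down the signs of the second intersection point; the one place demanding care is the \emph{injectivity/monotonicity} step, since I am claiming that $h$ restricted to a closed quadrant arc is strictly monotone with respect to the clockwise cyclic order. This needs the uniqueness of the normal line at non-vertex points plus the fact that a line meets the ellipse in at most two points, and one must also check the endpoint behavior is non-degenerate (the endpoints of $\overline A$ really do go to the two distinct endpoints of $\overline A'$, not both to the same point) so that the intermediate value argument actually covers all of $A'$. I expect this monotonicity bookkeeping, rather than any single computation, to be the delicate point.
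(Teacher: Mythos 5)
There are two genuine gaps here, and both matter.

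First, your step asserting that $h$ carries the open quadrant arc $A$ into the diametrically opposite arc $A'$ is exactly the content of Lemma~\ref{lem:h-opposite-quadrant}, and that statement is \emph{equivalent} to $a\le\sqrt{2}$; it fails for $a>\sqrt{2}$. But Lemma~\ref{lem:h-surjective} is stated (and later used, e.g.\ in the proof of Lemma~\ref{lem:h-bijective}) for arbitrary $a>1$: the restriction to small eccentricity is only imposed afterward. For $a>\sqrt{2}$ and $q=(a\cos t,\sin t)$ with $t$ near $\pi/2$, the normal at $q$ has $y$-intercept $(1-a^2)\sin t<-1$, so its second intersection with $Y$ lands in the \emph{adjacent} quadrant, not $A'$. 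Your IVT argument, which needs $h|_{\overline A}$ to be a self-map of $\overline{A'}$ with the stated endpoint behavior, therefore does not apply. (Minor aside: $(\tfrac{1}{a}\cos t,\sin t)$ is the \emph{outward} normal direction on that arc, not the inward one.)

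Second, and more serious since you correctly flag it as the delicate point: the injectivity argument does not work as stated. If $q_1\ne q_2$ both satisfy $h(q_i)=p$, the two lines through $p$ in question are the normals to $Y$ at $q_1$ and at $q_2$ — not the normal at $p$ — and each of them meets $Y$ in exactly two points, namely $q_i$ and $p$. Uniqueness of the normal at $p$ is irrelevant, and ``a line meets $Y$ in at most two points'' is satisfied by both lines; there is no contradiction, and nothing here prevents $q_1,q_2$ from lying in the same quadrant. In fact $h$ genuinely fails to be injective for $a>\sqrt{2}$, which is precisely what Lemma~\ref{lem:h-bijective} exploits. What is actually needed is a monotonicity estimate, and the paper gets it cheaply: parametrize the candidate preimage as $q=(a\cos t,\sin t)$, $t\in(0,\pi/2)$, reduce $h(q)=p$ to the scalar equation $g(t)=(a^2-1)\sin t - ax\tan t + y=0$, and observe $g'(t)=(a^2-1)\cos t - ax\sec^2 t>0$ using only $a>1$ and $x<0$. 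Strict monotonicity of $g$ gives at-most-one root; $g(0)=y<0$ together with $g(t)\to\infty$ as $t\to(\pi/2)^-$ gives at-least-one. If you want to retain the geometric phrasing, you will still need some such one-variable monotonicity computation in the background — appealing to uniqueness of the normal at $p$ will not close the gap.
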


\begin{proof}
Without loss of generality, let $p = (x,y) \in Y$ be in the bottom left quadrant, meaning $x<0$ and $y<0$. We must find some $q = (a \cos t, \sin t)$ with $0 < t < \frac{\pi}{2}$ and $h(q)=p$. The direction of the vector perpendicular to the ellipse at $q$ is $(1, a \tan t)$, and the direction of the vector from $p$ to $q$ is $(a \cos t - x, \sin t - y)$. Thus $h(q) = p$ whenever 
\[ c\begin{pmatrix} a \cos t - x\\ \sin t - y \end{pmatrix} = \begin{pmatrix} 1\\ a \tan t \end{pmatrix} \]
for some $c\in\R$, or alternatively, when
\[ g(t):=(a^2 - 1) \sin t - ax \tan t + y= 0. \]
The derivative $g'(t)=(a^2 - 1) \cos t - ax \sec^2 t$ is strictly positive on $0 < t < \frac{\pi}{2}$ since $a>1$ and $x<0$. Therefore $g(t)$ is strictly increasing on $0 < t < \frac{\pi}{2}$, and so there is at most one $q$ with $h(q) = p$. To see that there is at least one such $q$, note that $g$ is continuous with $g(0)=y<0$ and $\lim_{t\to(\frac{\pi}{2}^-)}g(t)=\infty$.

The fact that $h\colon Y \to Y$ is surjective follows since $h((\pm a,0)) = (\mp a,0)$ and $h((0,\pm 1)) = (0,\mp 1)$, and by the symmetry of the ellipse.
\end{proof}

\begin{lemma}\label{lem:h-opposite-quadrant}
Ellipse $Y=\{(x,y)\in \R^2~|~(x/a)^2+y^2=1\}$ with $a>1$ is an ellipse of small eccentricity (meaning $1<a\le\sqrt{2}$) if and only if for each point $q\in Y$ not on an axis, $h(q)$ is in the quadrant diametrically opposite from $q$.
\end{lemma}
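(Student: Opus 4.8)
The plan is to analyze the function $g(t) = (a^2-1)\sin t - ax\tan t + y$ from the proof of Lemma~\ref{lem:h-surjective} more carefully, tracking the location of the solution $q$ (equivalently the sign of $t$) as the parameter $a$ varies, and pinning down exactly when $q$ can leave the diametrically opposite quadrant. By the symmetry of the ellipse it suffices to work with $p=(x,y)$ in the bottom-left open quadrant ($x<0$, $y<0$) and look for $q = (a\cos t, \sin t)$ with $h(q)=p$; the question is precisely whether the unique such $q$ always has $0 < t < \tfrac{\pi}{2}$ (the top-right quadrant) or whether, for some $p$, the solution slides into an adjacent quadrant, i.e.\ $t \le 0$ or $t \ge \tfrac{\pi}{2}$.

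First I would observe that $q$ lies in an \emph{adjacent} quadrant rather than the opposite one exactly when the normal line to $Y$ at $q$, followed to its second intersection with $Y$, reaches $p$ with $q$ on an axis or just past it; the extreme case is when $q$ coincides with a vertex, i.e.\ $t=0$ (so $q=(a,0)$) or $t=\tfrac\pi2$ (so $q=(0,1)$). So I would examine the endpoint behavior of $g$. At $t=0$ we have $g(0)=y<0$, and at $t\to(\tfrac\pi2)^-$ we have $g(t)\to+\infty$, which already forces the root into $(0,\tfrac\pi2)$ for the bottom-left $p$ — but this is only half the statement. The content of the ``only if'' direction is that when $a>\sqrt2$, some point $q$ in the top-right quadrant is mapped by $h$ into an adjacent quadrant (not the bottom-left one). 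So I would instead parametrize by $q=(a\cos t,\sin t)$ with $0<t<\tfrac\pi2$ fixed, compute $h(q)$ explicitly as the second intersection of the normal line at $q$ with $Y$, and determine for which $(a,t)$ the point $h(q)$ crosses the $y$-axis (leaving the bottom-left quadrant into the bottom-right) or the $x$-axis (into the top-left). The normal line at $q$ has direction $(1, a\tan t)$; its second intersection with the ellipse is found by substituting the line into $(x/a)^2+y^2=1$ and taking the root other than $q$. Setting the $x$-coordinate of $h(q)$ equal to $0$ gives one algebraic curve in the $(a,t)$-plane, and I expect the boundary case to be $q$ approaching the vertex $(0,1)$, i.e.\ $t\to\tfrac\pi2$, where the threshold $a=\sqrt2$ emerges — this matches Lemma~\ref{lem:d_p-extrema} referenced in the introduction, where $a\le\sqrt2$ is exactly the condition that the normal at $(0,\pm1)$ behaves well. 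Indeed, the radius of curvature of $Y$ at $(0,1)$ is $a^2$, and the normal at $(0,1)$ (the $y$-axis) hits $Y$ again at $(0,-1)$, at distance $2$; the transition happens when $a^2$ compares with the semi-minor extent, giving $a^2 = 2$.

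The main obstacle I anticipate is making the ``only if'' direction precise and clean: I need to exhibit, for every $a>\sqrt2$, an explicit point $q$ near the vertex $(0,1)$ whose normal-line second intersection $h(q)$ lies strictly in a quadrant adjacent to $q$'s opposite quadrant, and to verify the sign of the relevant coordinate of $h(q)$ there. For the ``if'' direction ($1<a\le\sqrt2 \Rightarrow h(q)$ in the opposite quadrant), I would combine the $g$-monotonicity argument above (which handles the case $q$ not on an axis via $g(0)=y<0$) with a direct check on the axis points $q\in\{(\pm a,0),(0,\pm1)\}$, where $h(q)$ is the antipode, together with a continuity/connectedness argument: the quadrant containing $h(q)$ can only change as $q$ passes a vertex, and at each vertex $h$ sends it to the opposite vertex, so for $1<a\le\sqrt2$ the image stays in the opposite quadrant throughout each open quadrant's worth of $q$. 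I would also need to rule out $h(q)$ landing exactly on an axis for $q$ in an open quadrant when $a\le\sqrt2$, which again reduces to the endpoint values of $g$. Once the vertex-neighborhood computation isolates the exponent $a^2$ versus $2$, the rest is bookkeeping with the explicit quadratic for the line–ellipse intersection.
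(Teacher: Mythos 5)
Your core plan---parametrize $q = (a\cos t, \sin t)$ in the open top-right quadrant and determine where $h(q)$ lies, with the threshold $a=\sqrt 2$ emerging at the vertex $(0,1)$---is the same as the paper's, and your radius-of-curvature heuristic correctly predicts the answer. But the proposal stops short of a proof and proposes a harder computation than is needed. The paper never solves for the second intersection $h(q)$; it computes only the $y$-intercept of the normal line at $q$: that line has slope $a\tan t$, so it meets the $y$-axis at $(0, y_0)$ with $y_0 = \sin t - (a\cos t)(a\tan t) = (1-a^2)\sin t$. For $1 < a \le \sqrt 2$ this gives $-1 < y_0 < 0$ for every $t \in (0,\pi/2)$, so the normal is still interior to the ellipse when it crosses the $y$-axis and therefore exits at a point with $x<0$ and $y < y_0 < 0$, i.e.\ the diametrically opposite quadrant. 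For $a > \sqrt 2$ and $t$ near $\pi/2$ one gets $y_0 < -1$, so the normal leaves the ellipse before reaching the $y$-axis and $h(q)$ has $x>0$, an adjacent quadrant. This single scalar inequality settles both directions of the equivalence and avoids the line--ellipse intersection algebra you anticipate.

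Your fallback route via the $g$-function plus a continuity/connectedness argument has a genuine gap. Excluding $h(q) \in \{(-a,0),(0,-1)\}$ for $q$ in the open top-right arc $A$ only confines the connected set $h(A)$ to one of the two arcs of $Y \setminus \{(-a,0),(0,-1)\}$; nothing in your sketch decides which one, and the assertion ``the quadrant containing $h(q)$ can only change as $q$ passes a vertex'' is essentially the conclusion to be proved, not a premise you may invoke. Closing this would require either an explicit evaluation of $h$ at one interior point of $A$, or an argument that the $g$-root map $p \mapsto q(p)$ from the open bottom-left arc is onto the open top-right arc---extra work that the $y$-intercept computation makes unnecessary.
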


\begin{proof}
We restrict attention, without loss of generality, to the case where $q = (a \cos t, \sin t)$ with $0 < t < \pi / 2$. Let $L$ be the line perpendicular to $Y$ at $q$. The slope of $L$ is $a \tan t$, so the intersection of $L$ with the $x$-axis is the point $(0,y_0)$ where
\[y_0 =\sin t - (a \cos t)(a \tan t) = (1 - a^2) \sin t.\]
For $1 < a \le \sqrt 2$ we have $-1<y_0<0$, and hence the $h(q)$ must lie in the lower left quadrant. For $a>\sqrt{2}$ we see that for $t$ sufficiently close to $\pi/2$, we'll have $y_0<-1$, meaning that point $h(q)$ will be in the lower right quadrant and the property does not hold.
\end{proof}

\begin{lemma}\label{lem:h-bijective}
The function $h$ is bijective if and only if $Y$ is an ellipse of small eccentricity.
\end{lemma}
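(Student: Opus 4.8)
The plan is to prove Lemma~\ref{lem:h-bijective} by combining the two previous lemmas, with the bulk of the new work going into the ``only if'' direction. First I would dispose of the easy direction: if $Y$ is an ellipse of small eccentricity, then by Lemma~\ref{lem:h-surjective} the map $h$ is surjective, so it suffices to show injectivity. Given a point $p\in Y$, if $p$ is not on an axis then Lemma~\ref{lem:h-surjective} already asserts there is exactly one $q$ (in the diametrically opposite quadrant) with $h(q)=p$; but one must also rule out preimages $q$ lying elsewhere. This is where Lemma~\ref{lem:h-opposite-quadrant} enters: since $Y$ has small eccentricity, \emph{every} $q$ not on an axis has $h(q)$ in the quadrant opposite $q$, so if $h(q)=p$ with $p$ in (say) the bottom-left quadrant then $q$ must lie in the top-right quadrant, and then uniqueness from Lemma~\ref{lem:h-surjective} applies. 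For the axis points, $h$ acts on $\{(\pm a,0),(0,\pm1)\}$ by the explicit involution $(\pm a,0)\mapsto(\mp a,0)$, $(0,\pm1)\mapsto(0,\mp1)$ recorded in the proof of Lemma~\ref{lem:h-surjective}, and no non-axis point can map to an axis point (again by Lemma~\ref{lem:h-opposite-quadrant}, or directly: the normal line at an axis point is an axis, whose only other intersection with $Y$ is the opposite axis point). Hence $h$ is a bijection.

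For the ``only if'' direction (contrapositive): suppose $a>\sqrt 2$, so $Y$ is \emph{not} of small eccentricity; I want to show $h$ is not bijective. By Lemma~\ref{lem:h-opposite-quadrant}, failure of small eccentricity means there exists some $q$ not on an axis, say $q=(a\cos t,\sin t)$ with $t$ close to $\pi/2$, whose normal line $L$ meets the $x$-axis at $(0,y_0)$ with $y_0<-1$; consequently $h(q)$ lies in the lower-\emph{right} quadrant rather than the lower-left. The cleanest way to extract non-injectivity is a continuity/degree argument: consider the function $t\mapsto h(a\cos t,\sin t)$ on $(0,\pi/2)$. As $t\to 0^+$ the point $q\to(a,0)$ and $h(q)\to(-a,0)$ (lower-left boundary, or upper-left by symmetry — in any case with negative $x$-coordinate), while for $t$ near $\pi/2$ the $x$-coordinate of $h(q)$ is positive. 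By the intermediate value theorem there is an interior $t^\ast$ with $h(q^\ast)$ on the negative $y$-axis, i.e.\ $h(q^\ast)=(0,-1)$. But $h((0,1))=(0,-1)$ as well, and $q^\ast\neq(0,1)$ (since $q^\ast$ is in the open first quadrant), so $h$ is not injective.

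Alternatively — and this may be the slicker route to write up — I would argue directly from the sign analysis already in the proof of Lemma~\ref{lem:h-opposite-quadrant}. With $q=(a\cos t,\sin t)$ in the open first quadrant, the normal line has $x$-intercept $y_0=(1-a^2)\sin t$, which for $a>\sqrt2$ ranges over all of $(1-a^2,0)\supsetneq(-1,0)$ as $t$ ranges over $(0,\pi/2)$; wherever $y_0<-1$ the image $h(q)$ is in the fourth quadrant, wherever $-1<y_0<0$ it is in the third, and by continuity there is a $t^\ast$ with $y_0=-1$, forcing $h(q^\ast)=(0,-1)$. Again this collides with $h((0,1))=(0,-1)$. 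The main obstacle is being careful about the boundary/axis cases — tracking exactly which quadrant or axis $h(q)$ lands in as $q$ approaches an axis point, and making sure the intermediate-value argument produces a genuine second preimage of a point (rather than, say, both candidate preimages coinciding). Handling the degenerate points $(\pm a,0),(0,\pm1)$ by the explicit formula for $h$ there, and invoking the symmetry of $Y$ under reflection in the axes to reduce to the first quadrant, keeps this bookkeeping under control.
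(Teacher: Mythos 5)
Your proposal is correct. The forward direction (small eccentricity $\Rightarrow$ bijective) fills in exactly the details the paper leaves implicit when it cites Lemmas~\ref{lem:h-surjective} and \ref{lem:h-opposite-quadrant}: Lemma~\ref{lem:h-opposite-quadrant} confines any non-axis preimage of a non-axis point $p$ to the quadrant opposite $p$, where Lemma~\ref{lem:h-surjective} supplies exactly one preimage, and the axis points form an explicit involution that no non-axis point can hit.

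For the backward direction your route genuinely differs from the paper's. The paper reuses Lemma~\ref{lem:h-surjective} as a \emph{pigeonhole}: once $a>\sqrt2$ forces some $q$ with $h(q)=p$ in a quadrant merely adjacent to $q$, Lemma~\ref{lem:h-surjective} manufactures a second preimage $q'$ of the same $p$ in the quadrant opposite $p$, and $q\neq q'$ since they lie in different quadrants. You instead run an intermediate-value argument on $t\mapsto h(a\cos t,\sin t)$ along the open first-quadrant arc, using the sign analysis of the $y$-axis intercept $y_0=(1-a^2)\sin t$ from the proof of Lemma~\ref{lem:h-opposite-quadrant} to exhibit an interior $t^\ast$ with $h(q^\ast)=(0,-1)=h((0,1))$, a concrete collision. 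Both are valid: the paper's version is shorter and purely structural, while yours is constructive and makes the failure of injectivity tangible at a specific pair of points, at the cost of a bit more bookkeeping about what happens as $q$ approaches the axis endpoints. One small nit inherited from the paper: ``$x$-intercept'' in your alternative phrasing should read ``$y$-intercept,'' since $(0,y_0)$ lies on the $y$-axis.
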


\begin{proof}
If $Y$ is an ellipse of small eccentricity, then $h$ is bijective by Lemmas~\ref{lem:h-surjective} and \ref{lem:h-opposite-quadrant}. If $Y$ is not an ellipse of small eccentricity, then by Lemmas~\ref{lem:h-opposite-quadrant} and \ref{lem:h-surjective} there is some $p \in Y$ with points $q$ in an adjacent quadrant and $q'$ in the opposite quadrant satisfying $h(q) = p = h(q')$. Hence $h$ is not injective.
\end{proof}

For $p \in Y$, define the function $d_p \colon Y \to \R$ by $d_p(q)=d(p,q)$.

\begin{lemma}\label{lem:d_p-extrema}
For $Y$ an ellipse of small eccentricity and $p \in Y$, the only local extrema of $d_p \colon Y \to \R$ are a global minimum at $p$ and a global maximum at $h^{-1}(p)$. It follows that nonempty metric balls in $Y$ are either contractible or all of $Y$.
\end{lemma}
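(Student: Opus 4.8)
The plan is to analyze the function $d_p$ by working in a convenient parametrization of the ellipse and locating critical points directly, then use connectedness of sublevel sets to deduce the claim about metric balls. Write $q = q(t) = (a\cos t, \sin t)$ for $t \in [0, 2\pi)$, and fix $p = (a\cos s, \sin s)$. Consider
\[
\phi(t) = d_p(q(t))^2 = (a\cos t - a\cos s)^2 + (\sin t - \sin s)^2.
\]
First I would compute $\phi'(t) = -2a^2\sin t(\cos t - \cos s) + 2\cos t(\sin t - \sin s)$ and simplify; the goal is to show that $\phi'$ vanishes exactly at $t = s$ (the minimum), at $t = t^*$ with $q(t^*) = h^{-1}(p)$ (the maximum), and nowhere else. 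Clearing the trivial factor corresponding to $t=s$ (after using sum-to-product identities, $\cos t - \cos s$ and $\sin t - \sin s$ both carry a factor of $\sin\frac{t-s}{2}$), one is left with a single equation whose solution set in $(s, s+2\pi)\setminus\{\text{nothing}\}$ must be shown to be a single point. The condition that this remaining critical point is exactly $h^{-1}(p)$ follows from the definition of $h$: a critical point of $d_p$ other than $p$ is a point $q$ where the segment $pq$ is perpendicular to $Y$, i.e.\ $p$ lies on the normal line at $q$, i.e.\ $h(q) = p$. So the critical points of $d_p$ besides $p$ are exactly the points of $h^{-1}(p)$, and by Lemma~\ref{lem:h-bijective} there is exactly one such point since $Y$ has small eccentricity.

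Next I would argue that this unique point $h^{-1}(p)$ is a global maximum and that there are no other local extrema. Since $Y$ is compact and $d_p$ continuous, $d_p$ attains a global maximum; this maximum is a local extremum hence a critical point, hence equals $p$ or $h^{-1}(p)$, and it cannot be $p$ (which is the global minimum with value $0$, and $d_p$ is not constant), so it is $h^{-1}(p)$. Having only two critical points $p$ and $h^{-1}(p)$ on the circle $Y$, with $p$ a (strict, global) minimum and $h^{-1}(p)$ the global maximum, forces $d_p$ to be strictly monotonic along each of the two arcs of $Y \setminus \{p, h^{-1}(p)\}$: a continuous function on an interval with no interior critical points is strictly monotone. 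Hence there are no further local extrema.

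For the consequence about metric balls, fix $r > 0$ and consider the sublevel set $B = \{q \in Y : d_p(q) \le r\}$ (the closed ball; the open ball is identical with $<$). If $r < \max d_p = d(p, h^{-1}(p))$, then by strict monotonicity of $d_p$ on the two arcs from $p$ to $h^{-1}(p)$, the set $B$ is a single closed arc of $Y$ containing $p$ in its interior (the preimage of $[0,r]$ under each monotone branch is a subinterval), hence contractible. If $r \ge \max d_p$, then $B = Y$. This establishes that nonempty metric balls in $Y$ are either contractible (arcs) or all of $Y$.

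**The main obstacle** I anticipate is the algebraic step of showing the reduced critical-point equation has a unique solution on the relevant arc — i.e.\ that beyond $t = s$ there is exactly one other critical point, not three. One clean way around grinding through this: instead of counting solutions of the equation directly, use the geometric characterization immediately. Critical points of $d_p$ other than $p$ are precisely the $q$ with $h(q) = p$; Lemma~\ref{lem:h-surjective} and Lemma~\ref{lem:h-opposite-quadrant} (equivalently Lemma~\ref{lem:h-bijective}) already give that, for $Y$ of small eccentricity, $h$ is a bijection, so $h^{-1}(p)$ is a single point. This sidesteps the polynomial-counting entirely and reduces the lemma to the monotonicity argument plus the elementary fact that a smooth function on an interval with no interior critical points is strictly monotone. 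The only genuinely computational check remaining is that the boundary cases $p \in \{(\pm a, 0), (0, \pm 1)\}$ behave correctly, which follows from the explicit formula $h((\pm a,0)) = (\mp a, 0)$, $h((0,\pm 1)) = (0, \mp 1)$ noted in the proof of Lemma~\ref{lem:h-surjective} together with the evident symmetry of $d_p$ under reflection across the axis through $p$.
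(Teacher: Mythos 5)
Your proposal is correct and, after abandoning the initial computational route, lands on essentially the same argument as the paper: a local extremum $q \neq p$ of $d_p$ is exactly a point with $h(q) = p$, bijectivity of $h$ (Lemma~\ref{lem:h-bijective}) gives a unique such point, and compactness forces it to be the global maximum. You supply slightly more detail than the paper on why two critical points force strict monotonicity on the complementary arcs and on why sublevel sets are therefore arcs or all of $Y$, but the core reasoning is identical.
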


\begin{proof}
    Clearly $p$ is the global minimum of $d_p$. For any local extrema $q \neq p$ of $d_p$ it must be the case that the circle of radius $d(p,q)$ centered at $p$ is tangent to $Y$ at $q$, and hence $h(q)=p$. By Lemma~\ref{lem:h-bijective}, $h$ is bijective, and since $Y$ is compact it follows that $h^{-1}(p)$ is the global maximum of $d_p$ and that there are no other local extrema besides $p$ and $h^{-1}(p)$.
\end{proof}

Let $Y$ be an ellipse of small eccentricity. If $p\in Y$ and $0<r\le d_p(h^{-1}(p))$, then we define $g_r(p)\in Y$ by
\[ g_r(p)=\underset{q\in [p,h^{-1}(p)]_Y}{\mbox{argmax}}\{d(p,q)~|~d(p,q)\le r\}. \]
Note that $g_r(p)$ is the unique solution $q$ to $d(p,q) = r$ on $[p, h^{-1}(p)]_Y$.

Note $\inf_{p \in Y}\{d(p,h^{-1}(p))\}=2$, and that this infimum is attained for $p=(0,\pm1)$ and $h^{-1}(p)=(0,\mp1)$. Hence for some fixed $0<r<2$ we can define a function $g_r \colon Y \to Y$. It is not hard to see that $g_r$ is bijective and of degree (i.e.\ winding number) one. It follows that if $y_0\prec y_1\prec y_2\prec y_0$ then $g_r(y_0)\prec g_r(y_1)\prec g_r(y_2)\prec g_r(y_0)$ (i.e.\ $g_r$ is cyclic), and if $0<r<r'<2$ then $y\prec g_r(y)\prec g_{r'}(y)\prec y$.

\begin{lemma}\label{lem:g_r-cont-r}
For $Y$ an ellipse of small eccentricity and $p\in Y$, the function $(0,2)\to Y$ defined by $r\mapsto g_r(p)$ is continuous.
\end{lemma}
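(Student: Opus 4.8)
The plan is to upgrade Lemma~\ref{lem:d_p-extrema} into the statement that $d_p$ restricts to a \emph{homeomorphism} from the closed arc $A:=[p,h^{-1}(p)]_Y$ onto the interval $[0,M]$, where $M:=d_p(h^{-1}(p))$, and then to read off continuity of $r\mapsto g_r(p)$ as (essentially) the inverse of this homeomorphism.

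Here is how I would carry it out. Fix $p\in Y$. Since $p\neq h^{-1}(p)$, the arc $A$ is a proper closed sub-arc of $Y$, hence homeomorphic to a compact interval; fix such a homeomorphism and use it to regard $\phi:=d_p|_A$ as a continuous function on $[0,1]$ with $\phi(0)=d_p(p)=0$ and $\phi(1)=d_p(h^{-1}(p))=M$. By Lemma~\ref{lem:d_p-extrema} these are the global minimum and maximum of $d_p$ on all of $Y$, so $\phi$ takes values in $[0,M]$, and since $\phi$ is continuous on a connected set and attains both $0$ and $M$, its image is exactly $[0,M]$. The remark preceding the lemma gives $M\geq 2$, so $(0,2)\subseteq[0,M]$ and $g_r(p)$ is defined for every $r\in(0,2)$.

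The only step needing a moment of care is injectivity of $\phi$. By Lemma~\ref{lem:d_p-extrema}, $d_p$ has no local extrema on the open arc $(p,h^{-1}(p))_Y$, which is open in $Y$; hence $\phi$ has no local extrema on $(0,1)$. Suppose $\phi(s)=\phi(t)$ for some $0\le s<t\le 1$. On the compact interval $[s,t]$ the function $\phi$ attains a maximum $\mu$ and a minimum $\nu$. If $\mu>\phi(s)$, the maximum is attained at some interior point of $[s,t]$, hence at a point of $(0,1)$, producing a forbidden interior local extremum; likewise if $\nu<\phi(s)$. So $\mu=\nu=\phi(s)$ and $d_p$ is constant on the corresponding sub-arc of $Y$; this is impossible, since $Y$, not being a circle, meets every circle in at most four points and so contains no circular arc (and in any case every interior point of such a sub-arc would be a local extremum of $d_p$, contradicting Lemma~\ref{lem:d_p-extrema}). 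Hence $\phi$ is injective, and a continuous injection on an interval is strictly monotone; since $\phi(0)=0<M=\phi(1)$ it is strictly increasing. Alternatively, one can sidestep this: the graph $\{(r,g_r(p)):r\in(0,2)\}$ equals $\{(r,q)\in(0,2)\times A: d(p,q)=r\}$ by the uniqueness noted after the definition of $g_r$, and this set is closed in $(0,2)\times Y$; a map into the compact space $Y$ with closed graph is continuous.

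To finish, $\phi=d_p|_A$ is a continuous bijection from the compact space $A$ onto $[0,M]$, hence a homeomorphism, so $\phi^{-1}\colon[0,M]\to A\subseteq Y$ is continuous. By definition $g_r(p)$ is the unique point of $A$ at distance $r$ from $p$, i.e.\ $g_r(p)=\phi^{-1}(r)$; restricting $\phi^{-1}$ to $(0,2)\subseteq[0,M]$ shows that $r\mapsto g_r(p)$ is continuous. I expect the injectivity/strict-monotonicity step to be the only place requiring argument; everything else is immediate from Lemma~\ref{lem:d_p-extrema} and compactness.
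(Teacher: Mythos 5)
Your proof is correct. Both you and the paper ultimately lean on Lemma~\ref{lem:d_p-extrema} for the fact that $d_p$ is strictly monotone on the arc $[p,h^{-1}(p)]_Y$, but you package the conclusion differently. The paper runs a hands-on $\epsilon$-$\delta$ argument, picking auxiliary points $q^\pm$ on either side of $g_r(p)$ and squeezing $g_{r'}(p)$ between them; you instead observe that $d_p$ restricted to the compact arc is a continuous bijection onto $[0,M]$, hence a homeomorphism, and identify $r\mapsto g_r(p)$ as the restriction of its inverse (with the closed-graph-into-a-compact-Hausdorff-space criterion as an equivalent shortcut). Your route is more self-contained in two ways. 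First, you actually derive strict monotonicity from the ``no local extrema on the open arc'' statement of Lemma~\ref{lem:d_p-extrema}, ruling out intervals of constancy; the paper simply asserts ``$d_p$ is strictly increasing'' as an immediate consequence. Second, the paper's final inference that $g_{r'}(p)\in(q^-,q^+)_Y$ forces $d(g_r(p),g_{r'}(p))<\epsilon$ quietly needs the whole arc $(q^-,q^+)_Y$, not just its endpoints, to lie in the Euclidean $\epsilon$-ball about $g_r(p)$---which is itself another application of Lemma~\ref{lem:d_p-extrema}, now at the point $g_r(p)$; your compactness packaging sidesteps this entirely. Both proofs are valid; yours is arguably the more robust writeup of the same underlying idea.
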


\begin{proof}
Function $d_p\colon (p, h^{-1}(p))_Y\to \R$ is continuous since $(p, h^{-1}(p))_Y$ is the image of a continuous curve in the plane. Hence given $\epsilon > 0$, there exists $\delta > 0$ such that $d(q,g_r(p)) < \delta$ for $q\in (p, h^{-1}(p))_Y$ implies $|d_p(q) - r| < \epsilon$. Choose $q^- \in (p,g_r(p))_Y$ with $d(q^-,g_r(p))\le\min\{\delta, \epsilon\}$, and choose $q^+ \in (g_r(p), h^{-1}(p))_Y $ with $d(g^+,g_r(p))\le\min\{\delta, \epsilon\}$; this is possible since $(p, h^{-1}(p))_Y$ is a continuous curve through $g_r(p)$. Since $d_p$ is strictly increasing on $(p, h^{-1}(p))_Y$ by Lemma~\ref{lem:d_p-extrema}, we have $d_p(q^-) < r < d_p(q^+)$. Thus for $|r - r'| < \min(r - d_p(q^-),d_p(q^+) - r)$, we see that $g_{r'}(p) \in (q^-, q^+)_Y$ and hence $d(g_r(p), g_{r'}(p)) < \epsilon$.
\end{proof}

\begin{lemma}\label{lem:g_r-cont-p}
For $Y$ an ellipse of small eccentricity and $0<r<2$, the function $g_r\colon Y\to Y$ is continuous.
\end{lemma}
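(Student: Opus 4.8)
The plan is to show that $g_r\colon Y\to Y$ is continuous by fixing $p_0\in Y$ and proving continuity at $p_0$. Intuitively, $g_r(p)$ is the point $q$ on the clockwise arc from $p$ to $h^{-1}(p)$ with $d(p,q)=r$, and we want small perturbations of $p$ to produce small perturbations of this point. The natural approach is to consider the two-variable function $D\colon Y\times Y\to\R$ given by $D(p,q)=d(p,q)$, which is continuous (indeed smooth away from the diagonal), and to combine this with the monotonicity already established in Lemma~\ref{lem:d_p-extrema}: for each fixed $p$, $d_p$ is strictly increasing on the arc $(p,h^{-1}(p))_Y$.

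First I would handle the typical case where $p_0$ is not an axis point and $g_r(p_0)$ lies strictly between $p_0$ and $h^{-1}(p_0)$, i.e.\ $0<r<d(p_0,h^{-1}(p_0))$. Pick the target point $q_0=g_r(p_0)$, and then choose $q^-\in(p_0,q_0)_Y$ and $q^+\in(q_0,h^{-1}(p_0))_Y$ close to $q_0$; by strict monotonicity of $d_{p_0}$ we have $d(p_0,q^-)<r<d(p_0,q^+)$. By continuity of $D$, for $p$ sufficiently close to $p_0$ we still have $d(p,q^-)<r<d(p,q^+)$, and moreover $p$, $q^-$, $q_0$, $q^+$, $h^{-1}(p)$ remain in this clockwise order (using that $h^{-1}$ is continuous, which follows from Lemma~\ref{lem:h-bijective} plus compactness of $Y$, so $h$ is a homeomorphism). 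Since $d_p$ is strictly increasing on $(p,h^{-1}(p))_Y$, the unique solution $g_r(p)$ to $d(p,q)=r$ on that arc must lie in $(q^-,q^+)_Y$. As $q^\pm$ can be taken arbitrarily close to $q_0$, this gives $g_r(p)\to g_r(p_0)$, i.e.\ continuity at $p_0$.

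Then I would check that $g_r$ is actually well-defined and continuous even near the axis points $(0,\pm 1)$, where $d(p,h^{-1}(p))$ attains its infimum value $2$; since we assume $0<r<2$, the point $g_r(p)$ lies strictly inside the arc $(p,h^{-1}(p))_Y$ for every $p$, so there is no degenerate endpoint behavior and the argument above applies uniformly. I would also note that $h^{-1}(p)$ varies continuously with $p$ (homeomorphism), so the choice of the bracketing points $q^\pm$ can be made continuously, and nothing special happens at the four axis points beyond what the general argument covers.

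The main obstacle is bookkeeping the cyclic order: one must make sure that under a small perturbation $p\to p'$, the bracketing points $q^-,q^+$ still lie in the open arc $(p',h^{-1}(p'))_Y$ and in the correct order, so that the monotonicity of $d_{p'}$ on that arc can legitimately be invoked to pin down $g_r(p')$ between them. This is where continuity of $h^{-1}$ (equivalently, $h$ being a homeomorphism, via Lemma~\ref{lem:h-bijective} and compactness) is essential: it guarantees the arc $(p',h^{-1}(p'))_Y$ does not jump, so $q_0$ and a neighborhood of it stay inside it for $p'$ near $p_0$. Once that ordering is secured, everything else is a routine $\epsilon$--$\delta$ argument built on the continuity of the Euclidean distance function, entirely parallel in spirit to the proof of Lemma~\ref{lem:g_r-cont-r}.
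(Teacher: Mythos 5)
Your proof is correct and follows essentially the same bracketing argument as the paper: both locate $g_r(p)$ by squeezing it between two points on the arc at which the distance from $p$ is respectively below and above $r$, then invoke monotonicity of $d_p$ and continuity of the Euclidean distance. The only cosmetic difference is that the paper constructs its bracketing points as $g_{r\pm\delta}(p)$ via Lemma~\ref{lem:g_r-cont-r} and a triangle inequality, whereas you choose abstract bracketing points $q^\pm$ near $q_0$ and separately check, via continuity of $h^{-1}$, that the cyclic ordering survives the perturbation.
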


\begin{proof}
Let $\epsilon > 0$ and $p \in Y$. By Lemma~\ref{lem:g_r-cont-r}, there exists some $\delta > 0$ such that $|r-r'| < \delta$ implies $d(g_r(p),g_{r'}(p)) < \epsilon$; choose such a $\delta$ with $\delta<\epsilon$. By the triangle inequality, we see that if $p'$ satisfies $d(p',p)<\delta$, then $d(p', g_{r-\delta}(p)) < r < d(p', g_{r+\delta}(p))$. Hence by continuity of $d_p\colon (p, h^{-1}(p))_Y\to \R$, there is some point $q'\in(g_{r-\delta}(p), g_{r+\delta}(p))_Y$ (which implies $d(q',g_r(p))<\epsilon$) satisfying $d(p',q') = r$. Thus $g_r\colon Y\to Y$ is continuous.
\end{proof}

\begin{lemma}\label{lem:g_r-cont}
For $Y$ an ellipse of small eccentricity, the function $G\colon Y\times(0,2)\to Y$ defined by $G(p,r)=g_r(p)$ is continuous.
\end{lemma}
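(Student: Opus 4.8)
The plan is to obtain joint continuity from a compactness argument rather than trying to combine the separate continuity statements of Lemmas~\ref{lem:g_r-cont-r} and \ref{lem:g_r-cont-p} (separate continuity in $p$ and in $r$ does not by itself give joint continuity). The key point is that $g_r(p)$ is pinned down by two \emph{closed} conditions, namely $g_r(p)\in[p,h^{-1}(p)]_Y$ and $d(p,g_r(p))=r$, so I will encode these in a single compact space and read off continuity of the resulting inverse map.

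First, $h$ is a continuous bijection of the compact Hausdorff space $Y$ by Lemma~\ref{lem:h-bijective}, hence a homeomorphism, so $h^{-1}\colon Y\to Y$ is continuous. Since $\inf_{p\in Y}d(p,h^{-1}(p))=2>0$, every $p\in Y$ satisfies $p\neq h^{-1}(p)$, so the endpoints $p$ and $h^{-1}(p)$ of the arc $[p,h^{-1}(p)]_Y$ vary continuously and never coincide; consequently
\[ \bar U:=\{(p,q)\in Y\times Y~|~q\in[p,h^{-1}(p)]_Y\} \]
is closed, hence compact, in $Y\times Y$. The map $F\colon\bar U\to Y\times\R$ given by $F(p,q)=(p,d(p,q))$ is continuous, and it is injective because its first coordinate recovers $p$ and, for each fixed $p$, the restriction of $d_p$ to $[p,h^{-1}(p)]_Y$ is injective by Lemma~\ref{lem:d_p-extrema} (that arc contains no interior local extrema of $d_p$, so $d_p$ is strictly monotonic on it). A continuous injection from a compact space to a Hausdorff space is a homeomorphism onto its image $Z:=F(\bar U)$, so $F^{-1}\colon Z\to\bar U$ is continuous.

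To finish, I identify $F^{-1}$ with $(p,r)\mapsto(p,g_r(p))$ over $Y\times(0,2)$. For $(p,r)\in Y\times(0,2)$ we have $0<r<2\le d_p(h^{-1}(p))$, and $d_p$ runs continuously and strictly monotonically from $0$ to $d_p(h^{-1}(p))$ along $[p,h^{-1}(p)]_Y$, so there is a unique $q$ on this arc with $d(p,q)=r$, namely $q=g_r(p)$ by definition. Thus $Y\times(0,2)\subseteq Z$ and $F^{-1}(p,r)=(p,g_r(p))$ there, so $G$ is the composite of the inclusion $Y\times(0,2)\hookrightarrow Z$, the continuous map $F^{-1}$, and projection onto the second coordinate, hence continuous.

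The one step that needs care is the claim that $\bar U$ is closed, which is exactly where the small-eccentricity hypothesis enters through the bound $\inf_p d(p,h^{-1}(p))=2$: because $p\neq h^{-1}(p)$ always, the clockwise arc from $p$ to $h^{-1}(p)$ never degenerates or ``flips,'' and closedness then follows from the fact that $\{(a,b,c)\in Y^3:a\preceq b\preceq c\preceq a\}$ is closed (its complement, strict counterclockwise order of three distinct points, is open) together with continuity of $(p,q)\mapsto(p,q,h^{-1}(p))$. As a fallback, one can instead mimic the proof of Lemma~\ref{lem:g_r-cont-p} directly: sandwich $g_{r_0}(p_0)$ strictly between $q_-=g_{r_0-\delta_1}(p_0)$ and $q_+=g_{r_0+\delta_1}(p_0)$, chosen via Lemma~\ref{lem:g_r-cont-r} so that all of $[q_-,q_+]_Y$ lies within $\epsilon$ of $g_{r_0}(p_0)$; then for $p$ near $p_0$ and $r$ near $r_0$ the triangle inequality gives $d(p,q_-)<r<d(p,q_+)$, so $g_r(p)\in[q_-,q_+]_Y$ by strict monotonicity of $d_p$, where continuity of $h^{-1}$ is again used to guarantee $[q_-,q_+]_Y\subseteq(p,h^{-1}(p))_Y$ for such $p$.
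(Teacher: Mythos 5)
Your argument is correct, and it is a genuinely different route from the paper's. The paper proves joint continuity by citing an external result of Kruse (joint continuity of a separately continuous function on an open subset of the plane that is monotone in one variable), and thus needs both Lemma~\ref{lem:g_r-cont-r} and Lemma~\ref{lem:g_r-cont-p} as inputs. You bypass both the external reference and those lemmas entirely by packaging $g_r(p)$ as the inverse of a continuous injection: you take
\[
\bar U=\{(p,q)\in Y\times Y~|~q\in[p,h^{-1}(p)]_Y\},
\]
observe it is closed (hence compact) because the cyclic-order relation $\{(a,b,c)\in Y^3: a\preceq b\preceq c\preceq a\}$ is closed and $(p,q)\mapsto(p,q,h^{-1}(p))$ is continuous (using $h^{-1}$ continuous, which holds since $h$ is a continuous bijection of a compact Hausdorff space by Lemma~\ref{lem:h-bijective}), and then apply the standard fact that the continuous injection $F(p,q)=(p,d(p,q))$ from a compact space into a Hausdorff space is a homeomorphism onto its image. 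Injectivity of $F$ uses the strict monotonicity of $d_p$ on $[p,h^{-1}(p)]_Y$ from Lemma~\ref{lem:d_p-extrema}, and the identification $F^{-1}(p,r)=(p,g_r(p))$ for $0<r<2$ uses $\inf_p d(p,h^{-1}(p))=2$. What your approach buys is self-containment and robustness: it avoids reliance on Kruse's result, which is somewhat specialized and whose hypotheses must be checked carefully; what the paper's approach buys is brevity at each step, since it reduces directly to the already-established one-variable continuity lemmas. Both are sound; yours is arguably the more transparent argument for a reader not already acquainted with the Kruse theorem.
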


\begin{proof}
Let $(p,r)\in Y\times(0,2)$. Restrict attention to a small enough open neighborhood $U$ of $(p,r)$, homeomorphic to $\R^2$, so that there exists some $q\in Y$ with $G(U)\subseteq Y\setminus\{q\}$. We may now parametrize $Y\setminus\{q\}$ as a subset of $\R$ and view the function $G|_U$ as a real-valued function on an open subset of the plane. Since $G|_U$ is monotonic in one (in fact both) of its variables, it follows from \cite[Proposition~1]{kruse1969joint}, Lemma~\ref{lem:g_r-cont-r}, and Lemma~\ref{lem:g_r-cont-p} that $G|_U$ is jointly continuous. Since this is true for all $(p,r)\in Y\times(0,2)$, it follows that $G$ is jointly continuous.
\end{proof}

\begin{lemma}\label{lem:unique-inscribed}
Let $Y$ be an ellipse of small eccentricity and let $p \in Y$. Then there exists a unique inscribed equilateral triangle in $Y$ containing $p$ as one of its vertices.
\end{lemma}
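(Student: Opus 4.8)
The statement is about inscribed equilateral triangles, so the natural object to track is the map $p\mapsto g_r(p)$ for a fixed scale $r$, since $g_r$ advances a point clockwise by Euclidean distance exactly $r$. An inscribed equilateral triangle with vertices $p\prec q\prec s\prec p$ is exactly a triple with $d(p,q)=d(q,s)=d(s,p)=r$ for some common $r\in(0,2)$; iterating $g_r$ three times, this means $g_r^3(p)=p$ with the side length being $r$. So I would first set up the framework: for each $r\in(0,2)$ the map $g_r\colon Y\to Y$ is well-defined (using $\inf_p d(p,h^{-1}(p))=2$), continuous (Lemma~\ref{lem:g_r-cont-p}), cyclic, and of winding number one; moreover $G(p,r)=g_r(p)$ is jointly continuous (Lemma~\ref{lem:g_r-cont}) and strictly increasing in $r$ with $g_r(p)\to p$ as $r\to 0^+$ and $g_r(p)\to h^{-1}(p)$ as $r\to d_p(h^{-1}(p))^-$.

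Next I would reduce the problem to finding, for the given $p$, a unique $r$ with $g_r^3(p)=p$. Fix $p$ and consider the three-fold iterate. Because each application of $g_r$ pushes a point strictly clockwise and is monotone in $r$, the composite $r\mapsto g_r^3(p)$ is continuous on the interval of admissible $r$ and is strictly monotone in the sense of winding: as $r$ increases from $0$, the point $g_r^3(p)$ sweeps once around $Y$ in the clockwise direction, starting at $p$ (when $r=0^+$) and — as $r$ grows — eventually wrapping all the way around so that $g_r^3(p)$ returns to coincide with $p$ exactly once. The key point is that for $r$ small, $p \prec g_r^3(p)$ with $g_r^3(p)$ close to $p$ on the clockwise side, whereas for $r$ large enough (but still with all three steps admissible) the three jumps together exceed a full loop, so $g_r^3(p)$ has passed $p$; by the intermediate value theorem applied to a lift of $g_{(\cdot)}^3(p)$ to the universal cover $\R$ of $Y$, there is an $r^*$ with $\tilde g_{r^*}^3(\tilde p)=\tilde p+1$, i.e.\ $g_{r^*}^3(p)=p$ with total winding one. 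Strict monotonicity of the lift in $r$ (each $g_r$ strictly increases with $r$, hence so does the composite) gives uniqueness of $r^*$. That the three jumps $p\to g_{r^*}(p)\to g_{r^*}^2(p)\to g_{r^*}^3(p)=p$ all have equal Euclidean length $r^*$, and that the three vertices are distinct and in convex position (forcing a genuine equilateral triangle, not a degenerate configuration), follows from admissibility $r^*<2$ and the fact that each intermediate point lies strictly between $p$ and $h^{-1}(p)$ in the appropriate arc.

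One subtlety I would be careful about is whether $r$ can be pushed large enough that the composite $g_r^3$ achieves winding one before any of the individual steps becomes inadmissible (i.e.\ before $r$ exceeds $d_q(h^{-1}(q))$ for one of the intermediate points $q$). Since the minimal value of $d_q(h^{-1}(q))$ over $q\in Y$ is $2$, and three arcs each of Euclidean chord length at most $2$ can certainly wrap a small-eccentricity ellipse (whose perimeter is not too large — here $1<a\le\sqrt 2$ keeps it modest), this should go through, but it needs a quantitative check: one must verify that there is an admissible $r$ for which the lift $\tilde g_r^3(\tilde p)\ge \tilde p+1$. Concretely, at $r$ close to $2$, each jump covers close to a "diameter" of $Y$, and three such jumps overshoot a full revolution; making this precise (perhaps by evaluating at the extreme points $(0,\pm 1)$ and using the monotonicity/continuity already established) is the main obstacle. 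Once that is in hand, existence and uniqueness both follow from the monotone-continuous lift argument, completing the proof.
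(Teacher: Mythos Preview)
Your uniqueness argument is essentially the paper's: the paper also observes that an inscribed equilateral triangle through $p$ with side $r$ has vertices $p,g_r(p),g_r^2(p)$, and derives a contradiction from $r<r'$ using that $g_r$ is a degree-one bijection (so $g_r(p)\prec g_{r'}(p)$ implies $g_r^2(p)\prec g_{r'}^2(p)$, whence $d(g_r^2(p),p)>d(g_{r'}^2(p),p)$). Your lift formulation is an equivalent packaging of the same monotonicity.

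For existence, however, the ``main obstacle'' you flag is a genuine gap, not a routine check. You need $\tilde g_r^3(\tilde p)>\tilde p+L$ for some $r<2$ and every $p$, i.e.\ that three Euclidean-distance-$r$ jumps overshoot a full revolution before $r$ reaches $2$. For $p=(0,\pm1)$ this is easy (as $r\to 2^-$ each jump approaches a half-perimeter), but for $p$ near $(\pm a,0)$ the chord of length close to $2$ lies near the major axis and can subtend a substantially smaller arc; the bound $|\theta(p,r)-\theta(q,r)|<L$ coming from $g_r$ having degree one is too weak to transfer the estimate from $(0,1)$ to arbitrary $p$. Turning your heuristic ``three near-diameters overshoot'' into a proof requires a quantitative estimate specific to $1<a\le\sqrt2$ that you have not supplied, and it is not clear it can be done without essentially computing something like the explicit triangles $\Delta_{(\pm a,0)}$.

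The paper sidesteps this entirely: for existence it never touches $g_r$. Instead it rotates $Y$ by $\pi/3$ about $p$; the rotated ellipse $R_{\pi/3}(Y)$ meets $Y$ at $p$ and (since both are closed convex curves through $p$) at some other point $q$. Then $q':=R_{-\pi/3}(q)\in Y$, and $\{p,q,q'\}$ is equilateral by construction. This two-line topological argument gives existence with no quantitative input, and the paper then runs your monotonicity idea only for uniqueness.
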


\begin{proof}
For the proof of existence, let $p\in Y$ (see Figure~\ref{fig:EquilateralExists}). Let $R_{\pi/3}$ (resp.\ $R_{-\pi/3}$) be the rotation of $\R^2$ by $\pi/3$ radians counterclockwise (resp.\ clockwise) about point $p$. There must be at least one intersection point in $Y \cap R_{\pi/3}(Y)$ other than $p$; call this point $q$. Let $q'=R_{-\pi/3}(q)$ and note that $q'\in Y$. By construction, we see that $d(p,q)=d(p,q')$ and that $\angle qpq' = \pi/3$, so there exists an inscribed equilateral triangle containing the vertices $p,q,q'\in Y$.

\begin{figure}[h]
\begin{center}
\includegraphics[scale=0.4]{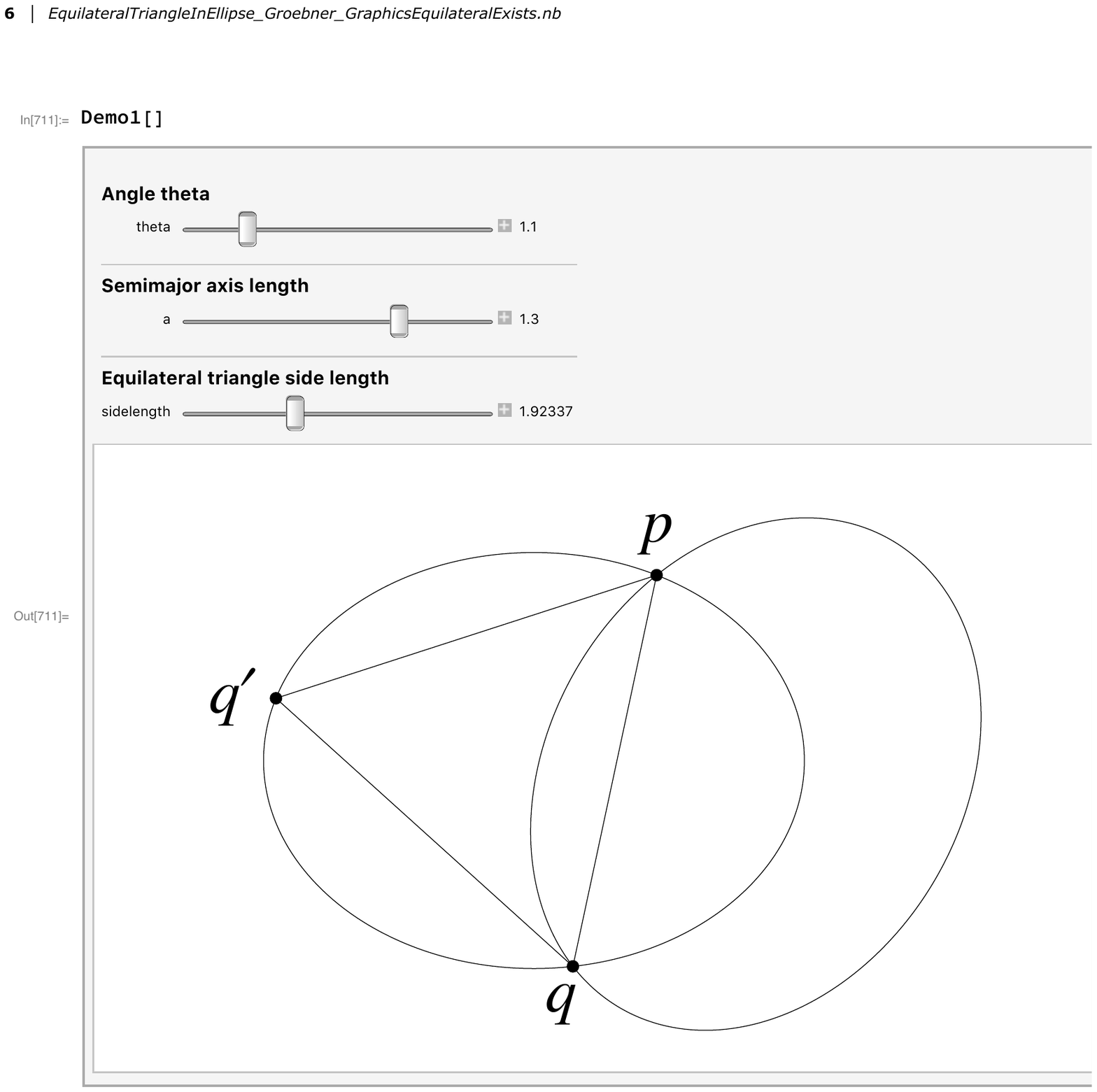}
\end{center}
\caption{Figure for the proof of Lemma~\ref{lem:unique-inscribed}.}
\label{fig:EquilateralExists}
\end{figure}

For uniqueness, suppose for a contradiction that there were two distinct inscribed equilateral triangles in $Y$ with side lengths\footnote{We will learn later in Theorem~\ref{thm:extrema-of-s} that necessarily $r'<2$, but this isn't needed now.} $r<r'$ and vertex sets $\{p,g_r(p),g_r^2(p)\}$ and $\{p,g_{r'}(p),g_{r'}^2(p)\}$. By Lemma~\ref{lem:d_p-extrema} we have $g_r(p),g_{r'}(p)\in(p,h^{-1}(p))_Y$ and $g_r^2(p),g_{r'}^2(p)\in(h^{-1}(p),p)_Y$.
Note $g_r(p)\in(p,g_{r'}(p))_Y$ implies, since $g_r$ is a bijection of degree one, that
\[ g_r^2(p)\in(g_r(p),g_r(g_{r'}(p)))_Y\subseteq(g_r(p),g_{r'}^2(p))_Y. \]
This gives $p\prec h^{-1}(p)\prec g_r^2(p)\prec g_{r'}^2(p)\prec p$. Hence $r=d(g_r^2(p),p)>d(g_{r'}^2(p),p)=r'$, a contradiction.
\end{proof}

\begin{remark}
Our proof of Lemma~\ref{lem:unique-inscribed} relies on the fact that $a\le\sqrt{2}$ and on Lemmas~\ref{lem:h-bijective} and \ref{lem:d_p-extrema}. Simulations suggest that Lemma~\ref{lem:unique-inscribed} is true also for $a\le\sqrt{7/3}$.
\end{remark}

For $Y$ an ellipse of small eccentricity, by Lemma~\ref{lem:unique-inscribed} we may define a continuous function $s \colon Y \to \R$ which maps each $y\in Y$ to the side length of the unique inscribed equilateral triangle containing this vertex. 

\begin{lemma}\label{lem:s-cont}
For $Y$ an ellipse of small eccentricity, the function $s\colon Y\to \R$ is continuous.
\end{lemma}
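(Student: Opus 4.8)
The goal is to establish continuity of the map $s\colon Y\to\R$ sending each point of the ellipse to the side length of the unique inscribed equilateral triangle having that point as a vertex.

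The plan is to express $s$ in terms of the auxiliary functions already shown to be continuous, and to pin it down as the unique solution of an equation. Recall from the discussion preceding Lemma~\ref{lem:unique-inscribed} that for a fixed $0<r<2$ we have a continuous cyclic degree-one bijection $g_r\colon Y\to Y$, and that by Lemma~\ref{lem:g_r-cont} the function $G(p,r)=g_r(p)$ is jointly continuous on $Y\times(0,2)$; consequently $(p,r)\mapsto g_r^2(p)=g_r(g_r(p))$ and $(p,r)\mapsto d(p,g_r^2(p))$ are jointly continuous as well. For each $p\in Y$, the triangle with vertices $p$, $g_r(p)$, $g_r^2(p)$ has two sides of equal length $r$ (by definition of $g_r$, since $d(p,g_r(p))=r$ and, applying $g_r$, also $d(g_r(p),g_r^2(p))=r$), so it is equilateral precisely when $d(p,g_r^2(p))=r$. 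Thus $s(p)$ is the unique $r\in(0,2)$ with $\Phi(p,r):=d(p,g_r^2(p))-r=0$, where uniqueness (and the fact that such $r$ lies in $(0,2)$, hence that $g_r$ and therefore $\Phi$ is defined there) comes from Lemma~\ref{lem:unique-inscribed} together with the upcoming Theorem~\ref{thm:extrema-of-s}; alternatively one can argue directly that $\Phi(p,\cdot)$ is strictly decreasing in $r$, since as $r$ increases $g_r^2(p)$ moves monotonically counterclockwise past $p$ by the ordering property $y\prec g_r(y)\prec g_{r'}(y)\prec y$ for $r<r'$, while the radius $r$ being subtracted increases.

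With $s$ characterized implicitly by the jointly continuous function $\Phi\colon Y\times(0,2)\to\R$ and the strict monotonicity of $\Phi(p,\cdot)$, continuity of $s$ follows from a standard implicit-function-type argument for continuous (not necessarily differentiable) functions: fix $p_0\in Y$ and $\epsilon>0$ with $[s(p_0)-\epsilon,s(p_0)+\epsilon]\subseteq(0,2)$. Strict monotonicity of $\Phi(p_0,\cdot)$ gives $\Phi(p_0,s(p_0)-\epsilon)>0>\Phi(p_0,s(p_0)+\epsilon)$, and joint continuity of $\Phi$ yields a neighborhood $U$ of $p_0$ on which $\Phi(p,s(p_0)-\epsilon)>0>\Phi(p,s(p_0)+\epsilon)$; since $\Phi(p,\cdot)$ is continuous and has a unique zero, that zero $s(p)$ must lie in $(s(p_0)-\epsilon,s(p_0)+\epsilon)$ for all $p\in U$. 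Hence $s$ is continuous at $p_0$, and $p_0$ was arbitrary.

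The main obstacle is bookkeeping about the domain: to even speak of $g_r(p)$ we need $0<r<2$, so before running the monotonicity/continuity argument one must know that the side length of the inscribed equilateral triangle through every $p$ lies strictly below $2$. One clean way around this is to run the argument locally in $r$ around the known value $s(p_0)$ (which is automatically in $(0,2)$ once we know $s$ is finite and positive, and below $2$ by the cited extremal result), so that $\Phi$ only ever needs to be evaluated for $r$ in a compact subinterval of $(0,2)$; this sidesteps any need for global control of $s$ and makes the continuity argument entirely self-contained given Lemmas~\ref{lem:d_p-extrema}, \ref{lem:g_r-cont}, and \ref{lem:unique-inscribed}.
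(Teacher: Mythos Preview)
Your approach is essentially the paper's: both characterize $s(p)$ implicitly (the paper via $g_r^3(p)=p$, you via the equivalent $\Phi(p,r)=d(p,g_r^2(p))-r=0$) and then run a monotonicity-plus-joint-continuity argument.

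There is a genuine gap, however. Your claim that $\Phi(p,\cdot)$ is strictly decreasing on $(0,2)$ is false. For small $r$ the point $g_r^2(p)$ still lies on the arc $(p,h^{-1}(p))_Y$, where $d_p$ is increasing, and $d(p,g_r^2(p))\approx 2r$; hence $\Phi(p,r)\approx r$ is \emph{increasing} near $0$. (On the unit circle one computes $\Phi(p,r)=r(\sqrt{4-r^2}-1)$, which rises from $0$, then falls.) Your verbal justification also has the direction reversed---as $r$ increases $g_r^2(p)$ moves \emph{clockwise}, not counterclockwise---and in any case monotone cyclic motion of $g_r^2(p)$ does not by itself yield monotonicity of the Euclidean distance $d(p,g_r^2(p))$. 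Since the sign change $\Phi(p_0,s(p_0)-\epsilon)>0>\Phi(p_0,s(p_0)+\epsilon)$ is the crux of your implicit-function step, this is a real hole.

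The repair is to argue locally. At $r=s(p_0)$ the third vertex $g_r^2(p_0)$ lies on the arc $(h^{-1}(p_0),p_0)_Y$, where $d_{p_0}$ is strictly decreasing as one moves clockwise (Lemma~\ref{lem:d_p-extrema}). Since $g_r^2(p_0)$ moves clockwise as $r$ increases, $d(p_0,g_r^2(p_0))$---and hence $\Phi(p_0,\cdot)$---is strictly decreasing for $r$ near $s(p_0)$, giving the required sign change; combined with uniqueness of the zero from Lemma~\ref{lem:unique-inscribed}, your argument then goes through. This local monotonicity is exactly what the paper proves, phrased as the cyclic-order statement $g_{r-\epsilon}^3(p)\prec p\prec g_{r+\epsilon}^3(p)$. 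Finally, invoking Theorem~\ref{thm:extrema-of-s} to bound $s(p_0)<2$ is circular (its proof uses continuity of $s$); you only need $s(p_0)<d(p_0,h^{-1}(p_0))$, which follows directly from Lemma~\ref{lem:d_p-extrema} since the equilateral triangle has two distinct vertices at distance $s(p_0)$ from $p_0$.
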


\begin{proof}
Let $p\in Y$. Since $g_r(p)$ is the unique solution $q$ to $d_p(q) = r$ on $(p, h^{-1}(p))_Y$, and since $d_p$ is strictly increasing along this interval, we see that $g_r(p)$ is monotonic in $r$, that is, for $0 < r < r' < d(p, h^{-1}(p))$ we have $p \prec g_r(p) \prec g_{r'}(p) \prec p$. It follows that for $|r - r'|$ sufficiently small, we have 
\begin{equation}\label{eq:monotonic}
g_{r}(p) \prec g_r^2(p) \prec g_{r'}^2(p) \prec g_{r}(p)\quad\mbox{and}\quad g_r^2(p) \prec g_r^3(p) \prec g_{r'}^3(p) \prec g_r^2(p).
\end{equation}

Let $p \in Y$ and $r = s(p)$. Let $\epsilon>0$ be an arbitrarily small constant satisfying $0<r-\epsilon$ and $r+\epsilon<d(p,h^{-1}(p))$; by \eqref{eq:monotonic} we have $g_r^2(p) \prec g_{r-\epsilon}^3(p) \prec p \prec g_{r+\epsilon}^3(p) \prec g_r^2(p)$. By continuity of $g_r^3\colon Y\to Y$, for $\tilde{p}\in Y$ with $d(p,\tilde{p})$ sufficiently small we have $g_r^2(p) \prec g_{r-\epsilon}^3(\tilde{p}) \prec \tilde{p} \prec g_{r+\epsilon}^3(\tilde{p}) \prec g_r^2(p)$. The monotonicity and continuity of $g_r^3$ then imply there exists some $r^*\in(r-\epsilon,r+\epsilon)$ with $g_{r^*}^3(\tilde{p}) = \tilde{p}$. Hence $s(\tilde{p})=r^*$ with $|s(p)-s(\tilde{p})|=|r - r^*| < \epsilon$, so $s$ is continuous.
\end{proof}

One can verify that
\begin{align*}
\Delta_{(a,0)}&=\Bigl\{(a,0),\Bigl(-\frac{3a-a^3}{a^2+3}, \frac{2\sqrt{3}a}{a^2+3}\Bigr),\Bigl(-\frac{3a-a^3}{a^2+3}, -\frac{2\sqrt{3}a}{a^2+3}\Bigr)\Bigr\} \\
\Delta_{(-a,0)}&=\Bigl\{(-a,0),\Bigl(\frac{3a-a^3}{a^2+3}, \frac{2\sqrt{3}a}{a^2+3}\Bigr),\Bigl(\frac{3a-a^3}{a^2+3}, -\frac{2\sqrt{3}a}{a^2+3}\Bigr)\Bigr\}
\end{align*}
are inscribed equilateral triangles of side length $r_1=s((\pm a,0))=\frac{4\sqrt{3}a}{a^2+3}$, and that
\begin{align*}
\Delta_{(0,1)}&=\Bigl\{(0,1),\Bigl(\frac{2\sqrt{3}a^2}{3a^2+1}, -\frac{3a^2-1}{3a^2+1}\Bigr),\Bigl(-\frac{2\sqrt{3}a^2}{3a^2+1}, -\frac{3a^2-1}{3a^2+1}\Bigr)\Bigr\} \\
\Delta_{(0,-1)}&=\Bigl\{(0,-1),\Bigl(\frac{2\sqrt{3}a^2}{3a^2+1}, \frac{3a^2-1}{3a^2+1}\Bigr),\Bigl(-\frac{2\sqrt{3}a^2}{3a^2+1}, \frac{3a^2-1}{3a^2+1}\Bigr)\Bigr\}
\end{align*}
are inscribed equilateral triangles of side length $r_2=s((0,\pm 1))=\frac{4\sqrt{3}a^2}{3a^2+1}$; see Figure~\ref{fig:r1r2}. Furthermore we have $r_1 < r_2 < 2$.

\begin{figure}[h]
\begin{center}
\includegraphics[scale=0.6]{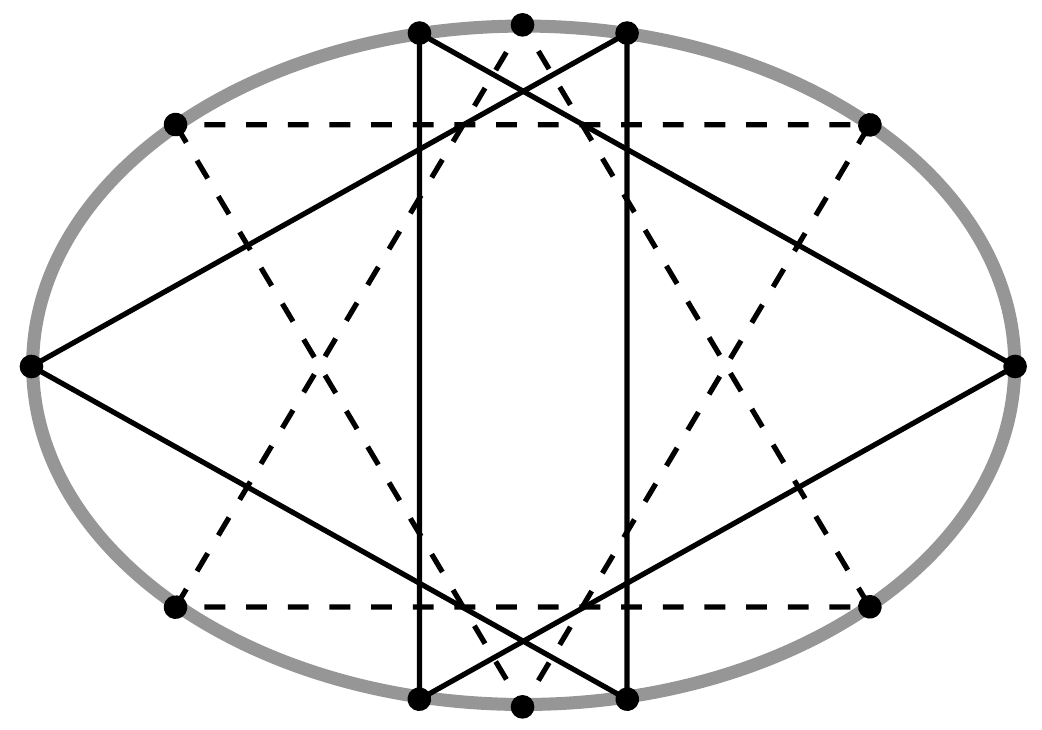}
\end{center}
\caption{The inscribed equilateral triangles $\Delta_{(\pm a,0)}$ drawn with solid lines have radius $r_1$, and the inscribed equilateral triangles $\Delta_{(0,\pm 1)}$ drawn with dashed lines have radius $r_2$.}
\label{fig:r1r2}
\end{figure}

\begin{theorem}\label{thm:extrema-of-s}
For $Y$ an ellipse of small eccentricity, the six vertices in the two inscribed equilateral triangles $\Delta_{(\pm a,0)}$ are global minima of $s\colon Y \to \R$, and the six vertices in the two inscribed equilateral triangles $\Delta_{(0,\pm 1)}$ are global maxima. There are no other local extrema of $s$.
\end{theorem}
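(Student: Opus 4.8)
The plan is to exploit the large symmetry group of $s$ to reduce the statement to a single arc of $Y$, and then to locate all critical points of $s$ by a Lagrange‑multiplier computation on the one‑parameter family of inscribed equilateral triangles.

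\textbf{Symmetries.} By Lemma~\ref{lem:unique-inscribed}, $s$ is constant on the three vertices of each inscribed equilateral triangle, so the ``next vertex'' map $T\colon Y\to Y$, $T(p)=g_{s(p)}(p)$, satisfies $T^{3}=\mathrm{id}$ and $s\circ T=s$. It is continuous by Lemmas~\ref{lem:s-cont} and~\ref{lem:g_r-cont}, hence a homeomorphism, and it is homotopic to $\mathrm{id}_{Y}$ through $p\mapsto g_{\lambda s(p)}(p)$ ($\lambda\in(0,1]$), hence orientation‑preserving. Any rigid motion of $\R^{2}$ preserving $Y$ carries inscribed equilateral triangles to inscribed equilateral triangles, so $s$ is also invariant under the reflections $\sigma_{x},\sigma_{y}$ of $Y$ across the coordinate axes and under $p\mapsto -p=\sigma_{x}\sigma_{y}$. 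The group $\Gamma=\langle T,\sigma_{x},\sigma_{y}\rangle$ acting on $Y\cong S^{1}$ is dihedral of order $12$: it contains the cyclic rotation subgroup $\langle T\rangle\times\langle -\mathrm{id}\rangle$ of order $6$, on which $\sigma_{x}$ (reversing the clockwise order) acts by inversion. The three $\Gamma$‑conjugates of $\sigma_{x}$ fix exactly the six points $T^{j}(\pm a,0)$, i.e.\ the vertices of $\Delta_{(\pm a,0)}$, and the three $\Gamma$‑conjugates of $\sigma_{y}$ fix exactly the vertices of $\Delta_{(0,\pm1)}$. Thus the twelve vertices in the statement are precisely the fixed points of reflections in $\Gamma$, hence critical points of $s$. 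They cut $Y$ into twelve open arcs; since $\Gamma$ acts freely on these arcs and $|\Gamma|=12$, the arcs form a single $\Gamma$‑orbit, and as $s$ is $\Gamma$‑invariant it suffices to show $s$ has no critical point in one of them — say the arc from $(a,0)$ to the vertex $B_{1}=\bigl(\tfrac{2\sqrt3a^{2}}{3a^{2}+1},\tfrac{3a^{2}-1}{3a^{2}+1}\bigr)$ of $\Delta_{(0,-1)}$, which (comparing the given coordinates, and using $r_{1}<r_{2}$ so none of the twelve points collide) is one of the twelve arcs.

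\textbf{The critical‑triangle equation.} The map sending a point to its inscribed equilateral triangle is a local diffeomorphism onto the $1$‑manifold $E$ of inscribed equilateral triangles, so $p$ is a critical point of $s|_{Y}$ if and only if the triangle $\{p_{0},p_{1},p_{2}\}$ through it is a critical point of the side‑length function on $E$. Applying Lagrange multipliers to ``extremize $|p_{0}-p_{1}|^{2}$ subject to $|p_{0}-p_{1}|^{2}=|p_{1}-p_{2}|^{2}=|p_{2}-p_{0}|^{2}$'' and evaluating the resulting $3\times3$ determinant, one finds that the critical triangles are exactly those satisfying
\[
\prod_{i=0}^{2}\bigl((p_{i+1}-p_{i})\cdot t_{i}\bigr)=-\prod_{i=0}^{2}\bigl((p_{i-1}-p_{i})\cdot t_{i}\bigr),
\]
where $t_{i}$ is the tangent to $Y$ at $p_{i}$ and indices are mod $3$; equivalently, the triangle is a \emph{neutral} period‑$3$ orbit of $g_{r}$ (with $r$ its side length), i.e.\ $g_{r}'(p_{0})g_{r}'(p_{1})g_{r}'(p_{2})=1$. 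Any inscribed equilateral triangle with an axis of symmetry satisfies this relation, since the reflection interchanges the two products; in particular $\Delta_{(\pm a,0)}$ and $\Delta_{(0,\pm1)}$ do.

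\textbf{Finishing.} The main step — and the one I expect to be the real obstacle — is to show these four are the \emph{only} solutions, i.e.\ that the displayed equation holds for no inscribed equilateral triangle having a vertex in the chosen open arc. I would parametrize the inscribed equilateral triangles by the angular coordinate of one vertex, substitute into the critical‑triangle equation, clear denominators, factor out the four known solutions, and show the remaining factor is nonvanishing on the relevant range when $1<a\le\sqrt2$; this is where the small‑eccentricity hypothesis enters (together with Lemmas~\ref{lem:h-bijective}, \ref{lem:d_p-extrema}, \ref{lem:unique-inscribed}). Granting this, $s$ has exactly the twelve critical points above, carrying only the two values $r_{1}<r_{2}$. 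Since $s$ is continuous on $S^{1}$, strictly monotone between consecutive critical points, and symmetric about each of the twelve reflection‑fixed points, these twelve points are alternately local minima and local maxima; a local maximum cannot have value $r_{1}$ (its neighbouring critical points would be local minima with value strictly below $r_{1}$, which is impossible), so the six vertices of $\Delta_{(0,\pm1)}$ are the local maxima and the six vertices of $\Delta_{(\pm a,0)}$ are the local minima. Hence $\min s=r_{1}$, $\max s=r_{2}$, there are no other local extrema, and the theorem follows.
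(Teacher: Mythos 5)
Your approach is genuinely different from the paper's. The paper bounds $|s^{-1}(r)|\le 12$ for every $r$ by writing down the six polynomial equations that define an inscribed equilateral triangle of side $r$, eliminating variables (via Sage) to produce a degree-$6$ polynomial in one coordinate, and then combining this cardinality bound with the symmetry of $s$ and the interleaving of the vertices of $\Delta_{(\pm a,0)}$ and $\Delta_{(0,\pm1)}$ to force strict monotonicity of $s$ between consecutive known extrema. You instead propose a Lagrange-multiplier characterization of critical inscribed equilateral triangles, combined with $\Gamma$-equivariance of $s$ to reduce to one arc. These are not the same idea: in particular the paper never needs to differentiate $s$ (only continuity is established, in Lemma~\ref{lem:s-cont}), whereas your argument requires $s$ to be smooth, the ``next vertex'' map $T$ to be a diffeomorphism, and the set of inscribed equilateral triangles to be a smooth $1$-manifold $E$ with $s$ a submersion onto it --- none of which you justify, though all are plausible by an implicit-function-theorem argument on the system~\eqref{eq:system}.

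The more serious issue is that you explicitly leave the central step undone. You identify showing that the critical-triangle equation has \emph{no} solutions beyond the four axis-symmetric triangles as ``the real obstacle,'' and then only outline a plan (``I would parametrize\dots, clear denominators, factor\dots, show the remaining factor is nonvanishing'') rather than carrying it out. That step \emph{is} the theorem, and it is precisely the role the elimination-theory/Sage computation plays in the paper's proof; the small-eccentricity hypothesis $1<a\le\sqrt{2}$ is supposed to enter there, and as written nothing in your argument excludes extra critical triangles when $a$ is near $\sqrt{2}$. What you have proved (modulo the smoothness issues above) is that the twelve named vertices are critical and that, \emph{if} they are the only critical points, the min/max pattern and the absence of other extrema follow; the existence question is left open, so the proposal has a genuine gap.
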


\begin{proof}
We first show that given any $r>0$, there are at most twelve points $p\in Y$ satisfying $s(p)=r$. Let $(x_1,y_1),(x_2,y_2)\in Y$ be two points in the ellipse at distance $r$ apart. We consider all points $(x,y)\in\R^2$ at distance $r$ from each of $(x_1,y_1)$ and $(x_2,y_2)$. The triangle $\{(x_1,y_1),(x_2,y_2),(x,y)\}$ is then an inscribed equilateral triangle in $Y$ if and only if $(x,y)\in Y$.

Consider the following system of polynomial equations in the variables $x_1,y_1,x_2,y_2,x,y,a,r\in\R$:
\begin{align}\label{eq:system}
(x_1/a)^2+y_1^2-1&=0 &(x_1-x_2)^2+(y_1-y_2)^2-r^2=0\nonumber\\
(x_2/a)^2+y_2^2-1&=0 &(x-x_1)^2+(y-y_1)^2-r^2=0\\
(x/a)^2+y^2-1&=0 &(x-x_2)^2+(y-y_2)^2-r^2=0\nonumber
\end{align}
The three equations on the left encode the requirement that $(x_1,y_1)$, $(x_2,y_2)$, and $(x,y)$ are points in $Y$, and the three equations on the right encode the requirement that $\{(x_1,y_1),(x_2,y_2),(x,y)\}$ is an equilateral triangle. One can use elimination theory to eliminate the variables $x_1,y_1,x_2,y_2,y$ from \eqref{eq:system}, producing a polynomial $p(x,a,r)$ such that any solution to \eqref{eq:system} is also a root of $p$. We use the mathematics software system Sage to compute
\begin{align*} p(x,a,r)&=12288a^{12}r^4 - 6912a^{12}r^6 - 10752a^{10}r^6 + 13568a^8r^6 + 1296a^{12}r^8 \\
&+ 4032a^{10}r^8 - 2208a^8r^8 - 6720a^6r^8 + 3600a^4r^8 - 81a^{12}r^{10} \\
&- 378a^{10}r^{10} - 63a^8r^{10} + 1044a^6r^{10} - 63a^4r^{10} - 378a^2r^{10} - 81r^{10} \\
&- 36864a^{10}r^4x^2 + 36864a^8r^4x^2 + 13824a^{10}r^6x^2 - 4608a^8r^6x^2 \\
&+ 16896a^6r^6x^2 - 26112a^4r^6x^2 - 1296a^{10}r^8x^2 - 432a^8r^8x^2 \\
&-4512a^6r^8x^2 + 4512a^4r^8x^2 + 432a^2r^8x^2 + 1296r^8x^2 + 36864a^8r^4x^4 \\
&- 73728a^6r^4x^4 + 36864a^4r^4x^4 - 6912a^8r^6x^4 + 15360a^6r^6x^4 \\
&- 16896a^4r^6x^4 + 15360a^2r^6x^4 - 6912r^6x^4 - 12288a^6r^4x^6\\
& + 36864a^4r^4x^6 - 36864a^2r^4x^6 + 12288r^4x^6.
\end{align*}
Note $p$ is a polynomial in $x$ of degree 6. It follows that given any $a$ and $r$, a solution to the system of equations \eqref{eq:system} can contain at most 6 distinct values for $x$, and hence at most 12 distinct points $p=(x,y)\in Y$ satisfying $s(p)=r$.

We next show that the extrema of $s\colon Y\to\R$ are as claimed. Symmetry of $s$ about the horizontal and vertical axes, along with the fact that the cardinality of each set $s^{-1}(r)$ is finite, 
%\footnote{This rules out infinite oscillations such as in the topologists' sine curve.}
gives that the points $(\pm a,0),(0,\pm 1)$ are local extrema of $s$. Since $s(p)=s(g_{s(p)}(p))=s(g_{s(p)}^2(p))$, it follows that all twelve vertices in the triangles $\Delta_{(\pm a,0)},\Delta_{(0,\pm 1)}$ are local extrema. Since the six vertices of $\Delta_{(\pm a,0)}$ are interleaved with the six vertices of $\Delta_{(0,\pm 1)}$, it follows from the intermediate value theorem that for all $r_1<r<r_2$ we have $|s^{-1}(r)|\ge12$. Since $|s^{-1}(r)|\le12$ for all $r>0$, it follows that $s$ is either monotonically increasing or monotonically decreasing between any two adjacent vertices of the triangles $\Delta_{(\pm a,0)}$ and $\Delta_{(0,\pm 1)}$. Hence the vertices of $\Delta_{(\pm a,0)}$ are global minima, the vertices of $\Delta_{(0,\pm 1)}$ are global maxima, and there are no other local extrema of $s$.
\end{proof}

\section{Vietoris--Rips complexes of ellipses of small eccentricity}\label{sec:VR-of-ellipse}\label{sec:VR}

We restrict attention in this section to $Y=\{(x,y)\in \R^2~|~(x/a)^2+y^2=1\}$ an ellipse of small eccentricity, meaning $1<a\le\sqrt{2}$. For $X \subseteq Y$ and $0<r<2$, we orient the edges of graph $\vrless{X}{r}$ (resp.\ $\vrleq{X}{r}$) in a clockwise fashion by specifying that $p \diredge p'$ is a directed edge if $p' \in (p,g_r(p))_Y$ (resp.\ $p' \in (p,g_r(p)]_Y$).

\begin{lemma}\label{lem:ellipse-cyclic}
Let $Y$ be an ellipse of small eccentricity, let $X \subseteq Y$, and let $0<r<2$. Then $\vrless{X}{r}$ and $\vrleq{X}{r}$ are cyclic graphs.
\end{lemma}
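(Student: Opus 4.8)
The goal is to verify that $\vrless{X}{r}$ (and $\vrleq{X}{r}$) satisfies the defining property of a cyclic graph: whenever there is a directed edge $p \diredge p''$ and $p \prec p' \prec p'' \prec p$, then both $p \diredge p'$ and $p' \diredge p''$ are directed edges. With the chosen orientation, $p \diredge p''$ means $p'' \in (p, g_r(p))_Y$ in the strict case (resp.\ $p'' \in (p, g_r(p)]_Y$ in the non-strict case). The plan is to reduce everything to two elementary facts about the ellipse established in Section~\ref{sec:geometric-lemmas}: that $d_p$ is strictly increasing on the arc $[p, h^{-1}(p)]_Y$ (Lemma~\ref{lem:d_p-extrema}), and that $g_r$ is cyclic and monotonic in $r$, i.e.\ $y_0 \prec y_1 \prec y_2 \prec y_0$ implies $g_r(y_0) \prec g_r(y_1) \prec g_r(y_2) \prec g_r(y_0)$, and $0 < r < r' < 2$ implies $y \prec g_r(y) \prec g_{r'}(y) \prec y$.

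\textbf{Step 1: the edge $p \diredge p'$.} Since $p \prec p' \prec p'' \preceq g_r(p) \prec p$ (using $p'' \in (p, g_r(p)]_Y$), the point $p'$ lies on the arc $(p, g_r(p))_Y \subseteq (p, h^{-1}(p))_Y$. By the monotonicity of $d_p$ on $[p, h^{-1}(p)]_Y$ (Lemma~\ref{lem:d_p-extrema}) we get $d(p,p') < d(p, g_r(p)) = r$, so $p' \in (p, g_r(p))_Y$ and $p \diredge p'$ is a directed edge in both $\vrless{X}{r}$ and $\vrleq{X}{r}$.

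\textbf{Step 2: the edge $p' \diredge p''$.} We must show $p'' \in (p', g_r(p'))_Y$ (strict case) or $p'' \in (p', g_r(p')]_Y$ (non-strict case); since $p' \prec p''$ already holds, it suffices to show $p'' \preceq g_r(p')$ with the appropriate strictness, equivalently $d(p', p'') \leq r$ (resp.\ $<r$) together with $p''$ lying before $h^{-1}(p')$. The key observation is that applying the cyclic map $g_r$ to the configuration $p \prec p' \prec p'' \prec p$ yields $g_r(p) \prec g_r(p') \prec g_r(p'') \prec g_r(p)$, and combining this with $p \prec p' \prec g_r(p) \prec p$ (which holds since $p'$ was an out-neighbor of $p$), the cyclic ordering forces $p \prec p' \prec g_r(p) \preceq g_r(p')$, so in particular $p'' \preceq g_r(p) \preceq g_r(p')$; one then checks $p''$ precedes $h^{-1}(p')$ along the same arc and invokes the monotonicity of $d_{p'}$ to conclude $d(p', p'') \leq d(p', g_r(p')) = r$. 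In the strict Vietoris--Rips case $p'' \prec g_r(p)$, giving the strict inequality; in the non-strict case $p'' \preceq g_r(p)$ suffices.

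\textbf{Main obstacle.} The only delicate point is confirming that $p''$ lies on the arc $[p', h^{-1}(p')]_Y$ where $d_{p'}$ is increasing, rather than wrapping past $h^{-1}(p')$; this is where connectedness of metric balls (the hypothesis $a \le \sqrt{2}$, via Lemma~\ref{lem:d_p-extrema}) is essential, as it guarantees $g_r(p')$ and hence the relevant arc behave as expected and that $d_{p'} \le r$ on the whole arc $[p', g_r(p')]_Y$. Everything else is bookkeeping with the clockwise cyclic order $\preceq$, and the argument is symmetric in the two orientation conventions, so $\vrless{X}{r}$ and $\vrleq{X}{r}$ are handled simultaneously with only the strict/non-strict distinction tracked through the inequalities.
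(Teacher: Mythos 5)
Your proof is correct and follows essentially the same route as the paper: both reduce the cyclic-graph condition to the orientation convention $p \diredge q$ iff $q \in (p,g_r(p))_Y$ (resp.\ $q\in (p,g_r(p)]_Y$), combined with the fact that $g_r$ is a degree-one cyclic bijection. The paper's argument is tighter---once $p'' \in (p,g_r(p)]_Y$ and $p \prec p' \prec p'' \prec p$ are given, $p' \in (p,g_r(p))_Y$ is immediate from the arc inclusion and $p'' \in (p',g_r(p'))_Y$ follows at once from cyclicity of $g_r$---so the detours through $d_p$-monotonicity and $h^{-1}(p')$ in your Steps 1--2 are avoidable, though not incorrect.
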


\begin{proof}
We can think of the vertex set $X$ as a subset of $S^1$, simply by fixing an orientation-preserving homeomorphism $Y\to S^1$. If $v\diredge u$ is a directed edge of $\vrleq{X}{r}$, then $u\in(v,g_r(v)]_Y$. If $w\in X$ satisfies $v\prec w\prec u\prec v$, then we have $w\in(v,g_r(v)]_Y$ by definition and $u\in(w,g_r(w)]_Y$ since $g_r$ is cyclic. Hence we have directed edges $v\diredge w$ and $w\diredge u$ in $\vrleq{X}{r}$, and therefore $\vrleq{X}{r}$ is a cyclic graph. The proof that $\vrless{X}{r}$ is a cyclic graph is identical after replacing each half-open interval with an open interval.
\end{proof}

For $X\subseteq Y$ finite it follows from Proposition~\ref{prop:vrchtpy} that simplicial complex $\vrc{X}{r}$ is homotopy equivalent to an odd sphere or a wedge of even spheres of the same dimension.

The following notation will prove convenient. Given a point $p\in Y$, let $p'=g_{s(p)}(p)$ and let $p''=g_{s(p)}^2(p)$. Fix $\frac{4\sqrt{3}a}{a^2+3}=r_1<r<r_2=\frac{4\sqrt{3}a^2}{3a^2+1}$. By Theorem~\ref{thm:extrema-of-s} and the intermediate value theorem, there are cyclically ordered points
\begin{equation}\label{eq-z}
z_0\prec z_1\prec z_2\prec z_3\prec z'_0\prec z'_1\prec z'_2\prec z'_3\prec z''_0\prec z''_1\prec z''_2\prec z''_3\prec z_0
\end{equation}
in $Y$ such that
\begin{itemize}
\item $s(z_i)=s(z'_i)=s(z''_i)=r$ for $i=0,1,2,3$.
\item $I_0=(z_0,z_1)_Y\cup(z'_0,z'_1)_Y\cup(z''_0,z''_1)_Y$ and $I_2=(z_2,z_3)_Y\cup(z'_2,z'_3)_Y\cup(z''_2,z''_3)_Y$ are each invariant sets of three equivalence classes of fast points for dynamics $g_r:Y\to Y$, meaning that $s(p)<r$ for all points $p$ in these intervals.
\item $I_1=(z_1,z_2)_Y\cup(z'_1,z'_2)_Y\cup(z''_1,z''_2)_Y$ and $I_3=(z_3,z'_0)_Y\cup(z'_3,z''_0)_Y\cup(z''_3,z_0)_Y$ consist entirely of slow points for dynamics $g_r:Y\to Y$, meaning that $s(p)>r$ for all points $p$ in these intervals.
\end{itemize}

We say a subset $X\subseteq Y$ is $\epsilon$-dense if for each point $y\in Y$, there exists some point $x\in X$ with Euclidean distance $d(x,y)\le\epsilon$.

\begin{theorem}
Let $Y$ be an ellipse of small eccentricity. For any sufficiently dense finite sample $X \subseteq Y$, graph $\vr{X}{r}$ for $r_1 < r < r_2$ has at least two periodic orbits of length three, and hence $\vrc{X}{r}\simeq\bigvee^k S^2$ for some $k \ge 1$.
\end{theorem}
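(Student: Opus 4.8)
The plan is to translate everything into the language of cyclic dynamics and the circle homeomorphism $g_r\colon Y\to Y$. Since $r_1<r<r_2$, Theorem~\ref{thm:extrema-of-s} shows $s\colon Y\to\R$ surjects onto $[r_1,r_2]$, so $g_r$ has period-$3$ orbits of winding number $1$, namely the vertex sets of the inscribed equilateral triangles of side length $r$; hence $g_r$ is an orientation-preserving circle homeomorphism of rotation number $\tfrac13$. For finite $X$ the graph $\vr{X}{r}$ is a finite cyclic graph (Lemma~\ref{lem:ellipse-cyclic}) with dynamics $f$, and $f(x)\preceq g_r(x)$ for every $x\in X$; moreover, if $X$ is $\epsilon$-dense then $\ddist(f(x),g_r(x))=O(\epsilon)$ uniformly in $x$, the point being that the geometric lemmas of Section~\ref{sec:geometric-lemmas}, together with compactness and $r<2\le d(x,h^{-1}(x))$, keep $g_r(x)$ uniformly away from the unique critical point $h^{-1}(x)$ of $d_x$, so $r\mapsto g_r(p)$ and $g_r$ are uniformly Lipschitz on the relevant range. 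The goal is to prove (a) $\wf(\vr{X}{r})=\tfrac13$ and (b) the dynamics $f$ has at least two period-$3$ orbits; then Proposition~\ref{prop:vrchtpy} with $l=1$ gives $\vrc{X}{r}\simeq\bigvee^{\numorb-1}S^2$ with $\numorb\ge 2$, i.e.\ $\vrc{X}{r}\simeq\bigvee^k S^2$ for $k=\numorb-1\ge 1$.

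For (a): choosing $r'\in(r_1,r)$, there is an inscribed equilateral triangle of side length $r'$; for $X$ sufficiently dense we may pick points of $X$ within an arbitrarily small distance of its three vertices, and since $r'<r$ these points form a directed $3$-cycle winding once around $Y$, i.e.\ a cyclic homomorphism $\cnk{3}{1}\to\vr{X}{r}$, so $\wf(\vr{X}{r})\ge\tfrac13$. For the reverse inequality, lift $f$ and $g_r$ to weakly monotone self-maps of $\R$ commuting with $+1$; from $f(x)\preceq g_r(x)$ we get $\tilde f^m(\tilde x)\le\tilde g_r^m(\tilde x)$ for all $m\ge 1$ by monotonicity, and since $\gamma_m(x)=\tilde f^m(\tilde x)-\tilde x$ for a finite cyclic graph, Lemma~\ref{lem:wf-as-limit} gives $\wf(\vr{X}{r})=\lim_m\gamma_m(x)/m\le\lim_m(\tilde g_r^m(\tilde x)-\tilde x)/m=\tfrac13$. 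Thus $\wf(\vr{X}{r})=\tfrac13$, and in particular every periodic orbit of $f$ has length $3$.

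For (b) I would use the following discrete intermediate value principle: if $\tilde X\subseteq\R$ is $\Z$-periodic with finite quotient and $F\colon\tilde X\to\tilde X$ is weakly order-preserving with $F(\tilde x+1)=F(\tilde x)+1$, then whenever $\tilde a<\tilde b$ in $\tilde X$ satisfy $F(\tilde a)>\tilde a$ and $F(\tilde b)<\tilde b$, the map $F$ has a fixed point in $[\tilde a,\tilde b]\cap\tilde X$ --- for otherwise there would be consecutive $\tilde p<\tilde q$ in that range with $F(\tilde p)>\tilde p$ and $F(\tilde q)<\tilde q$, forcing $\tilde q\le F(\tilde p)\le F(\tilde q)\le\tilde p$, a contradiction. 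Apply this with $F=\tilde f^3-1$, whose fixed points are exactly the period-$3$ points of $f$. By the structure fixed before the theorem statement, moving clockwise through $z_1$ one passes from the fast interval $I_0$ (where $s<r$) to the slow interval $I_1$ (where $s>r$), and by Theorem~\ref{thm:extrema-of-s} this crossing is transverse; hence there are points $a\prec z_1\prec b$ in $Y$, at a fixed distance from $z_1$, with $\tilde g_r^3(\tilde a)-\tilde a$ above $1$ and $\tilde g_r^3(\tilde b)-\tilde b$ below $1$ by fixed positive margins. Since $\tilde f^3\le\tilde g_r^3$ (for $b$) and $\tilde f^3$ follows $\tilde g_r^3$ within $O(\epsilon)$ over three steps (for $a$), for $X$ dense enough there are $a',b'\in X$ near $a,b$ with $\tilde f^3(\tilde a')>\tilde a'+1$ and $\tilde f^3(\tilde b')<\tilde b'+1$, so the principle produces a period-$3$ point of $f$ in a small arc about $z_1$; repeating at $z_3$ (also a fast-to-slow crossing, lying in a different inscribed triangle) gives a period-$3$ point in a small arc about $z_3$. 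Because $f^i$ approximates $g_r^i$ and $g_r(z_1)=z'_1$, $g_r^2(z_1)=z''_1$ (and similarly at $z_3$), the $f$-orbits of these two points lie in small arcs about $\{z_1,z'_1,z''_1\}$ and about $\{z_3,z'_3,z''_3\}$ respectively; these twelve points are distinct with fixed mutual distances, so for $X$ dense enough the two orbits are disjoint, hence distinct, and $\numorb\ge 2$.

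The main obstacle is step (b): one must make the estimate ``$f$ follows $g_r$ within $O(\epsilon)$'' precise and uniform in $x\in X$ --- this is exactly where the geometric lemmas of Section~\ref{sec:geometric-lemmas} and the hypothesis $r<2$ are needed --- and then carry the localization through three iterations of $f$ carefully enough to be certain that the period-$3$ orbits found near $z_1$ and near $z_3$ are genuinely different orbits rather than two points of the same orbit. Step (a) and the final appeal to Proposition~\ref{prop:vrchtpy} are routine once the machinery of Sections~\ref{sec:finite}--\ref{sec:finite-dynamics} is available.
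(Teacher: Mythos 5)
Your approach is essentially the same as the paper's: approximate the cyclic dynamics $f_3$ by $g_r^3$ uniformly for dense $X$, then locate a period-$3$ point of $f$ near each fast-to-slow transition of $g_r$ via a discrete intermediate value argument, and conclude there are at least two distinct orbits. The only real differences are cosmetic: the paper does not separately verify $\wf(\vr{X}{r})=\tfrac13$ (it falls out once a period-$3$ point is produced), it phrases the discrete IVT directly in terms of cyclic ordering rather than lifts to $\R$, and it obtains distinctness of the orbits by finding six periodic points in six pairwise disjoint arcs and applying pigeonhole, rather than by localizing two orbits near $\{z_1,z'_1,z''_1\}$ and $\{z_3,z'_3,z''_3\}$ as you do.
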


\begin{proof}
We cyclically order the elements of $X$ as $p_0\prec p_1 \prec \ldots \prec p_{n-1}\prec p_0$, where all arithmetic operations on vertex indices are performed modulo $n$. Recall that associated to cyclic graph $\vr{X}{r}$ we have maps $f_m\colon X\to S^1$ from Definition~\ref{def:gamma_m}. Since $f_m$ converges pointwise to $g_r^m$ as the density increases, for $X$ sufficiently dense the interval $(z_0,z_2)_Y$ contains points $p_i \in (z_0, z_1)_Y\cap X$, and $p_j \in (z_1, z_2)_Y\cap X$ such that 
\begin{equation}\label{eq:ordering}
p_i \prec f_3(p_i) \preceq f_3(p_j) \prec p_j \prec p_i.
\end{equation}
Now, suppose for a contradiction that there is no point $x\in [p_i,p_j]_Y\cap X$ with $x=f_3(x)$. Since $f_3$ weakly preserves the cyclic ordering we have $p_i \prec f_3(p_i) \preceq f_3(p_{i+1}) \preceq f_3(p_j) \prec p_j \prec p_i$, and since $p_{i+1}\neq f_3(p_{i+1})$ this gives
\[p_{i+1} \prec f_3(p_{i+1}) \preceq f_3(p_j) \prec p_j \prec p_{i+1}.\]
Note this is \eqref{eq:ordering} with $i$ replaced by $i+1$. Iterating this process gives $p_{j-1} \prec f_3(p_{j-1}) \preceq f_3(p_j) \prec p_j \prec p_{j-1}$, contradicting the fact that $(p_{j-1},p_j)_Y \cap X=\emptyset$. Hence there must be some $x\in [p_i,p_j]_Y \cap X$ with $x=f_3(x)$. It follows also that $\wf(\vr{X}{r})=\frac{1}{3}$.

Repeating this argument with $(z_0,z_2)_Y$ replaced by any of the intervals
\[ (z_0,z_2)_Y, (z_2,z'_0)_Y, (z'_0,z'_2)_Y, (z'_2,z''_0)_Y, (z''_0,z''_2)_Y, \mbox{ or } (z''_2,z_0)_Y \]
gives 6 periodic points, and hence at least two periodic orbits of length three. The statement about the homotopy type of $\vrc{X}{r}$ follows from Proposition~\ref{prop:vrchtpy}.
\end{proof}

\begin{theorem}\label{thm:exactly-m-orbits}
Let $Y$ be an ellipse of small eccentricity. For any $r_1 < r < r_2$, $\epsilon > 0$, and $k \geq 2$, there exists an $\epsilon$-dense finite subset $X \subseteq Y$ so that graph $\vr{X}{r}$ has exactly $k$ periodic orbits of length three, and hence $\vrc{X}{r}\simeq\bigvee^{k-1}S^2$.
\end{theorem}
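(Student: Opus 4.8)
The plan is to construct $X$ explicitly. Since an $\epsilon$-dense set is also $\epsilon'$-dense for $\epsilon'>\epsilon$, we may shrink $\epsilon$ at will, and in particular assume it small enough (depending on $k$, $Y$, $r$) that every $\epsilon$-dense finite $X$ is ``sufficiently dense'' in the sense of the preceding theorem, so $\wf(\vr{X}{r})=\tfrac13$ automatically. By Proposition~\ref{prop:vrchtpy} it then suffices to build an $\epsilon$-dense finite $X$ whose dynamics $f$ has exactly $k$ periodic orbits (necessarily of length $3$). First, a localization principle holds for any such $X$: (a) no periodic point lies in a slow region, since $x\in I_1\cup I_3$ gives $s(x)>r$, hence $g_r^3(x)\prec x$, and $f(v)\preceq g_r(v)$ for all $v$ forces $f^3(x)\preceq g_r^3(x)\prec x$; and (b) if $x$ is periodic and in $\overline{I_0}$, then $f(x)$ is periodic and $f(x)\preceq g_r(x)\in\overline{I_0}$, so, being within arc-distance $O(\epsilon)$ of $g_r(x)$ and lying in a fast arc, $f(x)\in\overline{I_0}$; iterating, the whole orbit lies in $\overline{I_0}$, and symmetrically for $\overline{I_2}$. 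Thus the orbit count splits as $\numorb=\numorb_0+\numorb_2$, where $\numorb_i$ counts the orbits inside $\overline{I_i}$, and the argument of the preceding theorem, run near the fast-to-slow boundary points $z_1$ and $z_3$, shows $\numorb_0,\numorb_2\ge1$.

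Fix $k\ge2$. Using continuity of $s$ (Lemma~\ref{lem:s-cont}) and $s(z_1)=r$, pick $q_1\prec\dots\prec q_{k-1}\prec z_1$ in $(z_0,z_1)_Y$, all close to $z_1$, so that the unique inscribed equilateral triangle $\Delta_t=\{q_t,\,g_{s(q_t)}(q_t),\,g_{s(q_t)}^2(q_t)\}$ through $q_t$ (Lemma~\ref{lem:unique-inscribed}) has side $s(q_t)<r$ with one vertex in each of the three fast arcs of $I_0$; a linearization of $s$ and of the maps $g_r$ at $z_1$ shows this holds throughout a fixed one-sided neighborhood of $z_1$, since there $s$ increases through $r$ and so the inscribed triangle rotates off $\{z_1,z_1',z_1''\}$ into the fast arcs as its side drops below $r$. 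The attendant ``overshoot arcs'' — e.g.\ $\big(g_{s(q_t)}(q_t),\,g_r(q_t)\big]_Y$, which lies inside the fast arc just counterclockwise of $z_1'$ — have length $O(r-s(q_t))\to 0$; taking $q_1$ close enough to $z_1$ and each gap $z_1-q_{t+1}$ much smaller than $z_1-q_t$, we arrange that all overshoot arcs have length $<\epsilon$ and that no vertex of any $\Delta_{t'}$ meets an overshoot arc of $\Delta_t$. Do the same with a single triangle $\Delta^*$ near $z_3$ in $(z_2,z_3)_Y$. Finally set $X=\bigcup_{t=1}^{k-1}\Delta_t\cup\Delta^*\cup X_{\mathrm{fill}}$, with $X_{\mathrm{fill}}$ a finite $\epsilon$-net for $Y$ that avoids every overshoot arc (possible, each being shorter than $\epsilon$), is $\delta$-dense with $\delta\ll\epsilon$ near each of the twelve points $z_i$ away from those arcs, and is generic elsewhere.

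For the lower bound each $\Delta_t$ is a genuine length-$3$ orbit of $f$: consecutive vertices are Euclidean distance $s(q_t)<r$ apart so the forward edges exist, and each vertex is the clockwise-most vertex of the relevant forward neighborhood because the corresponding overshoot arc is empty of $X$; hence $f$ cyclically permutes $\Delta_t$, giving $\numorb_0\ge k-1$, and likewise $\numorb_2\ge1$. For the upper bound, every periodic point lies in $\overline{I_0}\cup\overline{I_2}$ by (a)--(b); in the interior of a fast arc the continuous overshoot $g_r^3(p)-p$ exceeds $\epsilon$, so the $\le\epsilon$ gaps of $X$ cannot reverse it and $f^3(p)\succ p$, excluding periodic points there; and in the $O(\epsilon)$-neighborhoods of the $z_i$ the extra density of $X_{\mathrm{fill}}$ makes $f^3$ track $g_r^3$, whose sign pattern of $g_r^3(\cdot)-(\cdot)$ is forced — one orbit at each fast-to-slow boundary $z_1,z_3$ (necessarily one of our $\Delta_t,\Delta^*$) and none at the slow-to-fast boundaries $z_0,z_2$, with none created by $X_{\mathrm{fill}}$. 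Hence $\numorb_0=k-1$, $\numorb_2=1$, so $\numorb=k$ and $\vrc{X}{r}\simeq\bigvee^{k-1}S^2$ by Proposition~\ref{prop:vrchtpy}. The main obstacle is exactly this upper bound — the discrete greedy dynamics in the $O(\epsilon)$-neighborhoods of the twelve $z_i$, where the continuous overshoot/undershoot is comparable to the sampling scale. The clean claim to isolate and prove is: if $X$ is $\epsilon$-dense, is $\delta$-dense near each $z_i$ with $\delta\ll\epsilon$, and contains no point of the prescribed overshoot arcs, then the periodic points of $f$ inside each fast arc are exactly the prescribed triangle vertices, together with one at its $z_1$- or $z_3$-end.
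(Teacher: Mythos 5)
Your overall strategy---place explicit inscribed equilateral triangles as the desired periodic orbits, then fill to $\epsilon$-density while avoiding the ``overshoot arcs'' so the triangles survive as orbits---is the same high-level approach the paper takes, and your localization lemma (orbits live in $\overline{I_0}\cup\overline{I_2}$) is correct and genuinely useful. The lower bound is fine. But the argument has a real gap, and you have identified it yourself: you never prove the upper bound. You isolate the ``clean claim'' at the end but do not establish it, and the hand-wave preceding it is not sound. In particular, the assertion that ``in the interior of a fast arc the continuous overshoot $g_r^3(p)-p$ exceeds $\epsilon$'' is false: $g_r^3-\mathrm{id}$ vanishes at $z_1$ and at the triangle vertices $q_t$ you yourself inserted, so near each of these roots the overshoot is arbitrarily small compared to $\epsilon$, and the inequality $f^3(p)\succ p$ does not follow from density alone. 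A generic $\epsilon$-net in the remaining part of a fast arc can easily create an unwanted length-$3$ orbit: placing three fill points $u,u',u''$ so that each sits at the clockwise-most end of the forward neighborhood of the previous one produces exactly that. Avoiding the overshoot arcs of your chosen triangles $\Delta_t,\Delta^*$ does nothing to prevent the fill points from colluding among themselves.

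The paper closes this gap constructively rather than by a density estimate. It places the fill points one at a time, stepping inward from the innermost triangle vertex $p_1$ toward $z_0$ by a fixed step $\delta$ chosen small enough that a quantitative form of the overshoot inequality (their equation~\eqref{eq:g_r(q)}, namely $g_r(q)\in(p',g_r(p))_Y$ for $q$ just counterclockwise of $p$) holds at every step. That inequality guarantees $f(q_j)\neq q_j'$ because a previously placed point (e.g.\ $q_{j-1}'$ or $p_1'$) lies strictly between $q_j'$ and $g_r(q_j)$; hence the greedy map overshoots $q_j'$, the orbit through $q_j$ is not length $3$, and no new periodic orbit appears. The slow arcs $I_1,I_3$ are then filled arbitrarily, which is safe since slow points can never be periodic (your observation (a)). To repair your proof you would need to replace ``$X_{\mathrm{fill}}$ generic away from the overshoot arcs'' with a similarly sequenced, step-controlled construction of the fill points, together with the inductive verification that each newly added triple $\{q_j,q_j',q_j''\}$ fails the orbit condition. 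Without that, the claim $\numorb=k$ is unproved.

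One smaller point: your parenthetical ``one [orbit] at its $z_1$- or $z_3$-end'' is confusing; in the paper's construction (and presumably in yours) the boundary points $z_i$ are deliberately excluded from $X$ and there is no orbit located at a boundary. If you mean ``the triangle $\Delta_t$ nearest $z_1$,'' say that; if you mean an additional orbit beyond the $\Delta_t$ and $\Delta^*$, the count $\numorb_0=k-1$, $\numorb_2=1$ you state earlier in the same sentence contradicts it.
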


\begin{proof}
We begin with some preliminaries. Given $q,p\in Y$, we define $d^*(q,p)=\min\{d(q,p),d(q',p'),d(q'',p'')\}$. If $q,p \in Y$ are sufficiently close, then by the continuity of $p\mapsto g_{s(p)}(p)$ (Lemmas~\ref{lem:g_r-cont} and \ref{lem:s-cont}) we have that $d^*(q,p)<\epsilon$. Furthermore, recall the points $z_i,z'_i,z''_i$ in \eqref{eq-z} which depend on $r$. Fix a (necessarily fast) point $p \in (z_0, z_1)_Y$, which implies $p'\in(p,g_r(p))_Y$. By continuity of $g_r\colon Y\to Y$, for $q \in (z_0,p)_Y$ sufficiently close to $p$ we have
\begin{equation}\label{eq:g_r(q)}
g_r(q)\in(p',g_r(p))_Y.
\end{equation}

We will prove the theorem by showing that for any $n,n'\ge1$ with $n+n'=k$, one can produce an $\epsilon$-dense subset $X$ of $Y\setminus\{z_i,z'_i,z''_i~|~i=0,1,2,3\}$ such that $I_0\cap X$ contains $n$ periodic orbits, $I_2\cap X$ contains $n'$ periodic orbits, and $I_1\cap X$ and $I_3\cap X$ contain no periodic orbits for $\vr{X}{r}$.

Pick a point $p_1\in (z_0,z_1)_Y$ such that $d^*(p_1,z_1)<\epsilon$. Add the points $p_1, p'_1, p''_1$ to $X$; these points will form our first periodic orbit in $I_0\cap X$. To see this, note that since $p_1$ is a fast point, $s(p_1) < r$, giving $g_r(p_1)\in(g_{s(p_1)}(p_1),g_r(z_1))_Y=(p'_1,z'_1)_Y$. Hence if we don't add any points in the intervals $(p'_1, g_r(p_1)]_Y$, $(p''_1, g_r(p'_1)]_Y$, or $(p_1, g_r(p''_1)]_Y$, we'll have that $f_r(p_1) = p'_1$, $f_r(p'_1) = p''_1$, and $f_r(p''_1) = p_1$, i.e.\ a new periodic orbit of length 3. Next pick a point $p_2$ satisfying $p_2\in (g_r(p''_1),z_1)_Y$, $p_2'\in (g_r(p_1),z'_1)_Y$, and $p''_2\in (g_r(p_1'),z''_1)_Y$. Add the points $p_2$, $p'_2$, $p''_2$ to $X$ to form our second periodic orbit. Iterating, for $2\le i\le n$ we pick a point $p_i$ satisfying $p_i\in (g_r(p''_{i-1}),z_1)_Y$, $p'_i\in (g_r(p_{i-1}),z'_1)_Y$, and $p''_i\in (g_r(p'_{i-1}),z''_1)_Y$, and we add the points $p_i$, $p'_i$, $p''_i$ to $X$ to form our $i$-th periodic orbit. This creates $n$ periodic orbits in $I_0\cap X$.

We now create $\epsilon$-density in $I_0$. Let $u \in (z_0,z_1)_Y$ satisfy $d^*(z_0,u)<\epsilon$. By compactness of $[u,p_1]_Y\cup[u',p'_1]_Y\cup[u'',p''_1]_Y$, there exists some $\delta' > 0$ such that for all $p \in [u,p_1]_Y$ and $q \in (z_0,p)_Y$ with $d(q,p) < \delta'$, we have both $d^*(q,p)<\epsilon$ and \eqref{eq:g_r(q)}. Let $\delta  = \min (\delta', \epsilon) / 2$. Define $q_1\in(z_0,p_1)_Y$ to be the unique point with $d^*(q_1,p_1)=\delta$; add $q_1$, $q'_1$, $q''_1$ to $X$. Note that this doesn't create any periodic orbits since \eqref{eq:g_r(q)} implies $f_r(q_1)\neq q'_1$, $f_r(q'_1)\neq q''_1$, and $f_r(q''_1)\neq q_1$. Inductively, for $n>1$ define $q_n\in(z_0,q_{n-1})_Y$ to be the unique point with $d^*(q_n,q_{n-1})=\delta$. Iteratively add $q_n, q'_n, q''_n$ to $X$ until we obtain $q_n \in (z_0,u)_Y$, $q'_n \in (z'_0,u')_Y$, and $q''_n \in (z''_0,u'')_Y$ (these three events happen simultaneously, and at some finite stage since $\delta>0$). This does not create any additional periodic orbits, and we now have $\epsilon$-density in $I_0$.

Create $n'$ periodic orbits in an $\epsilon$-dense subset of $I_2$ via an analogous procedure.

Finally, we make $X$ $\epsilon$-dense by adding otherwise arbitrary points to $I_1$ and $I_3$. These new points cannot create any new periodic orbits for $f_r$ because $I_1$ and $I_3$ are sets of slow points for $g_r\colon Y\to Y$. This concludes the construction of an $\epsilon$-dense subset $X$ so that $\vr{X}{r}$ has exactly $k$ periodic orbits. The homotopy type of $\vrc{X}{r}$ follows from Proposition~\ref{prop:vrchtpy} since $\wf(\vr{X}{r})=\frac{1}{3}$.
\end{proof}

In the infinite cyclic graph $\vr{Y}{r}$, one can check that for $m\ge1$, the function $f_m\colon Y\to Y$ in Definition~\ref{def:gamma_m} is equal to $f_m=g_r^m$. Since $g_r$ is continuous by Lemma~\ref{lem:g_r-cont-p}, it follows that each $\gamma_m\colon Y\to\R$ is continuous. Hence $\vr{Y}{r}$ is a closed and continuous cyclic graph.

\begin{theorem}
\label{thm:ellipse-homotopy}
Let $Y$ be an ellipse of small eccentricity, and let $r_1=\frac{4\sqrt{3}a}{a^2+3}$ and $r_2=\frac{4\sqrt{3}a^2}{3a^2+1}$. Then
\[
\vrcless{Y}{r}\simeq \begin{cases}
S^1&\mbox{for }0<r\le r_1\\
S^2&\mbox{for }r_1<r\le r_2
\end{cases}
\ \mbox{and}\ 
\vrcleq{Y}{r}\simeq \begin{cases}
S^1&\mbox{for }0<r<r_1\\
S^2&\mbox{for }r=r_1\\
\bigvee^5 S^2&\mbox{for }r_1<r<r_2\\
\bigvee^3 S^2&\mbox{for }r=r_2.
\end{cases}
\]
Furthermore,
\begin{itemize}
\item For $0<r<\tilde{r}\le r_1$ or $r_1<r<\tilde{r}\le r_2$, inclusion $\vrcless{Y}{r}\hookrightarrow\vrcless{Y}{\tilde{r}}$ is a homotopy equivalence.
\item For $0<r<\tilde{r}< r_1$, inclusion  $\vrcleq{Y}{r}\hookrightarrow\vrcleq{Y}{\tilde{r}}$ is a homotopy equivalence.
\item For $r_1\le r<\tilde{r}\le r_2$, inclusion $\vrcleq{Y}{r}\hookrightarrow\vrcleq{Y}{\tilde{r}}$ induces a rank 1 map on 2-dimensional homology $H_2(-;\F)$ for any field $\F$.
\end{itemize}
\end{theorem}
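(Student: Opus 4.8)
\noindent\emph{Proof proposal.} The plan is to recognize $\vrless{Y}{r}$ and $\vrleq{Y}{r}$ as closed continuous cyclic graphs (for which $f_m=g_r^m$), compute their winding fractions, count their periodic orbits $\numorb$ and invariant sets of permanently fast points $F$, and feed everything into Theorems~\ref{thm:homotopy-generic} and~\ref{thm:homotopy-singular-II} and Proposition~\ref{prop:rank}.

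First, the winding fraction. Recall that $g_r\colon Y\to Y$ is an orientation-preserving homeomorphism depending continuously on $r$ (Lemma~\ref{lem:g_r-cont}), with $g_r\to\mathrm{id}$ as $r\to 0^+$ and $y\prec g_r(y)$ for every $y$; hence $g_r$ has no fixed point and has a well-defined Poincar\'e rotation number $\rho(r)$, which is continuous, nondecreasing, positive, and tends to $0$ as $r\to0^+$. By Lemma~\ref{lem:wf-as-limit} and $f_m=g_r^m$ we get $\wf(\vr{Y}{r})=\lim_m\gamma_m(v)/m=\rho(r)$ for either subscript. A period-three winding-one orbit of $g_r$ is exactly the vertex set of an inscribed equilateral triangle of side $r$, so $\rho(r)=\tfrac13$ precisely when such a triangle exists; by Theorem~\ref{thm:extrema-of-s} the continuous function $s$ has image $[r_1,r_2]$, so this happens exactly for $r\in[r_1,r_2]$, and since $\rho$ is nondecreasing we conclude $\wf(\vr{Y}{r})<\tfrac13$ for $0<r<r_1$ and $\wf(\vr{Y}{r})=\tfrac13$ for $r_1\le r\le r_2$.

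Next, the homotopy types. For $0<r<r_1$ the graph is generic with $l=0$ (as $0<\wf<\tfrac13=\tfrac{0+1}{2\cdot 0+3}$), so Theorem~\ref{thm:homotopy-generic} gives $\cl(\vr{Y}{r})\simeq S^1$; the same holds for $\vrless{Y}{r_1}$, which is also generic with $l=0$ because $\wf=\tfrac13$ but the supremum of Definition~\ref{def:windingfraction-inf} is not attained --- a finite $W$ with $\wf(\vrless{W}{r_1})=\tfrac13$ would contain a directed triple $p_1\diredge p_2\diredge p_3\diredge p_1$, and lifting to $\R$ together with $r_1\le s(p_1)$ yields the impossibility $\tilde p_1+1<\tilde g_{r_1}^{3}(\tilde p_1)\le\tilde g_{s(p_1)}^{3}(\tilde p_1)=\tilde p_1+1$. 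For $r_1<r\le r_2$ with subscript $<$, and $r_1\le r\le r_2$ with subscript $\le$, the graph is singular with $\wf=\tfrac13=\tfrac{l}{2l+1}$, $l=1$ (the supremum is attained by any inscribed equilateral triangle of appropriate side), so Theorem~\ref{thm:homotopy-singular-II} gives $\cl(\vr{Y}{r})\simeq\bigvee^{\numorb+F-1}S^2$. For subscript $<$ there are no periodic vertices (the suprema are never attained), so $\numorb=0$; the fast vertices are the two invariant sets $I_0,I_2$ of \eqref{eq-z} when $r_1<r<r_2$ (and two analogous ones about the maxima of $s$ when $r=r_2$), and they are \emph{permanently} fast since a fast orbit inside such an invariant set moves strictly clockwise toward an endpoint of its arc --- a fixed point of $g_r^3$ that it never reaches --- so $F=2$ and $\cl(\vrless{Y}{r})\simeq S^2$. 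For subscript $\le$ the periodic vertices are $s^{-1}(r)$, which by Lemma~\ref{lem:unique-inscribed} partitions into $|s^{-1}(r)|/3$ inscribed equilateral triangles; by Theorem~\ref{thm:extrema-of-s}, $|s^{-1}(r)|$ is $6,12,6$ for $r=r_1$, $r_1<r<r_2$, $r=r_2$, giving $\numorb=2,4,2$, while $F=0$ at $r=r_1$ (there $s\ge r_1$ makes every non-minimal vertex slow) and $F=2$ otherwise; substituting gives $S^2$, $\bigvee^5 S^2$, $\bigvee^3 S^2$ respectively.

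Finally, the inclusion statements. Given $r<\tilde r$ in a singular range, fix $W\in\tfin(\vr{Y}{r})$, enlarge it to $\widetilde W\in\tfin(\vr{Y}{\tilde r})$ with $W\subseteq\widetilde W$ (Lemma~\ref{lem:singular-wf-cofinal}), and use the commuting square whose vertical maps $\cl(\vr{W}{r})\hookrightarrow\cl(\vr{Y}{r})$ and $\cl(\vr{\widetilde W}{\tilde r})\hookrightarrow\cl(\vr{Y}{\tilde r})$ are homotopy equivalences (Lemma~\ref{lem:singular-wf} and cofinality): thus, on $H_2$, the inclusion $\cl(\vr{Y}{r})\hookrightarrow\cl(\vr{Y}{\tilde r})$ behaves like the inclusion $h\colon\vr{W}{r}\hookrightarrow\vr{\widetilde W}{\tilde r}$ of finite cyclic graphs. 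For $r_1<r<\tilde r\le r_2$ with subscript $<$, choose for $W$ the six vertices of two inscribed equilateral triangles of sides just above $r_1$, one inside $I_0$ and one inside $I_2$; the three ``long diagonals'' of the resulting hexagon have Euclidean length $\ge 2>r_2\ge\tilde r$, so $\vrless{W}{r}=\vrless{W}{\tilde r}=\cnk{6}{2}$ and $W$ lies in both $\tfin$'s, whence the top edge of the square is the identity and the inclusion is a homotopy equivalence. For the rank statement (subscript $\le$, $r_1\le r<\tilde r\le r_2$), Proposition~\ref{prop:rank} says $h$ induces a map of rank $\hit(h)-1$ on $H_2$, and one argues $\hit(h)=2$: every periodic orbit of $\vrleq{W}{r}$ has all its vertices in $\{s\le r\}$, hence all within one of the two invariant sets of permanently fast points of $\vrleq{Y}{\tilde r}$ --- the ``basins'' of $\Delta_{(a,0)}$ and of $\Delta_{(-a,0)}$, which persist through all of $[r_1,r_2]$ --- and within each basin these orbits are carried, under the dynamics of $\vrleq{\widetilde W}{\tilde r}$, onto the single periodic orbit $\tfin$ places there; so the rank is $1$.

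The work is concentrated in the last two steps: proving the fast vertices are \emph{permanently} fast, pinning down $F$ at the endpoints $r\in\{r_1,r_2\}$, and --- the main obstacle --- establishing the funnelling claim that makes $\hit(h)=2$, namely that the $\numorb+F$ periodic orbits of a finite $W\in\tfin(\vrleq{Y}{r})$ collapse, after a coarsening of scale, onto the two basins attached to the extremal triangles $\Delta_{(\pm a,0)}$.
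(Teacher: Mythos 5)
Your proposal is correct and follows essentially the same route as the paper: recognize $\vr{Y}{r}$ as a closed, continuous cyclic graph with $f_m=g_r^m$, compute $\wf$, count $\numorb$ and the invariant sets of permanently fast points, and feed these into Theorems~\ref{thm:homotopy-generic} and~\ref{thm:homotopy-singular-II} and Proposition~\ref{prop:rank} (via the $\tfin$-cofinality of Lemma~\ref{lem:singular-wf-cofinal}). The only cosmetic differences are your phrasing of the winding-fraction computation via the Poincar\'e rotation number of $g_r$ (the paper works directly through Lemma~\ref{lem:wf-as-limit}) and, for the subscript-$<$ inclusion, your clever choice of a single $W$ with $\vrless{W}{r}=\vrless{W}{\tilde{r}}=\cnk{6}{2}$, whereas the paper enlarges $W\subseteq\widetilde{W}$ and argues $\hit(\iota_W)=2$.
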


\begin{proof}
We first prove the homotopy types of $\vrcless{Y}{r}$. For $0<r<r_1$ (resp.\ $r=r_1$) we have $0<\wf(\vrless{Y}{r})<\frac{1}{3}$ (resp.\ $\wf(\vrless{Y}{r})=\frac{1}{3}$ but the supremum is not attained), hence $\vrcless{Y}{r}\simeq S^1$ by Theorem~\ref{thm:homotopy-generic}. For $0<r<\tilde{r}\le r_1$, Theorem~\ref{thm:homotopy-generic} also implies $\vrcless{Y}{r}\hookrightarrow\vrcless{Y}{\tilde{r}}$ is a homotopy equivalence.

Let $r_1<r\le r_2$. By Theorem~\ref{thm:extrema-of-s} and the intermediate value theorem there are cyclically ordered points
\begin{align*}
z_0(r)\prec z_1(r)\prec z_2(r)\prec z_3(r)&\prec z'_0(r)\prec z'_1(r)\prec z'_2(r)\prec z'_3(r)\\
&\prec z''_0(r)\prec z''_1(r)\prec z''_2(r)\prec z''_3(r)\prec z_0(r)
\end{align*}
in $Y$ such that
\begin{align*}
I_0(r)_<&=(z_0(r),z_1(r))_Y\cup(z'_0(r),z'_1(r))_Y\cup(z''_0(r),z''_1(r))_Y\quad\mbox{and}\\
I_2(r)_<&=(z_2(r),z_3(r))_Y\cup(z'_2(r),z'_3(r))_Y\cup(z''_2(r),z''_3(r))_Y
\end{align*}
are each invariant sets of permanently fast points for $\vrless{Y}{r}$, and
\begin{align*}
I_1(r)_<&=[z_1(r),z_2(r)]_Y\cup[z'_1(r),z'_2(r)]_Y\cup[z''_1(r),z''_2(r)]_Y\quad\mbox{and}\\
I_3(r)_<&=[z_3(r),z'_0(r)]_Y\cup[z'_3(r),z''_0(r)]_Y\cup[z''_3(r),z_0(r)]_Y
\end{align*}
are each sets of slow points for $\vrless{Y}{r}$. Hence $\wf(\vrless{Y}{r})=\frac{1}{3}$, the supremum is attained, and $\vrless{Y}{r}$ has $\numorb=0$ periodic orbits and $F=2$ invariant sets of permanently fast points. By Theorem~\ref{thm:homotopy-singular-II} we have $\vrcless{Y}{r}=\cl(\vrless{Y}{r})\simeq\bigvee^{\numorb+F-1}S^2=S^2$.

We now prove that for $r_1<r<\tilde{r}\le r_2$, the inclusion $\vrcless{Y}{r}\hookrightarrow\vrcless{Y}{\tilde{r}}$ is a homotopy equivalence. Fix a finite subset $W\in\tfin(\vrless{Y}{r})$ (notation defined in Section~\ref{ssec:cont-sing}); by Lemma~\ref{lem:singular-wf-cofinal} we can also choose some $\widetilde{W}\supseteq W$ with $\widetilde{W}\in\tfin(\vrless{Y}{\tilde{r}})$. We have the following commutative diagram.
\[ \xymatrix{
\vrcless{W}{r} \ar@{->}[r]^{\iota_r} \ar@{->}[d]^{\iota_W} & \vrcless{Y}{r} \ar@{->}[d]^{\iota_Y}\\
\vrcless{\widetilde{W}}{\tilde{r}} \ar@{->}[r]^{\iota_{\tilde{r}}} & \vrcless{Y}{\tilde{r}}
} \]
The horizontal inclusions $\iota_r$ and $\iota_{\tilde{r}}$ are homotopy equivalences by the proof of Theorem~\ref{thm:homotopy-singular-II}. Using the notation of Proposition~\ref{prop:rank}, we have $\hit(\iota_W)=2$ since the periodic orbit for $\vrless{W}{r}$ in $I_0(r)_<$ hits the periodic orbit for $\vrless{\widetilde{W}}{\tilde{r}}$ in $I_0(\tilde{r})_<$ (note $I_0(r)_<\subseteq I_0(\tilde{r})_<$), and similarly for the periodic orbits in $I_2(r)_<$ and $I_2(\tilde{r})_<$. It follows from Proposition~\ref{prop:rank} that $\iota_W$ is a homotopy equivalence, and hence $\iota_Y$ is a homotopy equivalence.

We next prove the homotopy types of $\vrcleq{Y}{r}$. For $0<r<r_1$ we have $0<\wf(\vrleq{Y}{r})<\frac{1}{3}$, and hence $\vrcleq{Y}{r}\simeq S^1$ by Theorem~\ref{thm:homotopy-generic}. For $0<r<\tilde{r}< r_1$, Theorem~\ref{thm:homotopy-generic} also implies $\vrcleq{Y}{r}\hookrightarrow\vrcleq{Y}{\tilde{r}}$ is a homotopy equivalence.

Note that the cyclic graph $\vrleq{Y}{r_1}$ has winding fraction $\frac{1}{3}$, $P=2$ periodic orbits given by the vertices of $\Delta_{(\pm a,0)}$, and no fast points. It follows from Theorem~\ref{thm:homotopy-singular-II} that $\vrcleq{Y}{r_1}\simeq S^2$.

Let $r_1 < r< r_2$. By Theorem~\ref{thm:extrema-of-s} and the intermediate value theorem the set $\{z_i(r),z'_i(r),z''_i(r)\}$ is a periodic orbit of $\vrleq{Y}{r}$ of length three for $i=0,1,2,3$,
\begin{align*}
I_0(r)_\leq&=(z_0(r),z_1(r))_Y\cup(z'_0(r),z'_1(r))_Y\cup(z''_0(r),z''_1(r))_Y\quad\mbox{and}\\
I_2(r)_\leq&=(z_2(r),z_3(r))_Y\cup(z'_2(r),z'_3(r))_Y\cup(z''_2(r),z''_3(r))_Y
\end{align*} are each invariant sets of permanently fast points for $\vrleq{Y}{r}$, and
\begin{align*}
I_1(r)_\leq&=(z_1(r),z_2(r))_Y\cup(z'_1(r),z'_2(r))_Y\cup(z''_1(r),z''_2(r))_Y\quad\mbox{and}\\
I_3(r)_\leq&=(z_3(r),z'_0(r))_Y\cup(z'_3(r),z''_0(r))_Y\cup(z''_3(r),z_0(r))_Y
\end{align*}
are each sets of slow points for $\vrleq{Y}{r}$. Hence $\wf(\vrleq{Y}{r})=\frac{1}{3}$, the supremum is attained, and $\vrleq{Y}{r}$ has $\numorb=4$ periodic orbits and $F=2$ invariant sets of permanently fast points. By Theorem~\ref{thm:homotopy-singular-II} we have $\vrcleq{Y}{r}=\cl(\vrleq{Y}{r})\simeq\bigvee^{\numorb+F-1}S^2=\bigvee^5 S^2$.

The cyclic graph $\vrleq{Y}{r_2}$ has $\wf(\vrleq{Y}{r_2})=\frac{1}{3}$ and $P=2$ periodic orbits given by the vertices of $\Delta_{(0,\pm 1)}$. The remaining points of $Y$ are partitioned into $F=2$ invariant sets of permanently fast points. It follows from Theorem~\ref{thm:homotopy-singular-II} that $\vrcleq{Y}{r_2}\simeq \bigvee^3 S^2$.

We now prove that for $r_1<r<\tilde{r}<r_2$, the inclusion $\vrcleq{Y}{r}\hookrightarrow\vrcleq{Y}{\tilde{r}}$ induces a rank 1 map on 2-dimensional homology $H_2(-;\F)$ for any field $F$. Fix a finite subset $W\in\tfin(\vrleq{Y}{r})$; by Lemma~\ref{lem:singular-wf-cofinal} we can also choose some $\widetilde{W}\supseteq W$ with $\widetilde{W}\in\tfin(\vrleq{Y}{\tilde{r}})$. We have the following commutative diagrams.
\[ 
\xymatrix{
\vrcleq{W}{r} \ar@{->}[r]^{\iota_r} \ar@{->}[d]^{\iota_W} & \vrcleq{Y}{r} \ar@{->}[d]^{\iota_Y}\\
\vrcleq{\widetilde{W}}{\tilde{r}} \ar@{->}[r]^{\iota_{\tilde{r}}} & \vrcleq{Y}{\tilde{r}}
}
\hspace{3mm} 
\xymatrix{
H_2(\vrcleq{W}{r};\F) \ar@{->}[r]^{\iota_r^*} \ar@{->}[d]^{\iota_W^*} & H_2(\vrcleq{Y}{r};\F) \ar@{->}[d]^{\iota_Y^*}\\
H_2(\vrcleq{\widetilde{W}}{\tilde{r}};\F) \ar@{->}[r]^{\iota_{\tilde{r}}^*} & H_2(\vrcleq{Y}{\tilde{r}};\F)
}
\]
The horizontal maps $\iota_r^*$ and $\iota_{\tilde{r}}^*$ are isomorphisms since the inclusions $\iota_r$ and $\iota_{\tilde{r}}$ are homotopy equivalences by the proof of Theorem~\ref{thm:homotopy-singular-II}. 

We next claim that $\hit(\iota_W)=2$. Note that $\vrleq{W}{r}$ has 6 periodic orbits: the 4 periodic orbits containing $z_0(r)$, $z_1(r)$, $z_2(r)$, $z_3(r)$, and the two periodic orbits in $I_0(r)_\leq$ and $I_2(r)_\leq$. The same is true for $\vrleq{\widetilde{W}}{\tilde{r}}$, with $r$ replaced everywhere by $\tilde{r}$. Note the periodic orbits in $W$ corresponding to $z_0(r)$, $z_1(r)$, and $I_0(r)_\leq$ map under $\iota_W$ to the periodic orbit corresponding to $I_0(\tilde{r})_\leq$ in $\widetilde{W}$, and the periodic orbits in $W$ corresponding to $z_2(r)$, $z_3(r)$, and $I_2(r)_\leq$ map under $\iota_W$ to the periodic orbit corresponding to $I_2(\tilde{r})_\leq$ in $\widetilde{W}$. So $\hit(\iota_W)=2$. Proposition~\ref{prop:rank} now implies $\rank(\iota_W^*)=\hit(\iota_W)-1=1$, and it follows that $\rank(\iota_Y^*)=1$. 

The proof for the case $r_1\le r<\tilde{r}\le r_2$ is analogous.
\end{proof}

Since Theorem~\ref{thm:ellipse-homotopy} gives the homotopy types of $\vrc{Y}{r}$ along with the behavior of inclusions $\vrc{Y}{r}\hookrightarrow\vrc{Y}{\tilde{r}}$, it also allows us to describe the \emph{persistent homology} of $\vrc{Y}{r}$ \cite{EdelsbrunnerHarer,ChazalDeSilvaOudot2013}. Our convention here is that we don't include every point on the diagonal with infinite multiplicity; this allows us to more easily describe the ephemeral summands \cite{ChazalCrawley-BoeveyDeSilva} in the persistent homology of $\vrcleq{Y}{r}$. The following corollary is visualized in Figure~\ref{fig:PH}.

\begin{corollary}\label{cor:ellipse-persistent-homology}
Let $Y$ be an ellipse of small eccentricity, and fix an arbitrary field of coefficients for homology. Then the 1-dimensional persistent homology of $\vrcless{Y}{r}$ (resp.\ $\vrcleq{S^1}{r}$) consists of a single interval $(0,r_1]$ (resp.\ $[0,r_1)$).

The 2-dimensional persistent homology of $\vrcless{Y}{r}$ consists of a single interval $(r_1,r_2]$. The 2-dimensional persistent homology of $\vrcleq{S^1}{r}$ consists of the interval $[r_1,r_2]$, as well as a point $[r,r]$ on the diagonal with multiplicity four for every $r_1<r<r_2$.
\end{corollary}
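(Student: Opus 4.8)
The plan is to extract the persistent homology directly from Theorem~\ref{thm:ellipse-homotopy}, which already records the homotopy type of $\vrc{Y}{r}$ at every $0<r\le r_2$ together with the ranks of all maps induced by the inclusions $\vrc{Y}{r}\hookrightarrow\vrc{Y}{\tilde r}$. Since $H_i(\vrc{Y}{r};\F)$ is finite-dimensional for every $r$, each persistence module $r\mapsto H_i(\vrc{Y}{r};\F)$ is pointwise finite-dimensional and therefore decomposes as a direct sum of interval modules; to accommodate the zero-length (ephemeral) summands I will work with persistence modules over $\R$ in the sense of \cite{ChazalCrawley-BoeveyDeSilva}, whose conventions also dictate whether the bars are open, closed, or half-open at their endpoints (the difference between the $<$ complex and the $\le$ complex accounting for the asymmetry $(0,r_1]$ versus $[0,r_1)$, and so on). Thus the only work is to identify, in each homological degree, the multiset of intervals, and this is forced by dimension counts together with the rank data in Theorem~\ref{thm:ellipse-homotopy}.

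Degree $1$ and degree $2$ for $\vrcless{Y}{-}$ are immediate. Theorem~\ref{thm:ellipse-homotopy} gives $H_1(\vrcless{Y}{r};\F)\cong\F$ for $0<r\le r_1$ with every inclusion-induced map in this range an isomorphism, and $H_1=0$ afterwards; the module is already an interval module, so its barcode is the single bar $(0,r_1]$, and for the $\le$ complex the same reasoning (with $H_1$ vanishing already at $r_1$, where $\vrcleq{Y}{r_1}\simeq S^2$) gives $[0,r_1)$. Likewise $H_2(\vrcless{Y}{r};\F)$ is $0$ on $(0,r_1]$ and is $\F$ with isomorphisms on $(r_1,r_2]$, an interval module with barcode $(r_1,r_2]$.

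The substantive case is $H_2$ of $\vrcleq{Y}{-}$. Here $H_2=0$ for $r<r_1$, so the module is supported on $[r_1,r_2]$, where Theorem~\ref{thm:ellipse-homotopy} tells us $\dim H_2$ equals $1$ at $r_1$, equals $5$ on $(r_1,r_2)$, equals $3$ at $r_2$, and the inclusion-induced map has rank exactly $1$ for all $r_1\le r<\tilde r\le r_2$. Decompose this module into interval modules. The rank-$1$ condition says that for any $r_1\le r<\tilde r\le r_2$ there is \emph{exactly one} interval summand containing both $r$ and $\tilde r$; applying this with $r=r_1$ and $\tilde r=r_2$ produces one summand containing both endpoints, and the same uniqueness then forbids any other interval from containing two comparable parameters of $[r_1,r_2]$, while $\dim H_2(r_1)=1$ forbids any interval supported only at $r_1$. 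Hence that distinguished summand is exactly the bar $[r_1,r_2]$, and every other interval meets $[r_1,r_2]$ in a single point. Comparing dimensions at each $r\in(r_1,r_2)$ then forces exactly four further intervals supported at the single point $r$, i.e.\ the multiplicity-four diagonal point $[r,r]$ for each $r_1<r<r_2$ (the remaining $3-1=2$ classes at $r_2$ are born at $r_2$, and fall outside the scale range being described, consistent with the stated convention of not recording the full diagonal).

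The main obstacle — really the only point beyond bookkeeping — is this last deduction: passing from ``every comparable pair of structure maps has rank exactly $1$'' to the precise barcode requires the refinement of the decomposition theorem that keeps track of zero-length summands (\cite{ChazalCrawley-BoeveyDeSilva}) rather than ordinary discrete persistence, and one must take care to distinguish the genuine ephemeral points from artifacts of truncating the parameter range at $r_2$.
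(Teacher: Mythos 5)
Your decomposition argument — treating each $H_i(\vrc{Y}{-};\F)$ as a pointwise finite-dimensional persistence module over $\R$, invoking \cite{ChazalCrawley-BoeveyDeSilva} to decompose into intervals, and reading off the intervals from the dimension and rank data in Theorem~\ref{thm:ellipse-homotopy} — is sound and in fact spells out the bookkeeping that the paper leaves implicit. The open/closed endpoint conventions and the multiplicity-four ephemeral summand on $(r_1,r_2)$ are handled correctly.

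However, there is a genuine gap. You treat Theorem~\ref{thm:ellipse-homotopy} as if it furnishes the entire persistence module, but that theorem only describes $\vrc{Y}{r}$ for $0<r\le r_2$. To conclude that the degree-2 bar actually terminates at $r_2$ (i.e.\ is $(r_1,r_2]$ rather than $(r_1,\rho]$ for some $\rho>r_2$, or an unbounded bar), and likewise that the degree-1 bar terminates at $r_1$ with nothing reappearing later, you must establish that $H_1$ and $H_2$ vanish for all $r>r_2$. This is precisely what the paper's proof is devoted to: it shows that for $r_2<r<2$ one has $\wf(\vr{Y}{r})>\tfrac{1}{3}$, by noting that $p\mapsto\gamma_3(p)-1$ is positive and hence bounded away from zero on the compact set $Y$, then applying Lemma~\ref{lem:wf-as-limit}. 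Once $\wf>\tfrac{1}{3}$, Theorems~\ref{thm:homotopy-generic} and~\ref{thm:homotopy-singular-I} force $\cl(\vr{Y}{r})$ to be homotopy equivalent either to an odd sphere $S^{2l+1}$ with $l\ge1$ or to a wedge of even spheres $S^{2l}$ with $l\ge2$, and in every such case $H_1=H_2=0$. Without some argument of this kind — and it cannot simply be read off from Theorem~\ref{thm:ellipse-homotopy} — your claimed right endpoints of the bars are unjustified.
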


\begin{proof}
It suffices to show that $\wf(\vr{Y}{r})>\frac{1}{3}$ for $r>r_2$. Indeed, Theorem~\ref{thm:ellipse-homotopy} then gives the correct persistent homology up until scale parameter $r_2$, and Theorems~\ref{thm:homotopy-generic} and \ref{thm:homotopy-singular-I} show that beyond scale $r_2$ there is no homology in dimensions 1 or 2.

Let $r_2<r<2$. Note that $\lfloor \gamma_3(p)\rfloor=1$ for all $p\in Y$. Furthermore, the function $Y\to\R$ defined by $p\mapsto \gamma_3(p)-1$ is positive, and hence bounded away from zero by the compactness of $Y$. Fix any point $p\in Y$; it follows there exists some sufficiently large integer $m$ with $\frac{\lfloor \gamma_m(p)\rfloor}{m}=:\omega>\frac{1}{3}$. Since $\vr{Y}{r}$ is cyclic we have $\frac{\lfloor \gamma_{im}(p)\rfloor}{im}\ge\omega$ for all $i$, and hence Lemma~\ref{lem:wf-as-limit} gives $\wf(\vr{Y}{r})\ge\omega>\frac{1}{3}$.
\end{proof}

\section{Conclusions}

Let $Y=\{(x,y)\in \R^2~|~(x/a)^2+y^2=1\}$ be an ellipse with $a>1$. When $Y$ has small eccentricity, meaning $a\le\sqrt{2}$, it remains to study the homotopy types of $\vrc{Y}{r}$ when $r>r_2=\frac{4\sqrt{3}a^2}{3a^2+1}$. In addition, the case when $Y$ is not of small eccentricity, meaning $a>\sqrt{2}$, also deserves study. In this latter case, it may be that $\vr{X}{r}$ is not a cyclic graph for $X\subseteq Y$, and therefore we will need to use alternative techniques.

\section{Acknowledgements}
We would like to thank the anonymous reviewers for their helpful suggestions, and we would like to thank Justin Curry, Chris Peterson, and Alexander Hulpke for helpful conversations. MA was supported by VILLUM FONDEN through the network for Experimental Mathematics in Number Theory, Operator Algebras, and Topology. HA was supported in part by Duke University and by the Institute for Mathematics and its Applications with funds provided by the National Science Foundation. The research of MA and HA was supported through the program ``Research in Pairs" by the Mathematisches Forschungsinstitut Oberwolfach in 2015. SR was supported by a PRUV Fellowship at Duke University.

\bibliographystyle{plain}
\bibliography{OnVietorisRipsComplexesOfEllipses}

\end{document}